\theoremstyle{plain}
\newtheorem{theorem}{Theorem}[section]
\newtheorem{proposition}[theorem]{Proposition}
\newtheorem{lemma}[theorem]{Lemma}
\theoremstyle{definition}
\newtheorem{fact}[theorem]{Fact}
\newtheorem{observation}[theorem]{Observation}
\newtheorem{conjecture}[theorem]{Conjecture}
\newcommand{\csim}{\stackrel{m}{\sim}}
\newcommand{\msim}{\stackrel{m}{\sim}}
\newcommand{\wsim}{\stackrel{w}{\sim}}
\newcommand{\sfsim}{\stackrel{sf}{\sim}}
\newcommand{\ftsim}{\stackrel{ft}{\sim}}
\newcommand{\he}{\text{ht}}
\DeclareMathOperator{\Av}{Av}
\DeclareMathOperator{\av}{av}
\begin{document}
\title[Avoiding in multisets]{Avoiding a pair of patterns in multisets and compositions}
\author[V. Jel\'\i nek]{V\'\i t Jel\'\i nek}
\address{Computer Science Institute, Charles University, Prague, Czechia}
\email{jelinek@iuuk.mff.cuni.cz}
\author[T. Mansour]{Toufik Mansour}
\address{Department of Mathematics, University of Haifa,
3498838 Haifa, Israel}
\email{tmansour@univ.haifa.ac.il}
\author[J. L. Ram\'{\i}rez]{Jos\'e L. Ram\'{\i}rez}
\address{Departamento de Matem\'aticas,  Universidad Nacional de Colombia,  Bogot\'a, Colombia}
\email{jlramirezr@unal.edu.co}
\author[M. Shattuck]{Mark Shattuck}
\address{Department of Mathematics, University of Tennessee,
37996 Knoxville, TN, USA}
\email{shattuck@math.utk.edu}
\thanks{V. Jel{\'\i}nek is supported by project 18-19158S of
the Czech Science
Foundation.}

\begin{abstract}
In this paper, we study the Wilf-type equivalence relations among multiset permutations. We identify
all multiset equivalences among pairs of patterns consisting of a
pattern of length three and another pattern of length at most four.  To establish our results, we
make use of a variety of techniques, including Ferrers-equivalence arguments, sorting by
minimal/maximal letters, analysis of active sites and direct bijections.  In several cases, our
arguments may be extended to prove multiset equivalences for infinite families of pattern pairs.
Our results apply equally well to the Wilf-type classification of compositions, and as a consequence, we
obtain a complete description of the Wilf-equivalence classes for pairs of patterns of type (3,3)
and (3,4) on compositions, with the possible exception of two classes of type (3,4).

\end{abstract}
\subjclass[2010]{05A05, 05A15}
\keywords{pattern avoidance, composition, multiset, Wilf-equivalence}

\date{\today}

\maketitle

\section{Introduction}

A \emph{multiset} is an unordered collection of elements which may be repeated. A \emph{multiset of
height $k$} is a multiset $S$ whose elements are positive integers and whose largest element is $k$. A finite
multiset may be represented by $S=1^{a_1}2^{a_2}\cdots k^{a_k}$, where $a_i\geq 0$ is the
\emph{multiplicity} of $i$ in $S$, i.e., the number of copies of $i$ in $S$. We call a multiset of
height $k$ \emph{reduced} if each member of $[k]=\{1,2,\ldots,k\}$ appears at least once in~$S$, or
equivalently, each multiplicity $a_i$ is at least 1. We will assume, unless otherwise noted, that the
multisets we work with are reduced. The \emph{size} of a multiset~$S$, denoted $|S|$, is the sum of
the multiplicities of its elements.

A \emph{multipermutation} of a multiset $S=1^{a_1}2^{a_2}\cdots k^{a_k}$ is an arrangement of the
elements of $S$ into a sequence. We identify such a multipermutation with a word
$\pi=\pi_1\pi_2\dotsb\pi_n$ that has exactly $a_i$ occurrences of each symbol~$i\in[k]$. The
\emph{height} of $\pi$, denoted $\he(\pi)$, is the height of the
underlying multiset, i.e., the maximum of $\pi_1,\dotsc,\pi_n$.

For two
multipermutations $\rho=\rho_1\dotsb\rho_\ell$ and $\pi=\pi_1\dotsb\pi_n$, we say that $\pi$
\emph{contains} $\rho$, if $\pi$ has a subsequence $\pi_{i(1)}\pi_{i(2)}\dotsb\pi_{i(\ell)}$ whose
elements have the same relative order as $\rho$, i.e., $\pi_{i(a)}<\pi_{i(b)}$ if and only if
$\rho_a<\rho_b$ and $\pi_{i(a)}>\pi_{i(b)}$ if and only if
$\rho_a>\rho_b$ for every $a,b\in[\ell]$. If $\pi$ does not contain $\rho$, it \emph{avoids}~$\rho$.
In this context, $\rho$ is usually referred to as a \emph{pattern}.

For a word $\pi=\pi_1\pi_2\dotsb\pi_n$ of height $k$, its \emph{reversal} is the word
$\pi^r=\pi_n\pi_{n-1}\dotsb\pi_1$, and its \emph{complement} is the word
$\pi^c=k+1-\pi_1,k+1-\pi_2,\dotsc, k+1-\pi_n$. Note that the reversal represents the same multiset as
$\pi$, while the complement may represent a different one.

For a multiset $S$ and a multipermutation $\rho$, we let $\Av(S;\rho)$ denote the set of all the
multipermutations of $S$ that avoid $\rho$, and we let $\av(S;\rho)$ be the cardinality of
$\Av(S;\rho)$. Two multipermutations $\rho$ and $\sigma$ are \emph{$m$-equivalent}, denoted by $\rho
\msim \sigma$, if for every multiset $S$, $\av(S; \rho)$ equals $\av(S;\sigma)$.

The notion of $m$-equivalence can be straightforwardly extended to sets of patterns. For instance,
suppose that $P$ is a set of multipermutations. We let $\Av(S;P)$ be the set of multipermutations
of $S$ that avoid all the patterns contained in $P$, and we let $\av(S;P)$ be its cardinality. We
again call two sets $P$ and $Q$ of multipermutations $m$-equivalent, denoted $P\msim Q$, if
$\av(S;P)=\av(S;Q)$ for every multiset~$S$. To avoid clutter, we often omit nested braces and write,
e.g., $\Av(S;\pi,\rho)$ instead of $\Av(S;\{\pi,\rho\})$.

We may easily observe that each multipermutation $\rho$ is $m$-equivalent to its reversal $\rho^r$,
and that for every pair $\rho$ and $\sigma$ of $m$-equivalent patterns, we also have
$\rho^r\msim\sigma^r$ and $\rho^c\msim\sigma^c$. Moreover, these symmetry relations can be
generalized in an obvious manner to equivalences involving sets of patterns.

The notion of $m$-equivalence has been previously studied by Jel\'\i nek and Mansour~\cite{JM1},
who called it `strong equivalence'. They focused on the classification of this equivalence for
single patterns of fixed size, and they characterized the $m$-equivalence classes of patterns of
size at most six. From their results, we will use here the following
fact~\cite[Lemma~2.4]{JM1}.

\begin{fact}[Jel\'\i nek and Mansour~\cite{JM1}]\label{fac-1223}
For any $k$, all the patterns that consist of a single symbol `1', a single
symbol `3' and $k-2$ symbols `2' are $m$-equivalent.
\end{fact}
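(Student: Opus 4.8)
The plan is to first pare the problem down with the elementary symmetries, then reduce everything to a single ``sliding'' $m$-equivalence, which is where the genuine difficulty sits. Let $\mathcal{F}_k$ denote the family in question. Since all the copies of the symbol $2$ in such a pattern are equal, and since the definition of containment forces equal pattern letters to be matched to equal word letters, a member of $\mathcal F_k$ is completely determined by the positions of its unique $1$ and its unique $3$. Writing the ``$1$ before $3$'' members in the form $2^a 1 2^b 3 2^c$ with $a+b+c=k-2$, I note that the reversal of $2^a 1 2^b 3 2^c$ is $2^c 3 2^b 1 2^a$, which is exactly the general ``$3$ before $1$'' member. Because $\rho\msim\rho^r$ always holds, it therefore suffices to prove that the patterns
\[
\rho_{a,b,c}:=2^a 1 2^b 3 2^c,\qquad a+b+c=k-2,
\]
are mutually $m$-equivalent; reversal then automatically folds the ``$3$ before $1$'' patterns into the same class.

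Next I would reduce this to one elementary move: the \emph{slide}, valid for $a\ge 1$,
\[
\rho_{a,b,c}\msim \rho_{a-1,b+1,c},
\]
which swaps the $1$ with the $2$ immediately to its left. Applying the (class-preserving) reversal--complement map, which sends $\rho_{a,b,c}$ to $\rho_{c,b,a}$, converts the slide into the companion move $\rho_{a,b,c}\msim\rho_{a,b+1,c-1}$ (for $c\ge 1$). The slide exchanges mass between the $a$- and $b$-blocks and the companion move between the $b$- and $c$-blocks, so together with their inverses they act transitively on the compositions $(a,b,c)$ of $k-2$. Hence establishing the slide alone places every $\rho_{a,b,c}$ in a single $m$-equivalence class, and the whole statement follows.

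To prove the slide I would pass to a level encoding. For a word $\pi$ and a value $v$, let $w^{(v)}(\pi)$ be the word over $\{L,M,H\}$ recording whether each letter of $\pi$ is $<v$, $=v$, or $>v$. Because the $2$'s of the pattern must be matched to equal letters, $\pi$ contains $\rho_{a,b,c}$ precisely when there is a value $v$ for which $w^{(v)}(\pi)$ contains $M^aLM^bHM^c$ as an ordinary subsequence, and the slide merely shifts one $M$ across the $L$. I would then aim to construct, for each multiset $S$, a bijection $\Av(S;\rho_{a,b,c})\to\Av(S;\rho_{a-1,b+1,c})$ that modifies $\pi$ only at a canonically determined site.

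The main obstacle is exactly this existential quantifier over $v$: avoidance is a conjunction over all thresholds, so a rule defined ``one value at a time'' need not be globally consistent, and a local swap that destroys a forbidden $M^aLM^bHM^c$ at one level may manufacture one at another. I would attack this in one of two ways. The first is to single out a canonical near-occurrence (for instance the one minimizing the position of the matched $L$, at the smallest eligible $v$) and verify that swapping the letter playing that $L$ with its left neighbour is an involution relating the two avoidance sets. The second, which I expect to be more robust, is induction on $\he(S)$: the top value of $S$ can only ever play the role of the $3$, so after checking the trivial base case $\he(S)\le 2$ (where no member of $\mathcal F_k$ is containable and $\av(S;\rho_{a,b,c})$ equals the total number of multipermutations of $S$), peeling off the copies of the top value expresses $\av(S;\rho_{a,b,c})$ as a sum, over $\rho$-avoiders $\pi'$ of the truncated multiset, of the number of safe insertions of the top value into $\pi'$. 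The crux then becomes showing that this insertion count is independent of the split $(a,b,c)$, which reduces to a sliding argument one height lower. Controlling that insertion count uniformly in $(a,b,c)$ is the step I expect to demand the most care.
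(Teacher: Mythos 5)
The paper does not actually prove this statement: it is imported verbatim as Lemma~2.4 of Jel\'\i nek and Mansour~\cite{JM1}, so there is no internal proof to compare against, and your proposal has to stand on its own. Its reductions are correct and economical: reversal folds the ``3 before 1'' patterns into the family $\rho_{a,b,c}=2^a12^b32^c$; reversal--complement sends $\rho_{a,b,c}$ to $\rho_{c,b,a}$, so the slide $\rho_{a,b,c}\msim\rho_{a-1,b+1,c}$ conjugates to $\rho_{a,b,c}\msim\rho_{a,b+1,c-1}$; and these two moves do act transitively on the compositions $(a,b,c)$ of $k-2$. The level-encoding characterization of containment and the observation that the maximum value of $S$ can only play the role of the `3' are likewise correct.

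The genuine gap is that the slide itself---which carries the entire combinatorial content of the statement---is never established. You name two strategies and carry out neither. The first (swap the letter playing the $L$ of a canonically chosen near-occurrence with its left neighbour) founders on exactly the obstacle you identify: avoidance is a conjunction over all thresholds $v$, so a local swap that destroys the canonical configuration at one level may create an occurrence at another, or change which configuration is canonical, and it is not shown that the map is well defined on avoiders, let alone an involution matching the two avoidance sets. The second (induction on $\he(S)$, peeling off the top value) is also not reduced to anything you prove: the number of safe insertions of the top-value letters depends on the individual avoider $\pi'$ of the truncated multiset, not just on the multiset, so ``the insertion count is independent of $(a,b,c)$'' is not even the statement you need---you would need a bijection between $\rho_{a,b,c}$-avoiders and $\rho_{a-1,b+1,c}$-avoiders of the truncated multiset that preserves the insertion count, and constructing such a refined bijection is essentially the original problem over again. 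As written, the argument is a plausible reduction whose crucial lemma is conjectured rather than proved; completing it requires reconstructing (or citing) the actual argument of~\cite{JM1}.
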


The main purpose of this paper is to classify the $m$-equivalence for sets of patterns containing a
pattern of size three and a pattern of size at most four. This extends earlier results concerning avoidance by multisets of a single permutation \cite{SavWilf} or word \cite{HMM} pattern of length three.  With the help of computer enumeration, we
identified the plausible $m$-equivalences and have managed to verify all of these $m$-equivalence.  This also yields all of the non-singleton (3,3) and (3,4) Wilf-equivalence classes for compositions, up to at most two sporadic cases. Many of our results are based on
arguments that generalize to larger patterns. However, to keep the presentation simple, we mostly
state our theorems and proofs for the special case of patterns of size up to four, which is our main
focus. We point out the possible generalizations separately as remarks.

\section{Avoidance results for multisets}

We may represent words of height $k$ and length $n$ as binary matrices with $k$ rows and $n$ columns
and exactly one $1$-cell in each column. We assume that the rows of a matrix are numbered
bottom-to-top, and the columns are numbered left-to-right. For a multipermutation $\sigma$
of height $k$, let $M(\sigma)$ be the $k\times n$ matrix with a 1-cell in row $i$ and column $j$ if
and only if the $j$-th letter of $\sigma$ is equal to $i$. For example,
$$M(31321)=\begin{bmatrix}
1& 0& 1& 0 & 0 \\
0& 0& 0& 1& 0 \\
0& 1& 0& 0 & 1 \\
\end{bmatrix}.$$
Conversely, if $M$ is a matrix with exactly one 1-cell in each column and at least one 1-cell in
each row, then there is a unique reduced multipermutation $\sigma$ such that $M=M(\sigma)$. If
there is no risk of confusion, we will identify a multipermutation $\sigma$ with its corresponding
matrix $M(\sigma)$, and we will say, for instance, that two matrices $M$ and $M'$ are
$m$-equivalent, if they represent two $m$-equivalent multipermutations.

The \emph{Ferrers diagram} (or \emph{Ferrers shape}) is an array of boxes (called cells) arranged
into down-justified columns, which have nonincreasing length. A \emph{filling} of a Ferrers diagram
is an assignment of zeros and ones into its cells. A filling is \emph{column-sparse} if every column
has at most one 1-cell. A filling is \emph{sparse} if every row and every column has at most one
1-cell. A \emph{transversal filling}, or a \emph{transversal}, is a filling in which every row and
every column has exactly one 1-cell. In this paper, we only deal with column-sparse fillings and
their various restrictions. We treat binary matrices, i.e. matrices containing only values 0 and 1,
as fillings of rectangular Ferrers diagrams.

The \emph{deletion of the $i$-th column} in a Ferrers diagram $F$ is the operation that removes from
$F$ all the boxes in the $i$-th column, and then shifts the boxes in columns $i+1, i+2,\dotsc$ one
unit to the left in order to fill the created gap. Note that the deletion transforms $F$ into a
smaller Ferrers diagram. Deletion of a row is defined analogously.

A filling $\phi$ of a Ferrers diagram $F$ \emph{contains} a binary matrix $M$ if $\phi$ can be
transformed into $M$ via a sequence of deletions of rows and columns, possibly followed by changing
some 1-cells of $\phi$ into 0-cells. If $\phi$ does not contain $M$, we say that $\phi$
\emph{avoids}~$M$. Notice that a
multipermutation $\sigma$ contains a multipermutation $\rho$ if and only if the matrix
$M(\sigma)$, understood as a Ferrers diagram, contains $M(\rho)$ in the sense defined above.

We say that two binary matrices $M_1$ and $M_2$ are \emph{strongly Ferrers-equivalent}, denoted
$M_1\sfsim M_2$, if for every Ferrers shape $F$, there is a bijection between $M_1$-avoiding and
$M_2$-avoiding column-sparse fillings of $F$ that preserves the number of 1-cells in each row and
column. We also say that $M_1$ and $M_2$ are \emph{Ferrers-equivalent for transversals}, or
\emph{FT-equivalent} for short, if for every Ferrers diagram $F$, the number of its $M_1$-avoiding
transversals is equal to the number of its $M_2$-avoiding transversals. This relation is denoted by
$M_1\ftsim M_2$.  For a pair of multipermutations $\sigma$ and $\rho$, we
will often write $\sigma\sfsim \rho$ or $\sigma\ftsim\rho$ for $M(\sigma)\sfsim M(\rho)$ and
$M(\sigma)\ftsim M(\rho)$, respectively. As with $m$-equivalence, we will also extend strong
Ferrers-equivalence and FT-equivalence from individual patterns to sets of patterns.

Clearly, if two patterns (or sets of patterns) $M_1$ and $M_2$ are strongly Ferrers-equivalent, then
they are also FT-equivalent. Moreover, as the next simple lemma shows, FT-equivalence can always be
extended from transversals to general sparse fillings, provided the two patterns have no zero rows or
columns.

\begin{lemma}\label{lem-sparse}
Suppose that $M_1$ and $M_2$ are FT-equivalent matrices, and that every row and column of $M_1$ and
of $M_2$ contains at least one 1-cell. Then for every Ferrers diagram $F$, there is a bijection
between $M_1$-avoiding and $M_2$-avoiding sparse fillings of $F$, which has the additional property
of preserving the number of 1-cells in each row and column of~$F$.
\end{lemma}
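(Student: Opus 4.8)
The plan is to reduce sparse fillings to transversals of sub-Ferrers-diagrams and then invoke the FT-equivalence hypothesis on each of these subdiagrams. First I would observe that deleting any set of rows and columns from a Ferrers diagram again produces a Ferrers diagram; this follows by iterating the single-row and single-column deletions already defined, each of which preserves the Ferrers property. Thus for any sparse filling $\phi$ of $F$, letting $R$ and $C$ denote the sets of rows and columns of $F$ that contain a 1-cell, the diagram $F' = F[R,C]$ obtained by deleting all empty rows and columns is again a Ferrers diagram. Since $\phi$ is sparse, $|R| = |C|$, and restricting $\phi$ to $F'$ yields a filling with exactly one 1-cell in each row and each column, i.e. a transversal of $F'$. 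This restriction is a bijection between the sparse fillings of $F$ whose nonempty rows and columns are exactly $R$ and $C$ and the transversals of $F'$, with inverse given by embedding a transversal of $F'$ back into $F$ (the retained cells of $F'$ correspond to the cells of $F$ in rows $R$ and columns $C$) and filling the remaining cells with zeros.

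The key step, where the hypothesis on $M_1,M_2$ enters, is to check that this restriction preserves avoidance of $M_i$ for $i\in\{1,2\}$. Consider a copy of $M_i$ inside $\phi$: it is obtained by deleting some rows and columns of $\phi$ to reach a matrix $N$ of the same dimensions as $M_i$ with $N\geq M_i$ entrywise. Because every row and every column of $M_i$ contains a 1-cell, every retained row and column of $N$ must contain a 1-cell of $\phi$; hence the copy uses only rows and columns lying in $R$ and $C$. Consequently such a copy survives the restriction to $F'$, so if $\phi$ contains $M_i$ then its associated transversal of $F'$ contains $M_i$. The converse direction is immediate, since the transversal is obtained from $\phi$ by deletions and a copy in the transversal is also a copy in $\phi$. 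Therefore the restriction bijection matches $M_i$-avoiding sparse fillings with prescribed $(R,C)$ to $M_i$-avoiding transversals of $F'$.

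Finally I would assemble the global bijection. Partition both the $M_1$-avoiding and the $M_2$-avoiding sparse fillings of $F$ according to the pair $(R,C)$ of their nonempty rows and columns. For each fixed $(R,C)$ with $|R|=|C|$, the two blocks are in bijection with the $M_1$-avoiding and the $M_2$-avoiding transversals of the Ferrers diagram $F'=F[R,C]$, and these two sets have equal cardinality by the FT-equivalence of $M_1$ and $M_2$ applied to $F'$; choosing any bijection between them and transporting it through the restriction maps gives a bijection on this block. Taking the union over all $(R,C)$ yields the desired bijection on all of $F$, and it preserves the number of 1-cells in each row and column automatically, since $(R,C)$ is fixed within each block, so a row (resp.\ column) carries exactly one 1-cell if it lies in $R$ (resp.\ $C$) and none otherwise, on both sides. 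The main obstacle is the avoidance-preservation argument of the second paragraph, which genuinely requires that $M_1$ and $M_2$ have no zero rows or columns; otherwise a copy of $M_i$ could occupy an empty row or column of $\phi$ and be destroyed under the restriction to $F'$.
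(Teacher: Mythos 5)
Your proposal is correct and follows essentially the same route as the paper: delete the empty rows and columns to reduce a sparse filling to a transversal of a smaller Ferrers diagram, apply the FT-equivalence bijection there, and reinsert the zero rows and columns, with the no-zero-row/column hypothesis on $M_1,M_2$ guaranteeing that avoidance is not disturbed. The only cosmetic difference is that you organize the argument as an explicit partition by the pair $(R,C)$ of nonempty rows and columns and state the avoidance-preservation in the contrapositive, whereas the paper phrases the same fact as ``inserting an all-zero row or column cannot create an occurrence of $M_2$.''
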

\begin{proof}
Let $F$ be a Ferrers shape, and let $\phi$ be an $M_1$-avoiding sparse filling of~$F$. We delete all
the rows and columns of $F$ that have no 1-cell in~$\phi$. This transforms the filling $\phi$ of the
diagram $F$ into an $M_1$-avoiding transversal $\phi'$ of a Ferrers diagram~$F'$. Since $M_1$ and
$M_2$ are FT-equivalent, there is a bijection $B$ that maps $M_1$-avoiding transversals of $F'$ into
$M_2$-avoiding transversals of~$F'$. We define $\psi':=B(\phi')$. We then reinsert the rows and
columns we deleted in the first step into $\psi'$, filling the newly inserted boxes by zeros. This
transforms $\psi'$ into a sparse filling $\psi$ of the original diagram~$F$. Since $M_2$ has a
1-cell in every row and column, the insertion of an all-zero row or column into $\psi'$ cannot
create an occurrence of~$M_2$. Thus, $\psi$ is an $M_2$-avoiding sparse filling of~$F$, and we may
easily observe that the transformation $\phi\mapsto\psi$ is the required bijection.
\end{proof}

Recall that a multipermutation $\sigma$ contains a multipermutation $\rho$ if and only if the matrix
$M(\sigma)$ contains $M(\rho)$. Thus, for a multiset
$S=1^{a_1}2^{a_2}\cdots k^{a_k}$ of size $n$ and a pattern $\rho$, there is a bijective
correspondence between the set $\Av(S;\rho)$ of $\rho$-avoiding multipermutations of $S$ and the set
of all the $M(\rho)$-avoiding matrices of shape $k\times n$ having exactly one 1-cell in each column
and exactly $a_i$ 1-cells in row $i$, for each $i\in[k]$. It follows that for two multipermutations
$\sigma$ and $\tau$, $\sigma\sfsim \tau$ implies $\sigma\msim\tau$.

If $\rho$ is a word and $k$ an integer, we denote by $\rho+k$ the word obtained by increasing
each letter of $\rho$ by~$k$. Recall that the height $\he(\rho)$ of a word $\rho$ is the maximum
value appearing in~$\rho$. For two words $\rho$ and $\tau$, we let their \emph{direct sum}
$\rho\oplus\tau$ be the concatenation of $\rho$ and $\tau+\he(\rho)$.
For instance, for $\rho=122$ and $\tau=312$, we have $\rho\oplus\tau=122534$. For a set of patterns
$P=\{\alpha^1, \alpha^2,\dotsc,\alpha^m\}$ and a pattern $\beta$, we write $P\oplus\beta$ as a
shorthand for the set $\{\alpha^1\oplus\beta, \alpha^2\oplus\beta,\dotsc,\alpha^m\oplus\beta\}$.

An important feature of the various flavors of Ferrers-equivalence is that they are closed with
respect to direct sums. This follows from a standard argument appearing, among others, in the works
of Backelin, West and Xin~\cite[Proposition 2.3]{bwx} or of Stankova and
West~\cite[Proposition~1]{SW} in the context of permutations, and later in the works of Jel{\'\i}nek
and Mansour~\cite[Lemma 2.1]{JM1} and~\cite[Lemma 14]{JM0} in the more general setting of words. We
omit repeating the argument here, and merely state the required result as a fact.

\begin{fact}[\cite{bwx,JM0,JM1,SW}]\label{fac-plus}
Let $P$ and $P'$ be two sets of multipermutations, and let $\rho$ be another multipermutation. If $P$
and $P'$ are strongly Ferrers-equivalent, then $P\oplus \rho$ and $P'\oplus \rho$ are also strongly
Ferrers-equivalent. Likewise, if $P$ and $P'$ are FT-equivalent, then $P\oplus \rho$ and $P'\oplus
\rho$ are also FT-equivalent.
\end{fact}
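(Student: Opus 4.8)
The plan is to give the standard direct-sum argument of Backelin--West--Xin type, adapted to column-sparse fillings, and to prove the $\sfsim$ statement in full; the $\ftsim$ statement then follows by running the same construction verbatim with \emph{transversal} in place of \emph{column-sparse filling}. Fix a Ferrers shape $F$, and recall that $M(\alpha\oplus\rho)$ is the block matrix with $M(\alpha)$ in its lower-left corner, $M(\rho)$ in its upper-right corner, and zeros elsewhere; consequently an occurrence of $\alpha\oplus\rho$ in a filling is a copy of $\alpha$ all of whose cells lie strictly to the south-west of all cells of an accompanying copy of $\rho$. The set version reduces to the structure of $\rho$ alone: a filling $\phi$ avoids $P\oplus\rho$ exactly when it avoids every $\alpha^i\oplus\rho$, and since $\rho$ is shared by all of these patterns, the decomposition of $F$ built below will depend only on the $\rho$-occurrences of $\phi$.

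The heart of the argument is a canonical splitting of $F$ driven by $\rho$. Given a $(P\oplus\rho)$-avoiding filling $\phi$, I would push a copy of $\rho$ as far to the north-east as possible and let $L\subseteq F$ be the set of cells from which a full copy of $\rho$ remains visible strictly to the north-east inside $\phi$. Since visibility of such a $\rho$ is preserved when a cell is moved weakly to the south-west, the region $L$ is closed under moving down and to the left; as $F$ is down-justified with non-increasing column heights, $L$ is then itself a Ferrers shape and its complement $U=F\setminus L$ is an up-and-right closed staircase. I would record three properties: (i) $L$, together with the restriction $\phi|_U$, is determined by the positions of the $\rho$-copies alone; (ii) $\phi|_L$ avoids $P$, since a copy of some $\alpha^i$ inside $L$ combined with the reserved $\rho$ in $U$ would assemble a forbidden $\alpha^i\oplus\rho$; and (iii) replacing $\phi|_L$ by any column-sparse filling of $L$ with the same row and column sums leaves $L$, $U$, and $\phi|_U$ unchanged, and creates no $\rho$-copy spilling into $U$.

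With the splitting in hand the bijection is immediate. As $P\sfsim P'$, there is a sum-preserving bijection $B_L$ between $P$-avoiding and $P'$-avoiding column-sparse fillings of the Ferrers shape $L$, and I would send $\phi$ to the filling agreeing with $\phi$ on $U$ and equal to $B_L(\phi|_L)$ on $L$. Property (iii) guarantees that the image has the same canonical region $L$ and the same reserved part $\phi|_U$, so the map is inverted by applying $B_L^{-1}$ on $L$; it is therefore a bijection and, by construction, preserves every row and column sum. Avoidance of $P'\oplus\rho$ follows exactly as in (ii) read with $P'$ in place of $P$. For the FT statement the identical construction applies with $B_L$ taken to be the FT-bijection on $L$; because a transversal carries a single 1-cell in each row and column, preservation of the sums ensures that re-gluing $B_L(\phi|_L)$ onto $\phi|_U$ again produces a transversal.

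The step I expect to be the main obstacle is making $L$ precise and establishing property (ii) at the boundary. The naive reading of \emph{a $\rho$ strictly to the north-east} interacts delicately with the bounding box of an $\alpha^i$-occurrence: the north-east corner of that box need not be a cell of $F$, and when it falls outside $F$ there is no room for a completing $\rho$, so the corresponding $\alpha^i$-copies are harmless and must not be counted as obstructions. Defining $L$ so that $\phi|_L$ is \emph{exactly} $P$-avoiding while keeping $L$ a Ferrers shape and preserving the invariance (iii) is the technical crux, and it is precisely this point that is settled in \cite{bwx,SW} for permutations and extended to words in \cite{JM0,JM1}; this is why the statement can be quoted here as a fact.
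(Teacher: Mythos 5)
The paper does not actually prove this statement: it is quoted as a known fact with the argument explicitly omitted in favour of citations, so the only meaningful comparison is with the standard direct-sum argument of Backelin--West--Xin and its extensions to words, and your sketch does follow that argument. The skeleton is right: colour a cell of $F$ \emph{white} if some copy of $\rho$ occurs strictly to its north-east, note that the white region $L$ is closed under moving down and left (hence is itself a Ferrers shape), that $\phi|_L$ must avoid $P$, and that $L$ and $\phi|_U$ are unchanged when $\phi|_L$ is replaced by another filling with the same row and column sums; then conjugate by the bijection on $L$.

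As a proof, however, your text leaves three gaps, two of which you could close with tools already in the paper. First, the ``technical crux'' you defer to the references --- that a copy of some $\alpha\in P$ contained in $L$ really extends to a copy of $\alpha\oplus\rho$ --- is resolved by the paper's own definition of containment in a filling of a Ferrers shape: containment is witnessed by a choice of rows and columns whose full rectangular intersection lies inside the shape, so the north-east corner of that rectangle is itself a cell of $L$, hence white, and a $\rho$-copy strictly north-east of that corner is strictly north-east of the entire $\alpha$-copy. There is no need to leave this open, and without it the map is not even well defined. Second, the invariance of $L$ asserted in your property (iii) needs an argument, since a priori a dark cell could become white after $\phi|_L$ is altered. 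The standard fix is to show that every white cell admits a witnessing $\rho$-copy consisting entirely of dark $1$-cells (replace any white $1$-cell of a witness by a witness lying strictly north-east of it and iterate); as the dark $1$-cells are untouched by the surgery, whiteness is neither destroyed nor created, which is what makes the map invertible. Third, the FT case is not ``verbatim'': if $\phi$ is a transversal of $F$, then $\phi|_L$ is only a \emph{sparse} filling of $L$, because rows and columns of $L$ may carry their unique $1$-cell in $U$ or meet $L$ in no $1$-cell at all. You therefore cannot apply the FT-bijection on $L$ directly; you need the reduction of Lemma~\ref{lem-sparse}, deleting the empty rows and columns of $L$, applying the transversal bijection to the resulting shape, and reinserting --- which is legitimate precisely because every pattern matrix $M(\alpha)$ has a $1$-cell in each row and column.
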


\subsection{Results based on Ferrers-equivalence arguments.}

We now state several known results on various forms of Ferrers-equivalence which will be useful for
our purposes. The first such result is the strong Ferrers-equivalence, for any $k$, of the
increasing pattern $12\dotsb k$ and the decreasing pattern $k(k-1)\dotsb 1$. This equivalence has
been established by Backelin et al.~\cite{bwx} for transversal fillings, and Krattenthaler~\cite{kra}
then obtained more general results which imply the strong Ferrers-equivalence of the two patterns.

\begin{fact}[Krattenthaler~\cite{kra}]\label{fac-diag} For any $k$, the pattern $12\dotsb k$ is
strongly Ferrers-equivalent to $k(k-1)\dotsb 1$.
\end{fact}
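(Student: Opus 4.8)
The plan is to recast the two patterns as diagonal matrices and then invoke the Fomin-style growth-diagram machinery that Krattenthaler developed for precisely this situation. First I would note that $M(12\cdots k)$ is the $k\times k$ matrix with $1$-cells at $(1,1),(2,2),\dots,(k,k)$ (rows numbered bottom-to-top), i.e.\ the increasing diagonal, while $M(k(k-1)\cdots1)$ has its $1$-cells at $(k,1),(k-1,2),\dots,(1,k)$, the decreasing diagonal. Consequently a column-sparse filling $\phi$ of a Ferrers shape $F$ contains $M(12\cdots k)$ exactly when it carries a strictly increasing chain of $k$ $1$-cells (cells whose row and column indices both strictly increase), and it contains $M(k(k-1)\cdots 1)$ exactly when it carries a strictly decreasing chain of length $k$. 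The goal therefore reduces to producing, for every $F$, a bijection on column-sparse fillings that preserves every individual row sum and column sum and sends the length of the longest increasing chain to the length of the longest decreasing chain.

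To construct this bijection I would attach to $F$ a growth diagram: place a partition at each lattice corner of the cell grid, starting from the empty partition along the lower-left boundary, and propagate partitions toward the upper right using the forward local rules on Young's lattice. By Greene's theorem, the partition read off at a corner records the extremal chain statistics of the sub-filling to its lower left, with the largest part measuring one of the longest increasing/decreasing chains and the number of parts measuring the other. The data we must preserve, namely the row sums and column sums of $\phi$, can be recovered from the way the partitions jump along the boundary of the diagram.

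The bijection is then obtained from the natural conjugation symmetry of this setup: one passes to the dual family of local rules (the RSK/dual-RSK duality) under which, at every corner, the roles of the largest part and the number of parts of the associated partition are interchanged. By Greene's theorem this swaps the length of the longest increasing chain with that of the longest decreasing chain, so it maps $12\cdots k$-avoiding fillings (longest increasing chain $<k$) to $k(k-1)\cdots 1$-avoiding fillings, and back. One then checks that this symmetry leaves the ambient shape $F$ fixed and returns every row sum and every column sum unchanged, which delivers the required sum-preserving bijection and hence $12\cdots k \sfsim k(k-1)\cdots 1$.

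The main obstacle lies entirely in the growth-diagram analysis, which is exactly what Krattenthaler's work provides. One must verify that the local rules are well defined over an arbitrary Ferrers (not merely rectangular) region and for column-sparse fillings, that the Greene-type invariants genuinely compute the longest \emph{strictly} increasing and decreasing chains in this generality, and—most delicately—that the duality preserves the row and column sums \emph{cell by cell} rather than merely in aggregate or up to exchanging the two sum vectors. It is precisely this last refinement that upgrades the transversal equality of Backelin--West--Xin to the strong Ferrers-equivalence asserted here, and checking it is the technical heart of the argument.
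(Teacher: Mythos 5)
The paper does not prove this statement at all: it is imported as a Fact, with the transversal case credited to Backelin--West--Xin~\cite{bwx} and the full strong Ferrers-equivalence to Krattenthaler~\cite{kra}. Your sketch is, in substance, an outline of the proof in that cited source -- growth diagrams on the Ferrers shape, Greene-type invariants identifying the chain statistics, and a conjugation/duality of the partition data that swaps the two statistics while fixing all row and column sums -- so you are taking the same route the literature does rather than a new one.

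One point in your reduction is genuinely wrong as stated and is worth fixing, because it is exactly the subtlety Krattenthaler's theorems are phrased to handle. With the paper's conventions (down-justified columns of nonincreasing length, rows numbered bottom-to-top), a strictly increasing chain of $1$-cells automatically yields an occurrence of $M(12\dotsb k)$, since the top-right corner of its bounding rectangle is itself a cell of the chain and hence lies in $F$. But for the decreasing pattern this fails: containment of $M(k(k-1)\dotsb 1)$ requires that all $k$ chosen rows and columns pairwise intersect inside $F$, i.e.\ that the \emph{bounding rectangle} of the decreasing chain be contained in $F$, and that rectangle's top-right corner is not a cell of the chain. So ``contains a strictly decreasing chain of length $k$'' is strictly weaker than ``contains $M(k(k-1)\dotsb 1)$'' on non-rectangular shapes, and the naive equidistribution of longest increasing versus longest decreasing chain is not the statement you need (nor the one Krattenthaler proves). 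His results are stated for decreasing chains subject to the bounding-rectangle condition, which is precisely what matches pattern containment; with that correction, and with the row/column-sum preservation you already flag as the technical heart, the argument does deliver $12\dotsb k\sfsim k(k-1)\dotsb 1$.
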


Another family of strongly Ferrers-equivalent patterns has been found by Jel\'\i nek and
Mansour~\cite[Lemma 39]{JM0}.

\begin{fact}[Jel\'\i nek and Mansour~\cite{JM0}]\label{fac-2p12q}
For any $i,j\ge 0$, the pattern $2^i12^j$ is strongly Ferrers-equivalent to $12^{i+j}$.
\end{fact}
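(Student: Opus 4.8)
The plan is to reduce the equivalence to a single elementary move and then handle that move by a local, marginal-preserving rearrangement of the filling. First I would reformulate avoidance in terms of column-sparse fillings. Since $M(2^i12^j)$ and $M(12^{i+j})$ each have exactly two rows, any occurrence of either pattern in a column-sparse filling $\phi$ of a Ferrers shape $F$ must use exactly two rows of $F$: a single 1-cell, in some row $r$ and column $c$, playing the role of the `1', together with a strictly higher row $r'>r$ supplying all the `2's (here I use that the pattern matrix has a 1 in every column, so column-sparseness forces each selected column to carry its 1 in precisely the required row). Thus $\phi$ contains $2^i12^j$ exactly when some 1-cell in row $r$, column $c$ admits a row $r'>r$ with at least $i$ ones strictly left of $c$ and at least $j$ ones strictly right of $c$; containment of $12^{i+j}$ is the same condition with `at least $i+j$ ones to the right'. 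This pairing of one distinguished 1-cell with one higher row is the structure I would exploit throughout.

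Next I would induct on $i$. The base case $i=0$ is the identity $2^012^j=12^{i+j}$, so it suffices to prove the one-step equivalence $2^i12^j\sfsim 2^{i-1}12^{j+1}$ for every $i\ge 1$; composing these moves slides the lone lower letter to the front and yields $2^i12^j\sfsim 12^{i+j}$. The point of this reduction is that $M(2^i12^j)$ and $M(2^{i-1}12^{j+1})$ differ in only two adjacent columns, in which the two 1-cells simply exchange their rows---an adjacent \emph{$21\leftrightarrow 12$} swap localized inside the block of `2's. This is exactly the type of elementary transformation treated by the Backelin--West--Xin and Stankova--West machinery, and the $2\times 2$ seed $21\sfsim 12$ is already supplied by Fact~\ref{fac-diag}.

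For the one-step bijection I would organize the construction around the pairing above: for each relevant higher row $r'$, define a rearrangement of the 1-cells in the two rows $\{r,r'\}$ that converts a configuration forbidden for $2^i12^j$ into one forbidden for $2^{i-1}12^{j+1}$---moving a 1-cell along a hook and compensating so that every row sum and column sum is unchanged. The main obstacle is \emph{global consistency}: a single higher row $r'$ interacts simultaneously with every 1-cell beneath it, so the per-pair adjustments cannot be made independently and must be amalgamated into one well-defined, invertible map on the entire filling. This is precisely where the Ferrers hypothesis is indispensable, since the nonincreasing column lengths restrict which cells of the two rows can coexist and hence let the local toggles be applied in a fixed canonical order without conflict. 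Verifying this coherence, and checking that the resulting map is a bijection preserving all row and column sums, is where the real work lies; the reformulation and the reduction to the adjacent swap are the routine parts.
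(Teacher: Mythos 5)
This statement is not proved in the paper at all: it is imported verbatim as \cite[Lemma~39]{JM0}, so there is no in-paper argument to measure your proposal against. Judged on its own terms, your write-up contains a genuine gap at its central step. The reduction to the chain of one-step equivalences $2^i12^j\sfsim 2^{i-1}12^{j+1}$ is a sensible strategy, but your justification of each step --- that the two patterns ``differ in an adjacent $21\leftrightarrow 12$ swap'' and that this is ``exactly the type of elementary transformation treated by the Backelin--West--Xin and Stankova--West machinery, with seed $21\sfsim 12$'' --- does not go through. The only propagation tool available (Fact~\ref{fac-plus}) transfers an equivalence $P\sfsim P'$ to $P\oplus\rho\sfsim P'\oplus\rho$, i.e.\ it requires the untouched part of the pattern to be a \emph{direct summand} consisting of strictly larger letters placed after the equivalent prefix. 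In $2^i12^j$ the two columns you want to swap sit in the middle of a block of letters all equal to $2$, at the same height as one of the swapped entries, so neither $2^i12^j$ nor $2^{i-1}12^{j+1}$ decomposes as $(21)\oplus\tau$ versus $(12)\oplus\tau$ or in any other way that Fact~\ref{fac-plus} can exploit. No cited result performs a swap embedded among equal letters, and your final paragraph explicitly defers the construction of the actual bijection (``verifying this coherence \dots is where the real work lies''), which is precisely the content of the lemma. So the heart of the proof is missing rather than merely routine.

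A secondary, fixable inaccuracy: your reformulation of containment (``a $1$-cell at $(r,c)$ together with a higher row $r'$ having at least $i$ ones to its left and $j$ ones to its right'') silently assumes that the bounding rectangle of the occurrence lies inside the Ferrers shape. For $j\ge 1$ this is automatic because the top-right corner of that rectangle carries a $1$-cell, but for $j=0$ (pattern $2^i1$) the corner is the cell $(r',c)$, which need not belong to $F$ even when row $r'$ has $i$ ones to the left of column $c$. Any correct bijection must respect this boundary effect, and it is one of the places where a na\"{\i}ve per-row toggle would fail to be well defined or to preserve the row and column sums.
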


The next result, due to Stankova and West~\cite{SW}, is specific to FT-equivalence.

\begin{fact}[Stankova and West~\cite{SW}]\label{fac-312}
The patterns $312$ and $231$ are FT-equivalent.
\end{fact}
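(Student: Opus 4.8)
The statement is precisely the shape-Wilf-equivalence of $231$ and $312$, so the task is to build, for every Ferrers diagram $F$, a bijection between its $231$-avoiding and its $312$-avoiding transversals. I would first stress why this cannot follow from the symmetries noted in the excerpt: among the dihedral symmetries of a square board, only the transpose sends a Ferrers shape to a Ferrers shape, and since $M(231)$ and $M(312)$ are transposes of one another, the transpose only relates the $231$-avoiding transversals of $F$ to the $312$-avoiding transversals of the conjugate shape $F^t$. A genuine, shape-internal bijection is therefore needed, and this is what makes the result nontrivial.

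Encode a transversal by its points $(j,r_j)$, where the $1$-cell of column $j$ lies in row $r_j$. The plan is an induction in which both classes receive a clean recursive decomposition obtained by peeling off an extreme point. For $231$ I would peel the global maximum (the $1$-cell in the top row), say in column $m$: avoiding $231$ is equivalent to requiring that every column to the right of $m$ lie strictly above every column to its left, with the left block and the right block each independently avoiding $231$. Dually, for $312$ I would peel the global minimum (the $1$-cell in the bottom row), say in column $m'$: avoiding $312$ is equivalent to requiring every column to the left of $m'$ to lie strictly below every column to its right, again with the two blocks independently avoiding $312$. In both cases the maximum (resp.\ minimum) can only play the role of the middle letter of the pattern, and a short check shows that no occurrence can straddle the peeled column without using it; hence each decomposition is a genuine independent split into two sub-transversals sitting one entirely above the other.

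With these two parallel decompositions in hand, I would define the bijection recursively, sending a $231$-avoider with left/right blocks $(L,R)$ to a $312$-avoider whose below/above blocks are the images of $L$ and $R$ under the induction hypothesis, choosing the split column of the image so that the reconstructed board is again $F$. The first thing to verify is that the sub-boards arising on each side really are Ferrers diagrams; this uses that the columns are down-justified and of nonincreasing length, so that deleting the extreme row and column and restricting to the rows below (resp.\ above) leaves a Ferrers shape on which the induction applies.

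The main obstacle is reconciling the two decompositions, which peel different rows and split the columns at different places, so that the sub-board shapes produced on the $231$-side and on the $312$-side a priori differ. The heart of the argument is to show that, for \emph{every} Ferrers shape and not merely for rectangles, the admissible split positions and the resulting pairs of sub-shapes correspond, so that the two recursions enumerate the same objects; equivalently, one must check that reassembling the transferred blocks never creates an occurrence of the target pattern and that the construction is invertible. Carrying out this case analysis across the three possible positions of a putative pattern relative to the split column, while tracking the Ferrers constraint at every level of the recursion, is the delicate step and constitutes the substance of the Stankova--West theorem.
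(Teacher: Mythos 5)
The paper offers no proof of this statement at all: it is quoted as Fact~\ref{fac-312}, a black-box citation of Stankova and West~\cite{SW}, so there is no internal argument to compare yours against. Judged on its own terms, your write-up is a proof plan rather than a proof, and the plan stalls exactly where the theorem lives. Your two decompositions are individually correct: in a $231$-avoiding transversal, the column $m$ carrying the top-row cell forces every column to its right to sit strictly above every column to its left, and dually for $312$ with the bottom-row cell. But the resulting sub-boards do not match. Taking $m'=m$, the $231$-side left block occupies columns $1,\dots,m-1$ and the bottom $m-1$ rows of $F$, while the $312$-side left block occupies the same columns but rows $2,\dots,m$; likewise the two right blocks differ according to whether they contain the top row of $F$ or the row just above the split. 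Since the rows of a Ferrers shape have different lengths, these are genuinely different shapes, so the induction hypothesis applied to one pair of sub-shapes says nothing about the other pair; nor is there any reason to expect the split column itself to be preserved, since the equality of counts need only hold in aggregate over all split positions rather than refined by the location of the extreme cell.

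You explicitly acknowledge this (``the heart of the argument is to show \dots\ the two recursions enumerate the same objects'') and then declare it to be ``the substance of the Stankova--West theorem'' without supplying it --- which is to concede the entire theorem. The known proofs do not proceed by this naive peeling: Stankova and West run a delicate induction organized around a critical splitting of the Young diagram, and the later bijective treatments are likewise far from a two-block recursion. So the proposal correctly identifies the statement and the obstruction, but contains no mechanism for overcoming it.
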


In the statement of Fact~\ref{fac-312}, FT-equivalence cannot be directly replaced with strong
Ferrers-equivalence, as was pointed out by Guo et al.~\cite{GKZ}. However, Guo et
al.~\cite{GKZ} have found a different way of generalizing Fact~\ref{fac-312} to a strong
Ferrers-equivalence result, which we now state.

\begin{fact}[Guo et al.~\cite{GKZ}] We have the following strong Ferrers-equivalences for sets of
patterns:
\begin{itemize}
\item $\{231,221\}\sfsim\{312,212\}$ and
\item $\{231,121\}\sfsim\{312,211\}$.
\end{itemize}
\end{fact}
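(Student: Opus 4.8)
The plan is to first reduce the assertion to a purely word-theoretic statement about content-preserving bijections. Fix a Ferrers shape $F$. In a column-sparse filling each column carries at most one $1$-cell, so any bijection preserving column sums must send empty columns to empty columns and singly-filled columns to singly-filled columns. Deleting the empty columns therefore reduces the problem, for each choice of the empty-column set, to fillings with exactly one $1$-cell per column. Recording for each such column the row of its $1$-cell turns the filling into a word $w=w_1\cdots w_m$, where the down-justified, nonincreasing-length columns of $F$ impose the staircase bound $1\le w_j\le \ell_j$ with $\ell_1\ge\ell_2\ge\cdots\ge\ell_m$. Under this dictionary, preserving the number of $1$-cells in each row is exactly preserving the content (the multiset of letters) of $w$, and a pattern occurrence in the filling is a pattern occurrence among the letters. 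Thus it suffices to produce, for every nonincreasing bound sequence $(\ell_j)$, a content-preserving bijection between the staircase words avoiding $\{231,221\}$ and those avoiding $\{312,212\}$, and likewise for $\{231,121\}$ and $\{312,211\}$. By Lemma~\ref{lem-sparse} this is a genuine strengthening of the transversal statement Fact~\ref{fac-312}.

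Next I would record the forbidden configurations explicitly. Writing a pattern occurrence as a triple of columns $j_1<j_2<j_3$ with letters $w_{j_1},w_{j_2},w_{j_3}$, avoidance of $\{231,221\}$ is the condition that no triple satisfies $w_{j_3}<w_{j_1}\le w_{j_2}$, while avoidance of $\{312,212\}$ forbids $w_{j_2}<w_{j_3}\le w_{j_1}$; for the second pair, $\{231,121\}$ forbids $w_{j_3}\le w_{j_1}<w_{j_2}$ and $\{312,211\}$ forbids $w_{j_2}\le w_{j_3}<w_{j_1}$. These characterizations exhibit the two classes in each pair as mirror images of one another, with the role of the attached equality swapped between the leading and trailing comparison.

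I would then exploit the resulting block decompositions. A word avoids $\{231,221\}$ if and only if it factors from the left as $w=a\,u\,v$, where $a=w_1$, every letter of $u$ is strictly below $a$, every letter of $v$ is at least $a$, and $u,v$ again avoid $\{231,221\}$: once a letter $\ge a$ appears after the first position, no smaller letter may follow, which forces the low letters into a block $u$ preceding the high block $v$. Symmetrically, a word avoids $\{312,212\}$ if and only if it factors from the right as $w=u'\,v'\,b$ with $b=w_m$, the letters of $u'$ below $b$ and those of $v'$ at least $b$. These mirrored factorizations suggest a recursive bijection: transform the blocks $u$ and $v$ of a $\{231,221\}$-avoider recursively and reassemble them, together with the distinguished letter, into the right-anchored block form of a $\{312,212\}$-avoider, keeping the content fixed. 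The second pair is treated identically, with the strict and weak inequalities interchanged throughout.

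The main obstacle I anticipate is reconciling this rearrangement with the staircase bound. Reflecting or regrouping blocks preserves content automatically, but the constraint $w_j\le\ell_j$ with $(\ell_j)$ nonincreasing is not symmetric under such moves, so the crux is to verify that the reassembled word still respects the available column heights and that the map is invertible within each fixed bound sequence. This is precisely where the second pattern of each pair is indispensable: the single patterns $231$ and $312$ are only FT-equivalent and not strongly Ferrers-equivalent (Fact~\ref{fac-312} and the remark following it), because without $221$, respectively $212$, the clean block decomposition breaks down as soon as a row is used more than once. Checking that the decomposition, and hence the bijection, survives the staircase bound for all $(\ell_j)$ is therefore the technical heart of the proof.
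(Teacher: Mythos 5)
First, a point of comparison: the paper does not prove this statement at all --- it is imported as a quoted Fact from Guo, Krattenthaler and Zhang~\cite{GKZ} --- so your argument has to stand on its own, and it does not. The fatal step is your dictionary between fillings and words. You assert that ``a pattern occurrence in the filling is a pattern occurrence among the letters,'' but containment in a Ferrers diagram, as defined in the paper via deletions of rows and columns, additionally requires the bounding rectangle of the occurrence to survive inside the shape: for an occurrence of $231$ or $221$ at columns $j_1<j_2<j_3$ one also needs $\ell_{j_3}\ge w_{j_2}$. Dropping this condition changes the avoidance class whenever the shape is not a rectangle, and it falsifies the block decomposition on which your entire bijection rests. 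Concretely, take column heights $(5,5,2)$ and the one-cell-per-column filling $w=(3,4,1)$: the unique candidate triple is a word-level occurrence of $231$, but the cell in row $4$ of column $3$ does not exist, so this filling avoids $\{231,221\}$ in the Ferrers sense while violating your factorization $w=a\,u\,v$ (a letter $\ge a$ is followed by a letter $<a$). This is not a peripheral technicality; the interaction between the pattern and the boundary of the shape is exactly what separates strong Ferrers-equivalence from plain $m$-equivalence, and it is the reason $231$ and $312$ alone are FT-equivalent (Fact~\ref{fac-312}) but not strongly Ferrers-equivalent.

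Second, even granting the reduction, you never construct the bijection. You correctly note that (word-level) $\{231,221\}$-avoiders decompose from the left and $\{312,212\}$-avoiders from the right, and you propose to transform blocks recursively and ``reassemble'' them, but you then explicitly defer the verification that the reassembled word respects the column bounds and is invertible, calling it ``the technical heart of the proof.'' That deferred step is the entire content of the theorem: a mirrored pair of decompositions does not by itself yield a row-sum- and column-sum-preserving bijection, and the naive reassembly generically moves letters into columns that are too short. What you have is an opening reduction built on an incorrect containment criterion, followed by a declaration of intent where the proof should be; the actual argument in~\cite{GKZ} is substantially more delicate for precisely these reasons.
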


There is another, simpler way to translate an arbitrary FT-equivalence result into a strong
Ferrers-equivalence, which involves the pattern $11$. Clearly, a multipermutation $\sigma$ avoids
$11$ if and only if each of its elements has multiplicity 1, i.e., $\sigma$ is actually a
permutation. Similarly, a column-sparse filling of a Ferrers diagram avoids $M(11)$ if and only if
each row has at most one 1-cell, that is, the filling is sparse.

A pair of patterns $\sigma$, $\rho$ is said to be \emph{Wilf-equivalent}, denoted $\sigma\wsim\rho$,
if for every $n$, the number of permutations of $[n]$ that avoid $\sigma$ is the same as the number
of those that avoid~$\rho$. Intuitively speaking, strong Ferrers-equivalence refines $m$-equivalence
in the same way as FT-equivalence refines Wilf-equivalence. This intuition is made more rigorous by
the next easy observation, which can be easily deduced from the definitions and from
Lemma~\ref{lem-sparse}. We omit its proof.

\begin{observation}\label{obs-11}
For any two multipermutations $\sigma$ and $\rho$, if $\sigma\wsim\rho$ then
$\{\sigma,11\}\msim\{\rho,11\}$, and if $\sigma\ftsim\rho$ then $\{\sigma,11\}\sfsim\{\rho,11\}$.
\end{observation}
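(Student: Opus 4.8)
The plan is to exploit the single feature that forbidding the pattern $11$ collapses multipermutations to genuine permutations, and column-sparse fillings to sparse ones; each of the two implications then reduces to its hypothesis ($\sigma\wsim\rho$, respectively $\sigma\ftsim\rho$) almost mechanically, with the real content in the second part already packaged in Lemma~\ref{lem-sparse}.

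For the first implication I would argue that a multipermutation of a reduced multiset $S$ avoids $11$ precisely when every element of $S$ has multiplicity one. Hence if some multiplicity of $S$ is at least $2$, then $\Av(S;\sigma,11)=\Av(S;\rho,11)=\emptyset$ and the two counts agree trivially. The only remaining multisets are $S=[k]=12\dotsb k$ for some $k$, whose multipermutations are exactly the permutations of $[k]$, all of which automatically avoid $11$. Thus $\av([k];\sigma,11)$ is just the number of permutations of $[k]$ that avoid $\sigma$, and likewise for $\rho$; by the definition of Wilf-equivalence these two numbers coincide for every $k$ whenever $\sigma\wsim\rho$, giving $\{\sigma,11\}\msim\{\rho,11\}$. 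This step is insensitive to whether $\sigma$ or $\rho$ is itself a permutation: if $\sigma$ has a repeated letter then no permutation can contain it, so every permutation of $[k]$ is counted, and $\sigma\wsim\rho$ forces the identical behaviour for $\rho$.

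For the second implication I would first rephrase the objects. As noted just before the statement, a column-sparse filling of a Ferrers shape $F$ avoids $M(11)$ if and only if it is sparse. Consequently the set of $\{\sigma,11\}$-avoiding column-sparse fillings of $F$ is exactly the set of $M(\sigma)$-avoiding sparse fillings of $F$, and similarly for $\rho$. So proving $\{\sigma,11\}\sfsim\{\rho,11\}$ amounts to producing, for every $F$, a row- and column-sum preserving bijection between $M(\sigma)$-avoiding and $M(\rho)$-avoiding sparse fillings of~$F$. This is precisely the conclusion of Lemma~\ref{lem-sparse}, whose hypotheses I would now verify: $\sigma\ftsim\rho$ means $M(\sigma)\ftsim M(\rho)$, and because $\sigma$ and $\rho$ are reduced multipermutations, each row of $M(\sigma)$ and $M(\rho)$ carries at least one $1$-cell (reducedness) while each column carries exactly one (the value at that position). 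Thus Lemma~\ref{lem-sparse} applies verbatim and delivers the required bijection.

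As for difficulty, there is no genuine obstacle: the whole argument is a bookkeeping reduction, and the substantive content---turning an FT-equivalence into a sparse-filling bijection---has already been isolated in Lemma~\ref{lem-sparse}. The only points demanding a moment's care are the translation ``avoids $11\Leftrightarrow$ sparse/permutation'', the check that reduced patterns have no empty rows or columns so that Lemma~\ref{lem-sparse} is legitimately applicable, and the minor edge case in the first part where $\sigma$ or $\rho$ may fail to be a permutation. This is exactly why it is safe to omit the proof.
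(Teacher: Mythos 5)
Your argument is correct and is precisely the one the paper intends when it says the observation ``can be easily deduced from the definitions and from Lemma~\ref{lem-sparse}'': the first implication reduces to counting permutations of $[k]$ (with the empty case when some multiplicity exceeds one), and the second is a direct application of Lemma~\ref{lem-sparse} after translating ``avoids $11$'' into sparseness. Your check that reducedness guarantees no empty rows or columns, so that the lemma applies, is exactly the right point of care.
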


A similar observation states that $m$-equivalence, as well as strong Ferrers-equivalence, is
preserved when we add a pattern $1^r$ for any $r\ge 2$.

\begin{observation}\label{obs-1r} For any two patterns $\tau$ and $\tau'$ and for any $k\ge 2$,
$\tau\msim\tau'$
implies  $\{1^k,\tau\}\msim\{1^k,\tau'\}$, and $\tau\sfsim\tau'$
implies  $\{1^k,\tau\}\sfsim\{1^k,\tau'\}$.
\end{observation}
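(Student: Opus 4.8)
The plan is to exploit the fact that avoiding $1^k$ is nothing more than a ceiling on multiplicities, equivalently on row sums, and that such a ceiling is automatically respected by any equivalence that preserves row sums (and is vacuous on all but a degenerate class of inputs in the $m$-equivalence case). First I would record the elementary characterization that underlies everything: a multipermutation avoids $1^k$ if and only if every symbol has multiplicity at most $k-1$, and, in matrix language, a column-sparse filling of a Ferrers shape avoids $M(1^k)$ if and only if every row contains at most $k-1$ 1-cells.

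For the $m$-equivalence statement, I would fix a multiset $S=1^{a_1}\cdots \ell^{a_\ell}$ and split into two cases according to its multiplicities. If some $a_i\ge k$, then every multipermutation of $S$ already contains $1^k$, so $\Av(S;\{1^k,\tau\})$ and $\Av(S;\{1^k,\tau'\})$ are both empty and the two counts agree trivially. If instead all $a_i\le k-1$, then no multipermutation of $S$ can contain $1^k$, so the pattern $1^k$ imposes no restriction; hence $\Av(S;\{1^k,\tau\})=\Av(S;\tau)$ and $\Av(S;\{1^k,\tau'\})=\Av(S;\tau')$, and these are equinumerous by the hypothesis $\tau\msim\tau'$. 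Since $\av(S;\{1^k,\tau\})=\av(S;\{1^k,\tau'\})$ for every $S$, we conclude $\{1^k,\tau\}\msim\{1^k,\tau'\}$.

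For the strong Ferrers-equivalence statement, I would fix a Ferrers shape $F$ and invoke the bijection $B$ witnessing $\tau\sfsim\tau'$, which sends $M(\tau)$-avoiding column-sparse fillings of $F$ to $M(\tau')$-avoiding column-sparse fillings of $F$ while preserving the number of 1-cells in each row and each column. A filling avoids $\{1^k,\tau\}$ precisely when it avoids $M(\tau)$ and, in addition, every row has at most $k-1$ 1-cells. Because $B$ preserves row sums, it carries any filling meeting the latter bound to one meeting the same bound, and the identical statement applies to $B^{-1}$. Thus $B$ restricts to a bijection between $\{1^k,\tau\}$-avoiding and $\{1^k,\tau'\}$-avoiding column-sparse fillings of $F$, still preserving all row and column sums, which is exactly what the definition of strong Ferrers-equivalence demands.

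This argument presents no genuine obstacle: the single point that must be verified is that both $B$ and its inverse respect the per-row bound of $k-1$, and this is immediate from row-sum preservation. I would only flag the mild subtlety that the reasoning relies in spirit on $k\ge 2$ (for $k=1$ the pattern $1^1$ forbids every nonempty filling), but since the hypotheses already stipulate $k\ge 2$, no separate case is required.
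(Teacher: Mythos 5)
Your proof is correct and complete; the paper states this observation without proof (treating it, like Observation~\ref{obs-11}, as immediate from the definitions), and your argument --- the multiplicity/row-sum characterization of $1^k$-avoidance, the two-case analysis for $m$-equivalence, and the restriction of the row-sum-preserving bijection for strong Ferrers-equivalence --- is exactly the intended one.
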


By combining the previous facts and observations, we obtain the following equivalences among pairs
involving a pattern of size 3 and a pattern of size~4.

\begin{proposition}\label{pro-ferrers} The following equivalences hold:
\begin{enumerate}[(a)]
\item $\{122,1111\}\msim\{212,1111\}$,
\item
$\{111,1223\}\msim\{111,1232\}\msim\{111,1322\}\msim\{111,2123\}\msim\{111,2132\}\msim\{111,2213\}$,
\item $\{111,1233\}\msim\{111,2133\}$,
\item
$\{111,1234\}\msim\{111,1243\}\msim\{111,1432\}\msim\{111,2134\}\msim\{111,2143\}\msim\{111,3214\}$,
\item
$\{123,1111\}\msim\{132,1111\}\msim\{213,1111\}$,
\item $\{123,1112\}\msim\{213,1112\}$,
\item $\{112,1234\}\msim\{112,2134\}\msim\{112,3214\}$,
\item $\{112,2314\}\msim\{112,3124\}$.
\end{enumerate}
\end{proposition}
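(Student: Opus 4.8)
The plan is to derive every equivalence in the proposition from a handful of base Ferrers-equivalences recalled above --- namely $12\sfsim21$ and $k(k-1)\cdots1\sfsim12\cdots k$ (Fact~\ref{fac-diag}), $2^i12^j\sfsim12^{i+j}$ (Fact~\ref{fac-2p12q}), and $312\ftsim231$ (Fact~\ref{fac-312}), together with Fact~\ref{fac-1223} for item~(b) --- by propagating them through two mechanisms: the direct-sum closure of Ferrers-equivalence (Fact~\ref{fac-plus}) and the observations that allow one to adjoin an all-equal pattern $1^k$ (Observations~\ref{obs-11} and~\ref{obs-1r}). Since strong Ferrers-equivalence implies $m$-equivalence, it suffices in each item to establish strong Ferrers-equivalence (or, for item~(h), FT-equivalence) of the relevant ingredients and then read off the stated $m$-equivalence.

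The majority of the items fall out directly. For~(a), Fact~\ref{fac-2p12q} gives $212=2^112^1\sfsim12^2=122$, and adjoining $1^4$ via Observation~\ref{obs-1r} yields $\{122,1111\}\msim\{212,1111\}$; item~(b) is just Fact~\ref{fac-1223} (each listed word has a single $1$, a single $3$ and two $2$'s) with $111$ adjoined; and for~(c) I would write $1233=12\oplus11$ and $2133=21\oplus11$, so that $12\sfsim21$ and Fact~\ref{fac-plus} give $1233\sfsim2133$, again finishing with Observation~\ref{obs-1r}. The items whose second pattern is not all-equal are handled by the set-valued form of Fact~\ref{fac-plus} applied to the factorization $112=11\oplus1$. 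For~(g), from $123\sfsim213\sfsim321$ (the first by appending $1$ to $12\sfsim21$, the second by Fact~\ref{fac-diag}) and Observation~\ref{obs-1r} with $k=2$ one gets $\{11,123\}\sfsim\{11,213\}\sfsim\{11,321\}$, and appending $1$ through Fact~\ref{fac-plus} turns these into $\{112,1234\}\sfsim\{112,2134\}\sfsim\{112,3214\}$. Item~(h) is the same maneuver built on Fact~\ref{fac-312}: since $231\ftsim312$, Observation~\ref{obs-11} gives $\{11,231\}\sfsim\{11,312\}$, and appending $1$ yields $\{112,2314\}\sfsim\{112,3124\}$. Item~(f) comes from $\{12,111\}\sfsim\{21,111\}$ (Observation~\ref{obs-1r}) appended with $1$, giving $\{123,1112\}\sfsim\{213,1112\}$.

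The heart of the matter is items~(d) and~(e), where the $m$-equivalence is obtained by showing that the six length-four permutations (respectively the three length-three permutations) are pairwise strongly Ferrers-equivalent and then adjoining $1^4$ by Observation~\ref{obs-1r}. Writing each as a direct sum --- $1234=12\oplus12$, $2134=21\oplus12$, $1243=12\oplus21$, $2143=21\oplus21$, $3214=321\oplus1$, $1432=1\oplus321$, and $123=1\oplus12$, $132=1\oplus21$, $213=21\oplus1$ --- every required link is an instance of $12\sfsim21$ or $123\sfsim321$ propagated through a direct sum, so in principle the whole set collapses into a single strong-Ferrers class.

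The main obstacle is that some of these links must modify the \emph{right} summand rather than the left. Appending a fixed block on the right, as in the literal statement of Fact~\ref{fac-plus}, only lets us vary the left summand, and this is genuinely insufficient here: the word $1432=1\oplus321$ has $\{1\}$ as its only nontrivial initial block, so connecting it to $1234=1\oplus123$ \emph{forces} one to replace $321$ by $123$ on the right, and likewise $132=1\oplus21$ can be reached from $123=1\oplus12$ only in this way. Thus the proof hinges on the two-sided form of the direct-sum closure --- that $C\sfsim C'$ implies $\rho\oplus C\sfsim\rho\oplus C'$ as well as $C\oplus\rho\sfsim C'\oplus\rho$ --- which is exactly the symmetric statement underlying the references cited for Fact~\ref{fac-plus}. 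Granting it, $1\oplus321\sfsim1\oplus123$ and $1\oplus21\sfsim1\oplus12$ close the remaining gaps, all six (respectively three) permutations merge into one strong-Ferrers class, and adjoining $1^4$ completes~(d) and~(e).
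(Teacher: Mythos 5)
Your handling of parts (a), (b), (c), (f), (g) and (h) is correct and is essentially the paper's own argument: the base equivalences from Facts~\ref{fac-1223}, \ref{fac-diag}, \ref{fac-2p12q} and \ref{fac-312} are adjoined with an all-equal pattern via Observations~\ref{obs-11} and~\ref{obs-1r} and then pushed through the (left-summand) direct-sum closure of Fact~\ref{fac-plus}.

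Parts (d) and (e), however, rest on a false lemma. The ``two-sided form of the direct-sum closure'' --- that $C\sfsim C'$ would imply $\rho\oplus C\sfsim\rho\oplus C'$ --- is not the symmetric statement underlying the cited references, and it is in fact false. The one-sided version holds because, for a filling of a Ferrers shape $F$, the set of cells whose strictly-upper-right region contains an occurrence of the fixed summand is again a Ferrers subshape of $F$, so the bijection for the varying lower-left summand can be applied to it; by contrast, the set of cells whose strictly-lower-left region contains $\rho$ is an upper order ideal of $F$, i.e.\ a skew shape, and no such reduction exists. Concretely, although $12\ftsim 21$, the patterns $1\oplus 12=123$ and $1\oplus 21=132$ are \emph{not} FT-equivalent: by Stankova's shape-Wilf-ordering of $S_3$, the pattern $132$ sits in a shape-Wilf class by itself, separate from $\{123,213,321\}$ and from $\{231,312\}$. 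Hence the links $123\sfsim 132$ needed for (e), and $1\oplus 321\sfsim 1\oplus 123$ and $12\oplus 12\sfsim 12\oplus 21$ needed for (d), are false statements rather than merely unproved ones, and these parts cannot be closed at the level of Ferrers-equivalence at all. The bridge has to be the genuinely weaker relation $\msim$ fed into Observation~\ref{obs-1r}: Facts~\ref{fac-diag} and~\ref{fac-plus} legitimately give only the partial links $1234\sfsim 2134\sfsim 3214$ and $1243\sfsim 2143$ (and $123\sfsim 213\sfsim 321$), and the remaining merges --- $123\msim 132$ for (e), and $1234\msim 1243\msim 1432$ for (d) --- come from the $m$-equivalence of the single patterns themselves, namely the Savage--Wilf theorem that all of $S_3$ forms one $m$-class and the corresponding single-pattern classification of Jel\'{\i}nek and Mansour for length four, exploiting in particular that $\msim$ (unlike $\sfsim$ and $\ftsim$) is closed under reversal.
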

\begin{proof}
Parts (a) to (e) all use Obs.~\ref{obs-1r} to conclude the $m$-equivalence in conjunction with Fact~\ref{fac-2p12q} for (a), Fact~\ref{fac-1223} for (b) and Facts~\ref{fac-plus} and \ref{fac-diag} for (c)-(e).  Part (f) and the equivalence $\{112,1234\}\msim\{112,3214\}$ make use of Fact~\ref{fac-diag} and Obs.~\ref{obs-1r} first and then Fact~\ref{fac-plus}.  The equivalence $\{112,1234\}\msim\{112,2134\}$ follows from the strong Ferrers-equivalence of $123$ and $213$, together with Obs.~\ref{obs-1r} and Fact~\ref{fac-plus}.  Finally, part (h) follows from combining Fact~\ref{fac-312}, Obs.~\ref{obs-11} and Fact~\ref{fac-plus} in that order.
\end{proof}

\subsection{Sorting minimal/maximal letter technique.}

In this subsection, we prove some equivalences by defining bijections which reorder the relevant pattern-avoiding multiset permutations, expressed as words.

\begin{theorem}\label{teo1}
The following pair of patterns are $m$-equivalent:
\begin{enumerate}
\item $\{111,1221\}\msim\{111,2112\}$,
\item $\{112,1211\}\msim\{121,1112\}$.
\end{enumerate}
\end{theorem}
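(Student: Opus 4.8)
For the pair $\{111,1221\}\msim\{111,2112\}$ the plan is to reinterpret avoidance of $1221$ and $2112$ in terms of the letters of multiplicity two. Since any occurrence of $111$ requires a letter of multiplicity at least three, I may assume every letter of the multiset $S$ has multiplicity at most two (otherwise both counts are $0$). Now any occurrence of $1221$ or of $2112$ uses two distinct letters $x<y$, each appearing twice; recording for each doubled letter the pair of positions it occupies as an \emph{arc}, an occurrence of $1221$ is precisely a pair of arcs nested so that the \emph{smaller} letter is on the outside, while $2112$ is a nested pair with the \emph{larger} letter outside (two crossing or disjoint arcs always read as $1212/2121$ or $1122/2211$, never as $1221$ or $2112$). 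The bijection I would use is the map $\phi$ that fixes every letter of multiplicity one and, writing the doubled letters of $S$ as $d_1<\dots<d_m$, replaces each $d_i$ by $d_{m+1-i}$. This relabeling leaves every position's multiplicity unchanged, hence preserves $S$ and the avoidance of $111$, and it preserves the entire arc/nesting diagram (equal letters stay equal) while reversing the value-order among doubled letters. Consequently a nested pair of arcs with the smaller value outside is turned into one with the larger value outside, so $\phi$ sends every occurrence of $1221$ to an occurrence of $2112$ on the same four positions and vice versa. As $\phi$ is an involution, it is the required bijection between $\Av(S;\{111,1221\})$ and $\Av(S;\{111,2112\})$.

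\textbf{Part (2).} For $\{112,1211\}\msim\{121,1112\}$ I would argue by induction on the height $k$ of $S$, peeling off the minimal letter $1$ with multiplicity $c=a_1$ (the case $k=1$ is trivial). Write $u$ for the \emph{core} word obtained by deleting all copies of $1$; it lives on the alphabet $\{2,\dots,k\}$, that is, it is a height-$(k-1)$ instance of the same problem. The heart of the argument is a normal form, for each class, describing how the $c$ copies of $1$ sit relative to $u$. For the class $\{121,1112\}$: avoiding $121$ forces the $1$'s to form one contiguous block (a larger letter between two $1$'s is a $121$), and avoiding $1112$ forces that block to be a suffix when $c\ge 3$ (a larger letter after three $1$'s is a $1112$); so the block is inserted into any one of the $|u|+1$ gaps of $u$ when $c\le 2$, and only at the end when $c\ge 3$. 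For the class $\{112,1211\}$: avoiding $112$ forces every non-$1$ letter to precede the second $1$ (a larger letter after two $1$'s is a $112$), so the word is a prefix containing $u$ together with a single $1$, followed by $1^{c-1}$; avoiding $1211$ then forces, when $c\ge3$, that single $1$ to sit immediately before the trailing block, collapsing all $1$'s into a suffix. Thus a single $1$ is inserted into one of the $|u|+1$ gaps of $u$ (and $1^{c-1}$ appended) when $c\le 2$, and all $1$'s are appended at the end when $c\ge3$.

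In both classes the core $u$ ranges over the avoiders of the respective pattern pair on $\{2,\dots,k\}$, and the number of admissible insertions depends only on $c$ and $|u|=|S|-c$, equalling $|u|+1$ when $c\le 2$ and $1$ when $c\ge 3$ — identically for the two classes. By the induction hypothesis the two cores are equinumerous, so summing the (common) insertion count over cores yields $\av(S;\{112,1211\})=\av(S;\{121,1112\})$; matching equal gap-indices gives the explicit bijection, which \emph{sorts} the scattered copies of the minimal letter of a $\{112,1211\}$-avoider into a single contiguous block. The main obstacle is the careful verification of these normal forms, in particular the dichotomy between $c\le 2$ and $c\ge 3$ and the check that inserting copies of $1$ never creates a forbidden pattern together with letters of the core; once these structural descriptions are pinned down the enumeration is immediate. (For Part (1) there is no comparable bookkeeping: the only real insight is the reformulation of the two patterns as the two orientations of a nested pair of arcs, after which the order-reversing relabeling does the rest.)
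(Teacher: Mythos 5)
Your proposal is correct and takes essentially the same approach as the paper: part (1) is the identical order-reversing relabeling of the doubled letters in place (your arc/nesting formulation just spells out the verification the paper leaves implicit), and part (2) is the same induction on height with the same insertion counts ($|u|+1$ admissible positions when the multiplicity is at most $2$, a single forced position when it is at least $3$). The only cosmetic difference in part (2) is that you peel off the minimal letter of the original pair while the paper passes to the complemented pair $\{122,2212\}$ vs.\ $\{212,1222\}$ and peels off the maximal letter, which is the same argument under a trivial symmetry.
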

\begin{proof}
(1) \, Fix a multiset $S=1^{b_1}\cdots k^{b_k}$, and let $\mathcal{A}$ and $\mathcal{B}$ denote the
sets $\Av(S;111,1221)$ and $\Av(S;111,2112)$, respectively. Given $\lambda \in
\mathcal{A}$, let $a_1>\cdots>a_r$ denote the set of letters within $\lambda$ which occur twice.  Let
$\rho \in \mathcal{B}$ be obtained from $\lambda$ by replacing each occurrence $a_i$ with
$a_{r+1-i}$ for $1 \leq i \leq r$, leaving all other letters unchanged in their positions.  Note that
these other letters must occur once and therefore cannot affect the avoidance of any of the
patterns we consider. It is then seen that the mapping $\lambda \mapsto \rho$ is a bijection between
$\mathcal{A}$ and $\mathcal{B}$, as desired.

(2) \, Equivalently, we show $\{122,2212\}\msim\{212,1222\}$. Let us write $P_A=\{122,2212\}$ and
$P_B=\{212,1222\}$. With $S$ as above, we will describe a bijection between $\Av(S;P_A)$
and $\Av(S;P_B)$. We proceed by induction on the number $k=\he(S)$. Clearly, if $k=1$,
the required bijection is the identity mapping, since there is only one multipermutation of $S$, and
it avoids all patterns with two or more symbols.

Define now the multiset $S'=1^{b_1}\cdots (k-1)^{b_{k-1}}$ obtained by removing all copies of $k$
from~$S$. By induction, there is a bijection $f_{k-1}$ between $\Av(S';P_A)$ and $\Av(S';P_B)$. Let
$\lambda'$ be a multipermutation from $\Av(S';P_A)$. We want to insert $b_k$ copies of $k$ into
$\lambda'$ to create a $P_A$-avoiding multipermutation $\lambda$ of~$S$. If $b_k=1$, then we may
insert the symbol $k$ in any position of $\lambda'$ while preserving $P_A$-avoidance. If $b_k=2$,
then we observe that one of the copies of $k$ must be the leftmost symbol of $\lambda$ in order
to preserve $P_A$-avoidance, while the other can be placed arbitrarily. If $b_k\ge 3$, then all the
symbols $k$ in $\lambda$ must appear consecutively at the leftmost $b_k$ positions.

Similarly, when extending a multipermutation $\rho'\in\Av(S';P_B)$ to a multipermutation
$\rho\in\Av(S;P_B)$, we proceed as follows: if $b_k=1$, the symbol $k$ can be placed arbitrarily, if
$b_k=2$, the only restriction is that the two symbols $k$ must appear consecutively, and if $b_k\ge
3$, then the symbols $k$ must form the leftmost $b_k$ symbols of~$\rho$.

The above description shows that in both the $P_A$-avoiding and the $P_B$-avoiding multipermutations
of $S$, the position of all the symbols $k$ is uniquely determined by the position of the rightmost
copy of $k$. This yields a straightforward bijection $f_k$ between $\Av(S;P_A)$ and $\Av(S;P_B)$,
defined as follows: fix a $\lambda\in\Av(S;P_A)$, remove from $\lambda$ all the occurrences of $k$ to
obtain a $\lambda'\in \Av(S';P_A)$, define $\rho'=f_{k-1}(\lambda')\in\Av(S';P_B)$, and finally, let
$\rho$ be the unique member of $\Av(S;P_B)$ in which the rightmost occurrence of $k$ appears at the
same position as the rightmost occurrence of $k$ in~$\lambda$. We easily see that this provides the
required bijection.
\end{proof}

\emph{Remark:} Extending the bijections described above shows more generally
 $$\{1^{i+1},\tau\}\csim\{1^{i+1},\tau^c\}, \qquad i \geq 2,$$
$$\{112,121^{i-1}\}\csim\{121,1^i2\}, \qquad i \geq 3,$$
where $\tau$ denotes any permutation of the multiset $1^i\cdots k^i$ and $\tau^c$ is the
complement of~$\tau$.

\subsection{Equivalences by analysis of active sites.}

In this subsection, we establish several equivalences by considering active sites within multiset
permutations and the associated generating trees for the patterns in question.  An active site of a
parent multipermutation is in general a position in which we may insert one or more copies of a
letter in producing its offspring without introducing a given set of patterns.  In some instances, it
will be convenient to modify this definition somewhat to accommodate the patterns in question.  Throughout, we consider permutations of multisets of $[k]$, though at times it will be more convenient notationally to insert either successively smaller or larger letters into a parent permutation in producing its offspring.  For examples of the generating tree method applied to the avoidance problem on ordinary
permutations, see, e.g., \cite{JW1,JW2}.

We first establish the equivalence of $\{112,2212\}$ and $\{121,2122\}$ via an active site analysis where we successively insert smaller and smaller letters into a parent permutation.

\begin{theorem}\label{112,2212}
The sets of patterns $\{112,2212\}$ and $\{121,2122\}$ are $m$-equivalent, that is
$$\{112,2212\}\msim\{121,2122\}.$$
\end{theorem}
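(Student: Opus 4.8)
The plan is to build both avoidance classes by a common generating-tree construction, inserting the copies of the letters of $S=1^{b_1}\cdots k^{b_k}$ one value at a time, from the largest letter $k$ down to $1$, so that each newly inserted block consists of letters strictly smaller than all letters already present. Starting from the empty word and inserting the $b_k$ copies of $k$, then the $b_{k-1}$ copies of $k-1$, and so on, every multipermutation of $S$ is produced exactly once. For a parent word $w$ I would call a \emph{gap} any of the $|w|+1$ positions in which a new smallest letter may be placed, and I would determine, for each of $P_A=\{112,2212\}$ and $P_B=\{121,2122\}$, precisely which placements of the $b$ copies of the new minimum keep the word in the avoidance class. The goal is to show that the two resulting generating trees are isomorphic as labelled trees, which yields a bijection $\Av(S;P_A)\to\Av(S;P_B)$ respecting $S$, and hence $P_A\msim P_B$.

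First I would pin down the active sites. For $P_A$, since the inserted letter lies below everything, two inserted copies both having a larger letter to their right would create $112$; hence at most one inserted copy may be \emph{exposed} (have a larger letter after it), and all remaining copies must be appended after the whole word. The single exposed copy may go into a gap $g$ exactly when it creates no $2212$, i.e.\ when no value occurs at least twice before $g$ and at least once after $g$; call such gaps \emph{$A$-active}. Dually, for $P_B$ the pattern $121$ forces all inserted copies to form one contiguous block, and that block avoids $2122$ precisely when it sits in a gap $g$ for which no value occurs at least once before $g$ and at least twice after $g$; call such gaps \emph{$B$-active}. The main obstacle is to verify that these descriptions capture \emph{exactly} the avoidance-preserving placements: one must check that the new letter can only play the role of the smallest symbol in each forbidden pattern, and that inserting into an active gap cannot create a forbidden pattern among the larger letters either.

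I would then take the \emph{label} of a node to be its number of active gaps. A short count shows that in both trees the number of children equals this label (for $P_A$: one child for each $A$-active non-final gap receiving the exposed copy, plus one for the all-at-the-end placement; for $P_B$: one child per $B$-active gap), independently of $b\ge1$. The crucial step is to compute how the label changes under an insertion. Inserting $b$ copies lengthens the word by $b$, adding $b$ gaps, while the inserted block itself contributes exactly $(b-2)^+$ new inactive gaps and---this is the delicate point---leaves the active/inactive status of every gap governed by a larger value unchanged, precisely because we only ever insert into an active gap. Hence every child's label exceeds its parent's by exactly $b-(b-2)^+=\min(b,2)$, with \emph{no dependence on which gap was chosen}. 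This gap-independence is the heart of the matter, and I expect it to be the hardest part to make fully rigorous.

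Finally, since both trees start from the empty word with label $1$ and obey the identical succession rule ``a node of label $\ell$ produces $\ell$ children, each of label $\ell+\min(b,2)$'' at the level where $b$ copies are inserted, an easy induction shows that at each level all nodes share a single common label; consequently both trees are one and the same uniform rooted tree, depending only on the sequence $(b_k,\dots,b_1)$. Matching the $i$-th child to the $i$-th child at every level then furnishes a bijection between $\Av(S;P_A)$ and $\Av(S;P_B)$ that preserves the inserted multiplicities, establishing $\{112,2212\}\msim\{121,2122\}$ (and, incidentally, an explicit product formula for the common cardinality).
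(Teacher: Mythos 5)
Your proposal is correct and is essentially the paper's own argument: an active-site/generating-tree analysis in which letters are inserted in decreasing order of value, the forbidden patterns force all but (at most) one inserted copy to a canonical position, and the number of active sites grows by $\min(b,2)$ independently of the insertion choice, so both trees are uniform with the same labels and yield the common product formula. The paper phrases the increment as $v_j-1$ with $v_j=2$ or $3$ according as $a_j=1$ or $a_j\ge 2$, which is exactly your $\min(b,2)$.
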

\begin{proof}
Let $S=k^{a_1}(k-1)^{a_2}\cdots 1^{a_k}$, where $k \geq 1$ and $a_1,\ldots,a_k \geq 1$ are fixed.
We first enumerate members $\pi \in \Av(S;112,2212)$.  To do so, we consider the various partial
permutations $\pi_i \in \Av(S_i;112,2212)$, where $S_i=k^{a_1}\cdots (k-i+1)^{a_i}$ for $1 \leq
i \leq k$.  We form the permutations $\pi_{i+1}$ by inserting $a_{i+1}$ copies of $t=k-i$
appropriately into the $\pi_i$.  By an \emph{active site}, within a permutation $\pi_i$ of the stated
form where $1 \leq i \leq k$, we mean a position where one may insert a single copy of the letter $t$
without introducing an occurrence of 2212 (where one may assume $a_{k+1}=1$ in the case $i=k$).  It
is understood that if $a_{i+1}>1$, then all other letters $t$ are to be added at the very end of
$\pi_i$ in order to avoid 112.

Let $v_i=2$ if $a_i=1$ and $v_i=3$ if $a_i\geq 2$ for $1 \leq i \leq k$. We now show by induction on
$i$ that each $\pi_i \in \Av(S_i;112,2212)$ has exactly $s_i=1+\sum_{j=1}^i(v_j-1)$ active sites.
The $i=1$ case is apparent since there are $v_1$ (active) sites in the composition $\pi_1=k^{a_1}$
corresponding to the very first and very last positions of $\pi_1$ for all exponents $a_1$ and also
to the position directly after the first $k$ if $a_1\geq 2$.  Now assume that the hypothesis is true
in the $i$-case for some $1 \leq i<k$ and we show it holds in the $(i+1)$-case.  If $a_{i+1}=1$, then
a single $t$ can be inserted into any one of the sites of some $\pi_i$ without introducing either
pattern, and it is seen that regardless of where $t$ is inserted, the number of sites increases by
one (essentially,
one of the present sites is split into two).  Also, replacing $i$ with $i+1$ in $s_i$ raises its
value by one since $a_{i+1}=1$, which accounts for the additional site.  If $a_{i+1}>1$, then there
are two new sites introduced by the insertion of the letters $t$, i.e., one directly following the
leftmost added $t$ and another at the very end following the last $t$.  Since we have $s_{i+1}-s_i=2$
in this case, the induction is complete.

As all $\pi_i$ have the same number $s_i$ of active sites for each $i$ (with this number depending
only on $S$), we have that the number of possible $\pi_{i+1}$ is given by the product of $s_i$
with the number of $\pi_i$ for each $i$.  Thus, the number of possible $\pi=\pi_k \in
\Av(S;112,2212)$ of the stated form is given by $\prod_{i=1}^{k-1}s_i$.  A similar argument whose
main details we describe briefly shows that there are the same number of $\rho \in \Av(S;121,2122)$.
  Let $\rho_i\in \Av(S_i;121,2122)$ for $1\leq i \leq k$ and consider forming the $\rho_{i+1}$
from the various $\rho_i$ by inserting copies of $t$ appropriately.  Define active site analogously
except that now we insert all letters $u_{i+1}$ at the site as a single run (so as to avoid 121).
Reasoning by induction as before, one can show for each $i$ that there are $s_i$ sites in all
$\rho_i\in \Av(S_i;121,2122)$, which implies the same product formula as above for the number of
possible $\rho=\rho_k$.
\end{proof}

\begin{theorem}\label{112,2122}
 We have $\{112,2122\}\csim\{121,1222\}$.
\end{theorem}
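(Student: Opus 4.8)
The plan is to mirror the active-site analysis of Theorem~\ref{112,2212}, since the two statements share the flavor of pairing a length-three pattern with a length-four pattern and asking for $m$-equivalence. As before, I would fix a multiset $S=k^{a_1}(k-1)^{a_2}\cdots 1^{a_k}$ with each $a_i\ge 1$, and build up members of $\Av(S;112,2122)$ and of $\Av(S;121,1222)$ by inserting successively smaller letters. Concretely, set $S_i=k^{a_1}\cdots (k-i+1)^{a_i}$ and let $t=k-i$ be the letter to be inserted in passing from level $i$ to level $i+1$. For each pattern pair I would define an appropriate notion of \emph{active site}: a position in $\pi_i$ where one may insert copies of $t$ (with the multiplicity-$a_{i+1}$ copies handled by a fixed convention — e.g.\ all in one run, or all-but-one pushed to a forced location) without creating the forbidden length-four pattern, the length-three pattern having already restricted where runs of equal letters may sit. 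The aim is to show that at every level both pattern pairs admit the \emph{same} number $s_i$ of active sites, depending only on $S$, so that both avoidance classes are counted by the identical product $\prod_{i=1}^{k-1}s_i$.

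The core of the argument is the inductive site-count. I would let $v_i=2$ if $a_i=1$ and $v_i=3$ if $a_i\ge 2$, and conjecture the same recursion $s_i=1+\sum_{j=1}^i(v_j-1)$ that appeared in Theorem~\ref{112,2212}; the base case $s_1=v_1$ is immediate from inspecting $\pi_1=k^{a_1}$. For the inductive step I would argue, separately for each of the two pattern pairs, that inserting a single $t$ (case $a_{i+1}=1$) splits one existing site into two and hence raises the count by exactly one, matching the increment $s_{i+1}-s_i=1$; while inserting a run of $a_{i+1}\ge 2$ copies of $t$ creates exactly two new sites (one just after the run and one at the far end), matching $s_{i+1}-s_i=2$. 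The delicate point is verifying that the set of active sites is genuinely governed only by $v_i$ and not by finer structure of $\pi_i$: for $\{112,2122\}$ the pattern $112$ forces equal letters to cluster, while $2122$ controls the interleaving, and I must check that these two constraints interact to yield precisely the claimed count; the symmetric verification for $\{121,1222\}$ uses $121$ to force runs and $1222$ to control insertions.

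The main obstacle I anticipate is the careful case analysis in the inductive step — in particular, confirming that inserting $t$ as the \emph{smallest} current letter can never retroactively create an occurrence of the forbidden length-four pattern in which $t$ plays the role of the ``1'' of the pattern, given the positions already dictated by earlier insertions. Because $t$ is smaller than every letter already present, any occurrence it joins must use $t$ in the minimal role, and I would need to rule out, for each pattern, the configurations that would make a candidate site inactive. Once the site counts are shown to agree level by level, the equivalence follows exactly as in Theorem~\ref{112,2212}: the number of $\pi_{i+1}$ equals $s_i$ times the number of $\pi_i$, so both classes have cardinality $\prod_{i=1}^{k-1}s_i$, giving $\{112,2122\}\csim\{121,1222\}$. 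I would expect that, as with the previous theorem, one side can be described in full detail and the other dispatched briefly by the parallel reasoning.
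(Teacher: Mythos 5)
Your proposal rests on the claim that, as in Theorem~\ref{112,2212}, every member of $\Av(S_i;112,2122)$ has the same number $s_i$ of active sites, depending only on $S$, so that the avoidance class is counted by $\prod_{i=1}^{k-1}s_i$. This claim is false for these pattern sets, and the gap occurs exactly where you flag the "delicate point." The issue is the case $a_{i+1}\ge 3$: for $\{112,2122\}$, a position is inactive when some letter $x>t$ has at least one copy before it and at least two copies after it (the pattern $x\,t\,x\,x$), so inserting the leftmost $t$ at different sites of $\pi_i$ kills different numbers of sites to its \emph{right}, and the offspring of a single parent end up with \emph{varying} site counts. Concretely, with $S_1=3^1$ and $a_2=3$, the two offspring $2322$ and $3222$ have $3$ and $4$ active sites respectively, so already at the second level the site count is not a function of $S$ alone, and the product formula collapses. (Contrast this with $2212$, where the relevant bad configuration $x\,x\,t\,x$ interacts with the forced placement of extra copies in a way that does keep the count uniform; that structural difference is precisely why the paper proves the two theorems differently.)

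The paper's actual proof abandons the uniform-count argument and instead runs a generating-tree/succession-rule argument on the \emph{distribution} of the number of active sites: a parent with $\ell$ sites produces offspring all having $\ell+1$ (resp.\ $\ell+2$) sites when $a_{i+1}=1$ (resp.\ $2$), but when $a_{i+1}\ge 3$ it produces exactly one offspring with $j$ sites for each $j\in[3,\ell+2]$. One then checks that $\{121,1222\}$ obeys the identical succession rules and concludes by induction that the act statistic is equidistributed on $\Av(S_i;112,2122)$ and $\Av(S_i;121,1222)$ for every $i$; summing over all values of the statistic at $i=k$ gives the theorem. To repair your proposal you would need to replace the single invariant $s_i$ by this refined bookkeeping of how many words at level $i$ have each possible number of sites.
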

\begin{proof}
We show that $|\Av(S;112,2122)|=|\Av(S;121,1222)|$, where $S$ is as in the preceding proof.  Let
$\pi_i$ denote an arbitrary member of $\Av(S_i;112,2122)$, where $S_i$ is as before.   By an
\emph{offspring} of $\pi_i$, we mean some $\pi_{i+1} \in \Av(S_{i+1};112,2122)$ that can be
obtained from $\pi_i$ by inserting $a_{i+1}$ copies of $t=k-i$ appropriately.  Define an \emph{active
site} of $\pi_i$ to be a position in which a (single) $t$ may be inserted without introducing 2122.
Note that an offspring of $\pi_i$ is produced when a single $t$ is added at an active site and all
other $t$ are added at the end.

Suppose $1 \leq i <k$ and that $\pi_i$ has exactly $\ell$ (active) sites.  We consider the nature of
the offspring of $\pi_i$ based on cases for the exponent $a_{i+1}$.  If $a_{i+1}=1$ or $2$, then one
may verify that each of the $\ell$ offspring of $\pi_i$ has $\ell+1$ or $\ell+2$ sites, respectively.
 If $a_{i+1}\geq 3$, first note in this case that every site of $\pi_i$ to the right of the leftmost
$t$ is lost, as all offspring in this case must end in at least two letters $t$.  Allowing the
leftmost $t$ to occur in each of the possible positions, it is seen that there is exactly one
offspring of $\pi_i$ that has $j$ sites for each $j \in [3,\ell+2]$.

Now consider forming $\rho_{i+1}\in \Av(S_{i+1};121,1222)$ for $1 \leq i <k$ from $\rho_{i}\in
\Av(S_{i};121,1222)$.  Define offspring and (active) site analogously as before.  Suppose that
$\rho_i$ has $\ell$ sites and we describe its offspring.  If $a_{i+1}=1$ or $2$, then it is seen
again that each offspring of $\rho_i$ has $\ell+1$ or $\ell+2$ sites, respectively.  If $a_{i+1}\geq
3$, then inserting the run $t^{a_{i+1}}$ into a site effectively nullifies all sites of $\rho_i$
occurring to the left of the run. Thus, the number of sites in the offspring of $\rho_i$ ranges from
$3$ to $\ell+2$ in this case.  Comparing the offspring of the various $\pi_i$ and $\rho_i$, one can
show by induction on $i$ (upon considering cases based on the exponent $a_{i+1}$) that the number of
members of $\Av(S_i;112,2122)$ and having exactly $r$ sites is the same as the corresponding number
of members of  $\Av(S_i;121,1222)$ for all $r \geq 2$.  Allowing $r$ to vary over all possible
values then implies the desired result.
\end{proof}

\begin{theorem}\label{112,2121}
We have $\{112,2121\}\csim\{121,1122\}$.
\end{theorem}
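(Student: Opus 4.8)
The plan is to establish $\{112,2121\}\msim\{121,1122\}$ by the same active-site methodology used in Theorems~\ref{112,2212} and~\ref{112,2122}. Fix a multiset $S=k^{a_1}(k-1)^{a_2}\cdots 1^{a_k}$ and build its $\{112,2121\}$-avoiding multipermutations by successively inserting copies of smaller and smaller letters: starting from the partial permutations $\pi_i\in\Av(S_i;112,2121)$, where $S_i=k^{a_1}\cdots(k-i+1)^{a_i}$, I form $\pi_{i+1}$ by inserting $a_{i+1}$ copies of $t=k-i$. Because we must avoid $112$, the multiple copies of $t$ cannot all be placed freely; the natural convention (as in the earlier proofs) is to insert one copy at an \emph{active site} --- a position where inserting a single $t$ does not create $2121$ --- and to place the remaining copies at the very end. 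The goal is a refined enumeration: I want to track, for each $i$, how many members of $\Av(S_i;112,2121)$ have exactly $r$ active sites, and show this distribution matches the corresponding one for $\Av(S_i;121,1122)$.

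First I would analyze how the site count of an offspring depends on its parent's site count $\ell$ and on the exponent $a_{i+1}$, splitting into the cases $a_{i+1}=1$, $a_{i+1}=2$, and $a_{i+1}\ge 3$, exactly as in Theorem~\ref{112,2122}. For small exponents I expect each offspring to gain one or two sites uniformly; for $a_{i+1}\ge 3$, the presence of a long run of $t$'s at the end should kill some sites and produce a spread of possible site counts (one offspring for each value in some interval $[c,\ell+2]$). The key point is that the pattern $2121$ interacts with a trailing run of $t$'s differently from $2122$, so I must recompute precisely which sites survive: a site to the right of the leftmost inserted $t$ may be nullified because a later small letter together with two $t$'s and an intervening letter would complete $2121$. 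I would then carry out the parallel analysis for $\Av(S_i;121,1122)$, defining offspring and active sites analogously (inserting the copies of $t$ as a single run to respect $121$), and determine its site-count recursion under the same three cases.

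The heart of the argument is an induction on $i$ showing that the number of members of $\Av(S_i;112,2121)$ with exactly $r$ active sites equals the number of members of $\Av(S_i;121,1122)$ with exactly $r$ sites, for every $r\ge 2$. The base case $i=1$ is immediate since $\pi_1=\rho_1=k^{a_1}$ has a determined number of sites depending only on whether $a_1=1$ or $a_1\ge 2$. For the inductive step, I would match the two site-count transition rules case by case in $a_{i+1}$; provided the "parent with $\ell$ sites produces offspring with these site counts" rules coincide for the two pattern pairs, the refined counts propagate identically, and summing over $r$ gives $|\Av(S;112,2121)|=|\Av(S;121,1122)|$.

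The main obstacle I anticipate is the correctness of the site-transition rule for $\{112,2121\}$ in the regime $a_{i+1}\ge 3$. Unlike the $2212$ and $2122$ cases, the pattern $2121$ has its repeated large letter separated by a smaller one, so determining exactly which of the parent's sites remain active after a trailing run of $t$'s is inserted --- and verifying that the surviving interval of site counts is $[3,\ell+2]$ (or whatever the correct interval turns out to be) --- requires a careful local check rather than a routine adaptation. If the two pattern pairs do not yield literally identical transition rules in this case, the induction on plain site counts may fail, and I would instead need a finer statistic or an explicit bijection between the two families that respects site counts; confirming that the refined distributions nonetheless agree is where the real work lies.
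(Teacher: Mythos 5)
Your overall strategy --- a generating-tree induction showing that the distribution of active-site counts on $\Av(S_i;112,2121)$ matches that on $\Av(S_i;121,1122)$ for each $i$ --- is exactly the paper's, but the proposal leaves the decisive step unexecuted and, where it does commit to details, gets them wrong. The central missing idea is the definition of an active site for the pattern $2121$. Under your stated definition (``a position where inserting a single $t$ does not create $2121$''), \emph{every} position is active: an occurrence of $2121$ needs two equal copies of the small letter, so a lone inserted $t$, being a brand-new smallest value, can never complete one (nor can it complete $112$). The statistic therefore degenerates and the induction has nothing to propagate. The paper repairs this by declaring a position active if inserting a single $t$ there \emph{and then appending a second copy of $t$ at the end} avoids $2121$; equivalently, no letter value of $\pi_i$ occurs both before and after that position. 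This hypothetical-second-copy definition is what makes the site count a meaningful invariant for this pattern pair, and it is precisely the kind of ``modification of the definition to accommodate the patterns in question'' that the paper announces at the start of the subsection.

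Second, your anticipated transition rules are modeled on Theorem~\ref{112,2122} and do not hold here. With the correct definition the case split is $a_{i+1}=1$ versus $a_{i+1}\ge 2$ (not $1$/$2$/$\ge 3$): when $a_{i+1}=1$ the single $t$ may be placed in any of the $m_i+1$ positions, where $m_i=a_1+\cdots+a_i$, producing $\ell$ offspring with $\ell+1$ sites and $m_i-\ell+1$ offspring with $\ell$ sites --- so the offspring do \emph{not} uniformly gain a site, and their number depends on $m_i$ rather than only on $\ell$; when $a_{i+1}\ge 2$ the $\ell$ offspring have act values filling the interval $[2,\ell+1]$, not $[c,\ell+2]$. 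On the $\{121,1122\}$ side the matching definition (a position where a run of $t$ of length at least two can be inserted without creating $1122$) is also left unstated. You do flag, correctly, that the $2121$ case needs a fresh local analysis and that your guessed rules might fail, but since the proposal neither supplies the right site definition nor verifies any transition rule for either pattern pair, the argument as written does not go through; the one genuinely hard ingredient is exactly the part left open.
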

\begin{proof}
Let $\pi,\pi_i$, $\rho,\rho_i$, $S,S_i$ for $1 \leq i \leq k$ and offspring be defined as in
the proof of Theorem~\ref{112,2212}, but now in conjunction with the pattern sets $\{112,2121\}$ and
$\{121,1122\}$, respectively.  By an \emph{active site} in $\pi_i \in \Av(S_i;112,2121)$, we mean
a position in which one can insert a single letter $t$ such that no occurrence of $2121$ arises when
another copy of $t$ is appended to the end of the resulting multipermutation.  Let
$\text{act}(\lambda)$ denote the number of (active) sites of a multipermutation $\lambda$ and we will
make use of this notation in subsequent proofs.  Note that inserting a single $t$ (i.e., when
$a_{i+1}=1$) into any position of $\pi_i$ introduces neither 112 nor 2121 and changes the act
statistic value (always increasing it by one) if and only if the $t$ is inserted into a present site
of $\pi_i$.  On the other hand, if $a_{i+1}>1$, then inserting $t$ into a site $x$ of $\pi_i$ not the
last (and placing $a_{i+1}-1$ copies of $t$ at the end) nullifies all sites of $\pi_i$ to the right
of $x$, with $x$ effectively preserved; moreover, the site at the very end of $\pi_i$
is in essence replaced by a site at the very end of $\pi_{i+1}$.

Suppose now $\text{act}(\pi_i)=\ell$ where $1 \leq i<k$.  By the previous observations, if
$a_{i+1}=1$, then $\pi_i$ has $\ell$ offspring with $\ell+1$ sites and $m_i-\ell+1$ with $\ell$
sites, where $m_i=a_1+\cdots+a_i$.  If $a_{i+1}>1$, then it is seen that the set of act values in the
$\ell$ offspring of $\pi_i$ comprise the interval $[2,\ell+1]$.

Now define an active site in $\rho_i
\in \Av(S_i;121,1122)$ to be a position of $\rho_i$ in which one can insert a run of $t$ of length
two or more without introducing $1122$, with the corresponding statistic again denoted by act.  Upon
considering cases based on whether $a_{i+1}=1$ or $a_{i+1}>1$, one can show that the set of act
values of the offspring of $\rho_i$ where $\text{act}(\rho_i)=\ell$ is the same as those of the
offspring of $\pi_i$ above.  By induction on $i$ (the $i=1$ case trivial), the corresponding act
statistics on $\Av(S_i;112,2121)$ and $\Av(S_i;121,1122)$ are identically distributed for all
$1 \leq i \leq k$.  Taking $i=k$ in particular implies the desired equivalence of patterns.
\end{proof}

\begin{theorem}\label{112,2312}
We have $\{112,2312\}\csim\{121,1223\}\csim\{121,2213\}$.
\end{theorem}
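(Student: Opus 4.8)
The plan is to follow the active-site/generating-tree strategy of Theorems~\ref{112,2212} through~\ref{112,2121}, building each multipermutation of $S=k^{a_1}(k-1)^{a_2}\cdots 1^{a_k}$ by inserting copies of the letters $t=k-i$ from the largest down to the smallest, and to exhibit a single succession rule governing the growth of the active sites that is shared by all three pattern sets. Thus for each set I would first pin down how the forced length-three pattern constrains the placement of the $a_{i+1}$ new copies of $t$: avoidance of $112$ forces all but one copy of $t$ to the very end of the word (one ``free'' copy remaining), whereas avoidance of $121$ forces the copies of $t$ to be inserted as a single consecutive run, exactly as in the $\{121,2122\}$ analysis inside the proof of Theorem~\ref{112,2212}. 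With these placements fixed, an active site is a position at which inserting the new copies (the free copy for $\{112,2312\}$, the whole run for the two $121$-sets) creates no occurrence of the relevant length-four pattern.

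Next I would translate each length-four pattern into a local condition on the insertion position. Since $t$ is the current global minimum after insertion, it can only play the role of the symbol ``$1$'' in any of $2312$, $1223$, $2213$, and in each case that symbol sits in a fixed position of the pattern. Hence: inserting $t$ completes $2312$ exactly when some value $a$ occurs before the point with a strictly larger value between that $a$ and the point, together with a further copy of $a$ after the point; it completes $1223$ exactly when a ``$bbc$'' (two equal letters followed by a strictly larger one) occurs entirely to the right of the point; and it completes $2213$ exactly when two equal letters occur to the left of the point and a strictly larger letter occurs to its right. I would then record, for a parent with $\text{act}=\ell$ sites, the multiset of site-counts of its offspring, splitting into the cases $a_{i+1}=1$, $a_{i+1}=2$, and $a_{i+1}\ge 3$ exactly as in the proofs of Theorems~\ref{112,2122} and~\ref{112,2121}, tracking how each insertion destroys sites to one side and creates a new site adjacent to the inserted letters.

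The crux is to verify that these three bookkeeping computations yield the same succession rule, i.e.\ the same distribution of offspring site-counts as a function of $\ell$ (and of $m_i=a_1+\cdots+a_i$ in the $a_{i+1}=1$ case). Granting this, a single induction on $i$ shows that the number of members of $\Av(S_i;\,\cdot\,)$ having exactly $r$ active sites is the same for all three sets and every $r$, and taking $i=k$ and summing over $r$ gives $\{112,2312\}\csim\{121,1223\}\csim\{121,2213\}$. I expect the genuine obstacle to lie in matching the $\{121,1223\}$ and $\{121,2213\}$ rules to each other: the $1223$-condition is a purely right-looking ``$bbc$'' test, while the $2213$-condition couples a left-hand repeated value with a right-hand larger value, so the two place active sites in genuinely different positions even though, by the theorem, their counts must agree. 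Reconciling these---most plausibly by checking that inserting the run of $t$'s destroys and creates sites in the same numerical pattern in both situations, so that the set of offspring act-values is the same interval---will demand the most careful case analysis, after which the comparison with $\{112,2312\}$ should follow comparatively directly once the run-versus-end placement of the extra copies is accounted for.
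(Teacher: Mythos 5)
Your plan coincides with the paper's own (first) proof: it defines active sites for each of the three pattern sets exactly as you describe, verifies that for $a_{i+1}=1$ every offspring has $\ell+1$ sites while for $a_{i+1}\ge 2$ the offspring act-values form $[3,\ell+1]\cup\{\ell+a_{i+1}\}$ in all three cases, and concludes by induction on $i$ that the act statistic is equidistributed. The only details to adjust are that the offspring multiset is an interval plus the one extra value $\ell+a_{i+1}$ (coming from placing all copies of $t$ at the end, resp.\ the run in the last site) rather than a single interval, and that no dependence on $m_i$ arises in the $a_{i+1}=1$ case here; the paper also supplements this with an explicit bijection for $\{121,1223\}\csim\{121,2213\}$ that preserves the last letter and right-left maxima.
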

\begin{proof}
We again make use of the same notation.  For the first pattern set, let us define an active site to be a
position of $\pi_i \in \Av(S_i;112,2312)$ in which a single $t$ may be inserted without
introducing $2312$, where any remaining $t$ must be added at the end of $\pi_i$.  Suppose
$\text{act}(\pi_i)=\ell$, where $1 \leq i<k$.  If $a_{i+1}=1$, then each offspring of $\pi_i$ is seen
to have $\ell+1$ sites.  On the other hand, if $a_{i+1}\geq 2$, then inserting the leftmost $t$ into
any site $y$ other than the last destroys all sites of $\pi_i$ to the right of $y$.  Note that $y$
itself is split into two sites, with a new site reemerging at the end of $\pi_{i+1}$ corresponding to
the final added $t$.  Thus as $y$ varies, one gets offspring whose act values comprise the interval
$[3,\ell+1]$.  If all of the letters $t$ are added at the very end of $\pi_i$, then every site of $\pi_i$
is preserved with each position directly following a $t$ active as well in this case, which implies
that the offspring will have $\ell+a_{i+1}$ sites altogether.

Now suppose $\rho_i\in \Av(S_i;121,1223)$ with $\text{act}(\rho_i)=\ell$ or $\gamma_i\in
\Av(S_i;121,2213)$ with $\text{act}(\gamma_i)=\ell$, where active sites are defined analogously.
One can show by comparable reasoning as before that if $a_{i+1}=1$, then the offspring of both
$\rho_i$ and $\gamma_i$ all have act values of $\ell+1$, whereas if $a_{i+1}\geq 2$, then the values
comprise the set $[3,\ell+1]\cup\{\ell+a_{i+1}\}$.  By induction on $i$ (the $i=1$ case trivial), it
is seen that the various act statistics defined on the sets consisting of the possible $\pi_i$,
$\rho_i$ or $\gamma_i$ are identically distributed for $1 \leq i \leq k$, which in particular implies
the desired equivalences.

It is also possible to establish the second equivalence via a bijection.  It is instructive to
describe such a bijection since it will be seen to preserve further statistics within the framework
of multiset equivalence.  It suffices to define a bijection $f$ between the set of permutations of
$1^{b_1}\cdots k^{b_k}$ that avoid $\{121,1223\}$ and those that avoid $\{121,2213\}$, where $k \geq
1$ and $b_1,\ldots,b_k\geq 1$ are fixed.  Suppose $\alpha=x_1\cdots x_m$, expressed as a word, is a
permutation belonging to the former set, where $m=b_1+\cdots+b_k$.  We first decompose $\alpha$ as
$\alpha=\alpha^{(1)}k\alpha^{(2)}$, where $\alpha^{(2)}$ contains no $k$.  Define
$\widetilde{\alpha}$ by $\widetilde{\alpha}=\text{rev}(\alpha^{(1)})k\alpha^{(2)}$, where
$\text{rev}(\alpha^{(1)})$ denotes the reversal of $\alpha^{(1)}$.  If $\alpha^{(2)}=\varnothing$,
then set $f(\alpha)=\widetilde{\alpha}$.  Otherwise, let $k_1$ denote the largest letter occurring in
$\alpha^{(2)}$ and suppose $\alpha^{(2)}=\alpha^{(3)}k_1\alpha^{(4)}$, where $\alpha^{(4)}$ contains
no $k_1$.

We now introduce the following definition.  Suppose $w=w_1w_2\cdots$ is a $k$-ary word and $i \in
[k]$.  Then we will refer to a (maximal) string of consecutive letters in $w$ all of which belong to
$[i,k]$ as an $i$-\emph{upper run} and a string all of whose letters belong to $[i-1]$ as an
$i$-\emph{lower run}.  We consider a left-to-right scan of the $k_1$-upper and $k_1$-lower runs of
$\lambda=\text{rev}(\alpha^{(1)})k\alpha^{(3)}k_1$.  Let $\delta_1,\ldots,\delta_t$ denote the
distinct $k_1$-lower runs in $\alpha^{(3)}$, where $t=0$ is possible. Then $\alpha^{(3)}$ can be decomposed as
$\alpha^{(3)}=\rho_0\delta_1\rho_1\cdots\delta_t\rho_t$ if $t>0$, with $\alpha^{(3)}=\rho_0$ if $t=0$, where $\rho_0,\ldots,\rho_t$ are $k_1$-upper runs with (only) $\rho_0$ and $\rho_t$ possibly empty.  Similarly, let $\text{rev}(\alpha^{(1)})=\tau_0\sigma_1\tau_1\cdots\sigma_s\tau_s$ if $s>0$, with $\text{rev}(\alpha^{(1)})=\tau_0$ if $s=0$, where $\sigma_1,\ldots,\sigma_s$ are $k_1$-lower runs, $\tau_0,\ldots,\tau_s$ are $k_1$-upper runs and $\tau_0,\tau_s$ are possibly empty.

We now define a multiset $\lambda^*$ derived from $\lambda$ as follows.  If $s>t>0$, then let $\lambda^*$ be defined as
$$\lambda^*=\tau_0\text{rev}(\delta_t)\tau_1\text{rev}(\delta_{t-1})\tau_2\cdots\text{rev}(\delta_1)\tau_t\sigma_1\tau_{t+1}\sigma_2\tau_{t+2}\cdots \sigma_{s-t}\tau_s k \rho_0\sigma_{s-t+1}\rho_1\cdots \sigma_s\rho_tk_1,$$
where it is seen that this definition may be extended to the case when $s=t>0$, with $\lambda^*=\lambda$ if $t=0$ for all $s$.  If $t>s>0$, then let $\lambda^*$ be given by
\begin{align*}
\lambda^*=&\tau_0\text{rev}(\delta_t)\tau_1\text{rev}(\delta_{t-1})\tau_2\cdots\text{rev}(\delta_{t-s+1})\tau_sk\rho_0\text{rev}(\delta_{t-s})\rho_1\text{rev}(\delta_{t-s-1})\rho_2\cdots
\text{rev}(\delta_1)\rho_{t-s}\\
&\sigma_1\rho_{t-s+1}\sigma_2\rho_{t-s+2}\cdots\sigma_s\rho_tk_1,
\end{align*}
which can be extended to the case when $t>s=0$.  That is, if $\lambda'$ is obtained from
$\lambda$ by replacing the runs $\delta_1,\ldots,\delta_t$ with
$\text{rev}(\delta_t),\ldots,\text{rev}(\delta_1)$ in that order, then $\lambda^*$ is obtained
from $\lambda'$ by repositioning the $k_1$-lower runs to the right of the last $k$ so that they now
occur prior to those to the left of this $k$, maintaining the order of the $k_1$-upper runs (as well
as the order of the letters within all runs).   Note that the $k_1$-upper runs of $\lambda^*$ are the same as those in $\lambda$, with the relative order of $k_1$-lower and $k_1$-upper runs in a left-to-right scan also seen to be the same.

If $\alpha^{(4)}$ is empty, then set $f(\alpha)=\lambda^*$.  Note that by reordering the $k_1$-lower
runs as described, we have eliminated any possible occurrences of $2213$ in which the $3$ can
correspond to the terminal $k_1$.  If $\alpha^{(4)}$ is non-empty, then let $k_2<k_1$ be the largest
letter of $\alpha^{(4)}$ and write $\alpha^{(4)}=\alpha^{(5)}k_2\alpha^{(6)}$, where $\alpha^{(6)}$
contains no $k_2$.  Then consider any $k_2$-lower runs within the $\alpha^{(5)}$ section of
$\tau=\lambda^*\alpha^{(5)}k_2$.  We arrange the $k_2$-lower runs of $\tau$ such that the reversals
of those in $\alpha^{(5)}$ occur (in reverse order) prior to the others, as we did with the
$\delta_i$'s above in $\lambda$.  If $\tau^*$ denotes the resulting word, then set $f(\alpha)=\tau^*$
if $\alpha^{(6)}=\varnothing$.

Otherwise, we continue in the manner described until $\alpha^{(2i)}=\varnothing$ for some $i \geq
4$, setting $f(\alpha)$ equal to the word that results after applying the procedure described above
for a final time.  By construction, it is seen that $f(\alpha)$ avoids 121, as does each word arising
from an intermediate step of the algorithm.  One may verify also that $f(\alpha)$ avoids 2213; note
that it suffices to check that $f(\alpha)$ contains no 2213 in which the 3 corresponds to a (strict)
right-left maximum.  To reverse $f$, consider successively the right-left maxima, starting with the
last letter and working back to the rightmost $k$, where we reverse each step of the algorithm
described above starting with the last.  Note that this may be done since the values of right-left
maxima are preserved by each step of the algorithm and hence by $f$.
\end{proof}

\emph{Remark:} From the preceding proof, we have in particular that the multiset equivalence of
$\{121,1223\}$ and $\{121,2213\}$ respects the last letter and right-left maxima statistics.
\medskip

\begin{theorem}\label{121,1322}
We have $\{121,1322\}\csim\{112,1232\}\csim\{112,2132\}$.
\end{theorem}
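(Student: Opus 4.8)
The plan is to run the same active-site induction used in Theorems~\ref{112,2212}--\ref{112,2312}, inserting successively smaller letters into $S=k^{a_1}(k-1)^{a_2}\cdots 1^{a_k}$ and building the partial permutations $\pi_i$, $\rho_i$, $\gamma_i$ of $S_i$ avoiding the respective sets, with $t=k-i$ the new smallest letter. The insertion convention is dictated by the length-three pattern: to avoid $121$ the $a_{i+1}$ new copies of $t$ are inserted as a single run, while to avoid $112$ one copy of $t$ is placed at the chosen site and the remaining $a_{i+1}-1$ copies are appended at the very end. In each case the active site is the gap receiving that single copy or run without creating the length-four pattern. The first thing to record is a clean description of the sites: inserting the smallest letter $t$ just before a subword order-isomorphic to $211$ (a letter followed by two equal smaller ones) creates $1322$, while inserting $t$ just before a subword order-isomorphic to $121$ creates $1232$. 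Hence for both $\{121,1322\}$ and $\{112,1232\}$ a gap is active exactly when no such configuration lies to its right; since removing letters from the right can only make this condition hold, in both cases the active sites form a suffix of the gap sequence, and $\text{act}=\ell$ means precisely that the rightmost $\ell$ gaps are active.

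Next I would compute the succession rule and verify it agrees for $\{121,1322\}$ and $\{112,1232\}$. When $a_{i+1}=1$ a single $t$ can supply neither of the two equal letters of a $211$ or a $121$, so it creates no new forbidden configuration and each of the $\ell$ offspring has $\ell+1$ active sites. When $a_{i+1}\ge 2$ the interesting behaviour appears. For $\{121,1322\}$, inserting the run at an active site with $\beta$ letters to its right leaves active exactly the gap immediately before the run together with all gaps inside or after it, since any earlier gap has to its right the $211$ formed by a preceding letter and two copies of $t$; a short count gives $a_{i+1}+\beta+1$ active sites. For $\{112,1232\}$, a site with $\beta\ge 1$ letters to its right yields $a_{i+1}+\beta$ sites (via the $121$ made by the interior copy, an intervening letter, and a trailing copy), whereas the terminal site $\beta=0$ merges the interior and trailing copies into a single run and yields $\ell+a_{i+1}$. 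As $\beta$ runs over $\{0,\dots,\ell-1\}$, both prescriptions produce $\ell$ offspring whose act-values fill the interval $[a_{i+1}+1,\,a_{i+1}+\ell]$, so the two succession rules coincide. The induction (trivial at $i=1$) then shows $\text{act}$ is equidistributed on $\Av(S_i;121,1322)$ and $\Av(S_i;112,1232)$ for every $i$, and taking $i=k$ gives the first $m$-equivalence.

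The main obstacle is the third set $\{112,2132\}$, which I expect to demand the most care. An occurrence of $2132$ that uses the inserted smallest letter as its ``$1$'' has the form $b\cdots t\cdots c\cdots b$ with $b<c$: it requires a middle value $b$ \emph{before} the insertion gap and a larger value $c$ followed by a repeat of $b$ \emph{after} it. Thus, in contrast to the $1322$ and $1232$ cases, the activity of a gap appears to depend on letters on both sides, and it is not immediate that the active sites form a suffix. The plan is to exploit the $112$-avoidance of $\gamma_i$ -- after the second occurrence of any value no strictly larger letter can appear -- to pin down which repeated values can play the two equal $b$'s, thereby showing that the ``$b$ before the gap'' requirement is already governed by the structure to the right, so that the active sites again form a suffix and obey the very same succession rule found above. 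With that established, the identical induction gives $\text{act}$ equidistributed on $\Av(S_i;112,1232)$ and $\Av(S_i;112,2132)$, completing the chain. Should the two-sided dependence make this bookkeeping unwieldy, I would instead prove $\{112,1232\}\csim\{112,2132\}$ by a direct statistic-preserving bijection, exactly as the second equivalence of Theorem~\ref{112,2312} was handled, and keep this as a fallback for the delicate case.
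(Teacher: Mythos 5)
Your framework is the same as the paper's (active-site induction, inserting successively smaller letters, succession rule $\ell\rightsquigarrow\ell+1$ when $a_{i+1}=1$ and act-values filling $[a_{i+1}+1,a_{i+1}+\ell]$ when $a_{i+1}\geq 2$), and your treatment of $\{121,1322\}\csim\{112,1232\}$ is correct: for those two sets the forbidden configuration created by the new minimal letter ($211$ resp.\ $121$ to its right) lies entirely to the right of the insertion gap, so activity is indeed a suffix condition and your count of $a_{i+1}+\beta+1$ resp.\ $a_{i+1}+\beta$ (with $\ell+a_{i+1}$ at the terminal site) matches the paper's.

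The gap is in the third set. Your plan for $\{112,2132\}$ is to use $112$-avoidance to show that the active sites again form a suffix of the gap sequence, and then reuse the same computation. That claim is false. Take $\gamma=23131$, a permutation of $1^22^13^2$ that avoids both $112$ and $2132$ and is reachable by your insertion procedure (insert one $1$ after the $3$ in $233$ and one at the end). Inserting a new minimal letter $t$ in the gap between the $1$ and the second $3$ gives $2\,3\,1\,t\,3\,1$, which contains $2132$ as the subsequence $1,t,3,1$; so that gap is inactive. But the gap between the $2$ and the first $3$ is active ($2\,t\,3\,1\,3\,1$ contains no $2132$, since the only candidate for the repeated middle value before $t$ is $2$, which never recurs), as are the last two gaps. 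So the active gaps are $\{0,1,2,4,5\}$ and the inactive one sits strictly between active ones: no suffix (and no prefix). The reason is exactly the two-sided dependence you flagged -- the ``$2$'' of $2132$ sits before the gap and the ``$32$'' after it -- and $112$-avoidance does not remove it ($23131$ avoids $112$). Consequently the parametrization of offspring by ``number of letters to the right of the $\beta$-th active site'' collapses, and your derivation of the succession rule for $\{112,2132\}$ does not go through as written. The paper's (admittedly terse) route is different: it keeps the non-suffix site structure and observes that inserting the leftmost $t$ at a site destroys the \emph{non-terminal sites to its right} (rather than those to its left), which still yields act-values $[a_{i+1}+1,a_{i+1}+\ell]$; one can check on the example above that the five offspring of $23131$ under insertion of $t^2$ have act-values $3,4,5,6,7$ even though the sites are not contiguous. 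You would need to redo the bookkeeping along those lines, or actually construct the fallback bijection you mention; as it stands the third equivalence is unproven.
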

\begin{proof}
Proceeding as in the prior proof and using the same notation, consider permutations of
$k^{a_1}\cdots 1^{a_k}$ that avoid either $\{121,1322\}$, $\{112,1232\}$ or $\{112,2132\}$. If
$a_{i+1}=1$ and $\text{act}(\pi_i)=\ell$, then all offspring of $\pi_i$ have act value $\ell+1$ for
each pattern set.  If $a_{i+1}>1$, then the act values of the offspring of $\pi_i$ are seen to
comprise the interval $[a_{i+1}+1,a_{i+1}+\ell]$ in each case.  Note that when avoiding
$\{121,1322\}$, inserting $a_{i+1}>1$ copies of $t$ into a site $v$ of $\pi_i$ destroys all sites to
the left of $v$ while splitting $v$ into $a_{i+1}+1$ sites. If avoiding $\{112,1232\}$, inserting the
leftmost $t$ into a site of $\pi_i$, not the last, is seen to destroy all sites to its left when
$a_{i+1}>1$. Similar reasoning applies to $\{112,2132\}$ except that (non-terminal) sites to the right are
destroyed. Since the pattern sets obey the same rules with regard to the number of sites in
offspring, the result follows.
\end{proof}

\emph{Remark:} Extending the previous proof shows more generally
$$\{121,132^r\}\csim\{112,1232^{r-1}\}\csim\{112,2132^{r-1}\}, \qquad r \geq 1.$$

One can provide a proof of the following result analogous to the previous ones by modifying appropriately the definition of a site.  However, we find it more instructive to give a bijective argument which makes use of a certain encoding of the offspring and suggests how one might go about finding bijective proofs of other comparable results.

\begin{theorem}\label{teormulti1}
We have $\{112,1231\}\csim\{121,1132\}$.
\end{theorem}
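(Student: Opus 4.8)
The plan is to follow the generating-tree strategy of Theorems~\ref{112,2212}--\ref{121,1322}, but to phrase the recursion as an explicit bijection driven by an encoding of the offspring. Fix $S=k^{a_1}(k-1)^{a_2}\cdots 1^{a_k}$ and, with $S_i=k^{a_1}\cdots(k-i+1)^{a_i}$ as before, build the avoiders of each set by inserting $a_{i+1}$ copies of $t=k-i$ into a parent $\pi_i$. The admissible placements are dictated by the length-three pattern: avoiding $112$ forces a $\{112,1231\}$-avoider to carry one ``lead'' copy of $t$ with the remaining $a_{i+1}-1$ copies pushed to the very end, while avoiding $121$ forces a $\{121,1132\}$-avoider to insert all $a_{i+1}$ copies as a single consecutive run. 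In each family I define the active sites of $\pi_i$ as in the proof of Theorem~\ref{112,2121}: a gap at which the prescribed insertion can be made without creating the length-four pattern, writing $\text{act}(\pi_i)$ for their number.

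The key geometric observation, which I would establish first, is that the active sites admit a clean description. For $\{112,1231\}$, placing the lead $t$ at a gap $g$ creates a $1231$ exactly when an ascent survives among the letters to the right of $g$ (the lead $t$ and a terminal $t$ serving as the two equal minima); hence the active gaps are precisely those contained in the maximal weakly decreasing suffix of $\pi_i$. Dually, for $\{121,1132\}$, inserting the run at $g$ creates a $1132$ exactly when an inversion survives to the right of $g$, so the active gaps are those in the maximal weakly increasing suffix. From these descriptions I would read off the succession rules. When $a_{i+1}=1$ a single $t$ may be placed in any of the $m_i+1$ gaps and, writing $\ell=\text{act}(\pi_i)$, one checks that in \emph{both} families the resulting offspring realise the multiset of $\text{act}$-values consisting of each of $2,\dots,\ell-1$ once, the value $\ell$ with multiplicity $m_i-\ell+2$, and the value $\ell+1$ once. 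When $a_{i+1}\ge 2$ only the $\ell$ active gaps are admissible, and in \emph{both} families the $\ell$ offspring realise the values of the interval $[a_{i+1}+1,\,a_{i+1}+\ell]$, each exactly once (as in the pattern of Theorem~\ref{112,2312}). Verifying that these two rules coincide is the heart of the matter.

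Granting the matching succession rules, the bijection is produced by the promised encoding. To each offspring of $\pi_i$ I attach a label consisting of its value $\text{act}(\pi_{i+1})$ together with a tie-break that orders equal-$\text{act}$ offspring by the position of the inserted lead copy (respectively run) from left to right; since $m_i$ and $\ell$ determine the available labels identically in the two families, the label alphabet at each node agrees. Encoding a full avoider of $S$ by the sequence of labels along its root-to-leaf path and matching equal label sequences yields, by induction on $i$, a bijection between $\Av(S_i;112,1231)$ and $\Av(S_i;121,1132)$ that preserves $\text{act}$. Taking $i=k$ gives $\av(S;112,1231)=\av(S;121,1132)$, and as $S$ is arbitrary this proves $\{112,1231\}\csim\{121,1132\}$.

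The principal obstacle is the $a_{i+1}\ge 2$ computation, where inserting the terminal block (respectively the run) destroys every active site lying to one side of the insertion point while the chosen site is split and a fresh site is regenerated at the extreme end; one must verify carefully that the surviving and newly created sites yield the \emph{same} interval of $\text{act}$-values in the two families, despite the structurally different placements (a lead letter separated from its tail versus a solid run). A secondary subtlety is checking that the encoding is reversible — that a label sequence reconstructs a unique avoider in each family — which reduces to recovering the insertion position unambiguously from the pair $(\text{act-value},\text{tie-break})$ at every step, and to confirming that the inductive $\text{act}$-preserving correspondence is compatible with the labelling at the next level.
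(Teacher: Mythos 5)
Your proof is correct and takes essentially the same route as the paper's: your statistic $\mathrm{act}(\pi_i)=\ell$ is a re-parametrization of the paper's rightmost-ascent-bottom position $p_i$ (namely $\ell=m_i-p_i+1$), your tie-break plays the role of the paper's parenthetical element of $[p_i+1]$, and your two succession rules (which I have checked, including the $a_{i+1}\ge 2$ interval $[a_{i+1}+1,a_{i+1}+\ell]$ and the $a_{i+1}=1$ multiset with $\ell$ occurring $m_i-\ell+2$ times) are exactly the paper's rules for $p_i$ and $p_i'$ rewritten in terms of $\ell$. The paper itself remarks that a proof ``analogous to the previous ones by modifying appropriately the definition of a site'' is available, which is precisely what you have carried out, with the correct site descriptions (gaps in the maximal weakly decreasing, resp.\ weakly increasing, suffix).
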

\begin{proof}
Let  $\pi$ be a multipermutation of
the multiset $S=k^{a_1}\cdots 1^{a_k}$.  Given $1 \leq i \leq k$, let $S_i=k^{a_1}\cdots
(k-i+1)^{a_i}$ and $\pi_i \in \Av(S_i;112,1231)$ be obtained from $\pi$ by considering only the relative
positions of the parts $k,\ldots,k-i+1$.  We construct an encoding for $\pi$ as follows. Let
$\pi_i=\alpha_1\cdots\alpha_{m_i}$ as a word where $m_i=a_1+\cdots+a_i$ and $p_i$ denote the index
$r$ such that $\alpha_r<\alpha_{r+1}$ with $r$ maximal, if it exists (i.e., $p_i$ corresponds to the
rightmost ascent bottom of $\pi_i$), with $p_i=0$ otherwise (i.e., if $\pi_i$ is decreasing, perhaps
weakly). We seek to form $\pi_{i+1}$ from $\pi_{i}$ for $i\geq1$ by making an appropriate insertion of
the $a_{i+1}$ copies of $t=k-i$.

Note that in forming $\pi_{i+1}$ from $\pi_{i}$ when $p_{i}>0$ that there are exactly $p_{i}$
positions to the left of the rightmost ascent bottom of $\pi_{i}$ in which to insert $t$.
Observe further that if a $t$ is inserted anywhere to the left of $p_{i}$ in $\pi_{i}$, then
necessarily $a_{i+1}=1$, for otherwise $\pi_{i+1}$ would contain an occurrence of $1231$ as the remaining
copies of $t$ are forced to occur at the very end (in particular, to the right of the last ascent).

Let $\ell$ denote the smallest index $j>1$ such that $p_j>0$, assuming such $j$ exists.  Then
$p_1=\cdots=p_{\ell-1}=0$ implies $\pi_{\ell-1}$ is decreasing (i.e., $\pi_{\ell-1}=k^{a_1}\cdots
(u+1)^{a_{\ell-1}}$ where $u=k-\ell+1$).  Then $p_\ell>0$ and $\pi_\ell$ avoiding $112$ implies $\pi_\ell$ contains only one ascent, with $p_\ell$ determining the position of that ascent.
Note that $\pi_\ell$ is of the form $\pi_\ell=k^{a_1}\cdots x\cdots
(u+1)^{a_{\ell-1}}u^{a_\ell-1}$, where $x$ corresponds to the position of the leftmost
letter $u$ (i.e., $x$ corresponds to the ($p_\ell$)-th entry of $\pi_\ell$ where $1 \leq p_\ell
\leq m_{\ell-1}$).

Now assume $\ell \leq i \leq k-1$.  If it is the case that both $p_{i+1}=p_{i}+1$ and $a_{i+1}=1$, then we  must also specify, in addition to the value of $p_{i+1}$, some element of the set $[p_{i}+1]$, which gives the entry number of the only $t$ in $\pi_{i+1}$.  Note that when $a_{i+1}=1$, we have $p_{i+1}=p_i+1$ if and only if the $t$ is inserted to the left of or within the rightmost ascent of $\pi_i$, as the position of the rightmost ascent bottom is shifted to the right by one place in this case.  Thus, the position number of $t$ must belong to $[p_i+1]$. Let $\rho$ denote the vector $(p_1,\ldots,p_k)$ consisting of the various $p_i$ values, where in addition an element of $[p_{i}+1]$ is specified parenthetically in the $i$-th component for each $i \in [\ell,k-1]$ such that $p_{i+1}=p_{i}+1$ and $a_{i+1}=1$.  One can verify that the sequence $p_{i}$ satisfies the following succession rules for $\ell  \leq i \leq k-1$:
$$p_{i+1}=\begin{cases}
{\displaystyle p_{i}+1}, &\emph{if~the~(single)}~t~\emph{is~inserted anywhere to the left of position}~p_{i}~\emph{in}~\pi_{i};\\
{\displaystyle p_{i}+s}, &\emph{if~the~leftmost}~t~\emph{is~inserted~in}~s\emph{-th}~\emph{position~to~the~right~of~position}~p_{i}~\emph{in}~\pi_{i},\\
{\displaystyle ~} &\emph{but~not~at the~very~end~of}~\pi_{i};\\
{\displaystyle p_{i}},  &\emph{if~the~run~of}~t~\emph{is~inserted~at~the~very~end~of}~\pi_{i}.
\end{cases}$$
From this, it is seen
that $\pi$ can be reconstructed from $\rho$, starting with a run of  $k$ and successively
inserting $k-1,\ldots,1$.  Note that in cases when there is no additional specified member of
$[p_{i}+1]$, the position of the added $t$ is determined completely by $p_{i+1}$ alone (i.e., when
$p_{i+1}\neq p_{i}+1$ or when $p_{i+1}=p_{i}+1$ and $a_{i+1}>1$).

Now let $\lambda$ be a multipermutation of $S$ that avoids $\{121,1132\}$.
Let $\lambda_1=k^{a_1}$ and $\lambda_{i+1}\in \Av(S_{i+1};121,1132)$ for $i\geq1$ be formed from
$\lambda_{i}\in \Av(S_{i};121,1132)$ by inserting $a_{i+1}$ copies of $t$ as before. If $1 \leq i
\leq k$, then let $\lambda_i=\beta_1\cdots \beta_{m_i}$ as a word and let $p_i'$ denote the index $r$
such that $\beta_r>\beta_{r+1}$ with $r$ maximal, if it exists (i.e., $p_i'$ corresponds to the
rightmost descent top of $\lambda_i$), with $p_i'=0$ otherwise (i.e., if $\lambda_i$ is increasing,
perhaps weakly).  Suppose $\ell>1$ is determined by $p_1'=\cdots=p_{\ell-1}'=0$, with $p_\ell'>0$
(assuming such $\ell$ exists).  Then the position of the (only) run of $u$ in $\lambda_\ell$ is
such that the first $u$ corresponds to the ($p_{\ell}'+1$)-st entry, where $1 \leq p_\ell' \leq
m_{\ell-1}$.

Suppose $\ell \leq i \leq k-1$.  If $p_{i+1}'=p_{i}'$, then the run of $t$ is inserted so that it
separates the letters of the rightmost descent of $\lambda_{i}$.  If $a_{i+1}=1$ and
$p_{i+1}'=p_{i}'+1$, then additionally we must specify some number $x$ in $[p_{i}'+1]$, which
indicates the position of the lone inserted $t$ letter if $x<p_{i}'+1$, with $t$ inserted directly
following the rightmost descent bottom of $\lambda_{i}$ if $x=p_{i}'+1$.  Let
$\rho'=(p_1',\ldots,p_k')$, where in addition an element of $[p_{i}'+1]$ is specified
parenthetically in the $i$-th component for each $i \in [\ell,k-1]$ satisfying the stated
requirements.  Note that $p_{i+1}'$ for $\ell\leq i \leq k-1$ satisfies
$$p_{i+1}'=\begin{cases}
{\displaystyle p_{i}'+1}, &\emph{if~the~(single)}~t~\emph{is~inserted to the left of position}~p_{i}'~\emph{in}~\lambda_{i};\\
{\displaystyle p_{i}'+s}, &\emph{if~the}~t~\emph{run~is~inserted~in}~s\emph{-th}~\emph{position~to~the~right~of~the}~(p_{i}'+1)\emph{-st~entry}\\
{\displaystyle ~} &\emph{of}~\lambda_{i};\\
{\displaystyle p_{i}'},  &\emph{if~the}~t~\emph{run~is~inserted~between~the}~(p_{i}')\emph{-th~and}~ (p_{i}'+1)\emph{-st~entries~of}~\lambda_{i}.
\end{cases}$$
From this, we see that $\lambda$ can be recovered from $\rho'$.  Note that the $p_i$ and $p_i'$ for
$i>\ell$ satisfy equivalent recurrences and hence yield the same set of possible vectors $\rho$ and
$\rho'$ (along with any parenthetical elements).

Thus, one may define a bijection between multipermutations avoiding $\{112,1231\}$ and those
avoiding $\{121,1132\}$ by starting with $\pi$ from the former set and producing its vector $\rho$.
Then read the entries from $\rho$ as a vector $\rho'$ for constructing members of the latter set
using the multiset of parts from $\pi$.  This yields $\pi'$ belonging to the latter set and the
mapping $\pi \mapsto \pi'$ is seen to be a bijection.  Note that this mapping preserves all part size
multiplicities, and in particular, the number of parts.
\end{proof}

\begin{theorem}\label{123,1121}
 We have $\{123,1121\}\csim\{132,1211\}\csim\{213,1121\}$.\label{1231121th}
\end{theorem}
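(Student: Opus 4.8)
The plan is to establish all three equivalences at once by the active-site generating-tree method of Theorems~\ref{112,2212}--\ref{121,1322}, inserting successively smaller letters. Fix $S=k^{a_1}\cdots1^{a_k}$ and $S_i=k^{a_1}\cdots(k-i+1)^{a_i}$, and build the avoiders of $S$ by inserting, at stage $i\mapsto i+1$, the $a_{i+1}$ copies of $t=k-i$ into a parent $\pi_i\in\Av(S_i;\cdot)$. Since $t$ is strictly smaller than every letter already present, in each of the length-three patterns $123$, $132$, $213$ a copy of $t$ can only play the role of the smallest entry, and in each length-four pattern it can only play the role of a ``$1$''. The goal is to attach to each of the three sets a suitable notion of \emph{active site} and to show that a parent with $\ell$ active sites always yields offspring whose active-site counts obey one and the same succession rule, depending only on whether $a_{i+1}=1$ or $a_{i+1}\ge2$.

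First I would record what a single inserted $t$ forbids. For $\{123,1121\}$, the pattern $123$ forbids an ascending pair of larger letters to the right of any $t$, so the binding requirement is that the letters to the right of the leftmost $t$ be weakly decreasing; accordingly I take the active sites of $\pi_i$ to be the gaps lying at or after its rightmost ascent. For $\{132,1211\}$, the pattern $132$ dually forbids a descending pair to the right of a $t$, and I take the active sites to be the gaps at or after the rightmost descent. For $\{213,1121\}$, the pattern $213$ forbids an existing letter before some $t$ being smaller than an existing letter after it, which forces every $t$ to occupy a \emph{splitting gap} of $\pi_i$, one whose prefix-minimum is at least its suffix-maximum; these splitting gaps are the active sites. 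In each of the three cases a direct check shows that when $a_{i+1}=1$ the $\ell$ offspring of a parent with $\ell$ active sites realize the act values $2,3,\dots,\ell+1$, each exactly once.

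The case $a_{i+1}\ge2$ is where the length-four pattern takes effect, and I expect it to be the main obstacle. Avoiding $1121$ (resp.\ $1211$) prohibits spreading the copies of $t$ over three or more active gaps, and in fact forces them into at most two gaps, with the leftmost occupied gap holding a single $t$ in the two cases involving $1121$ (resp.\ the rightmost occupied gap holding a single $t$ in the case of $1211$). One must then track carefully which active sites survive and which new ones appear---a non-terminal insertion splits a gap and destroys every site past the leftmost inserted $t$, whereas a run appended at the very end creates a block of new sites. The delicate point is that although the three notions of active site (ascent-based, descent-based, and splitting-gap-based) behave quite differently gap-by-gap, the resulting \emph{multisets} of offspring act values coincide: in all three cases the offspring of a parent with $\ell$ sites realize each value $v$ with multiplicity $\ell-v+1$ for $2\le v\le\ell$, together with one offspring of each value $v$ with $a_{i+1}+1\le v\le\ell+a_{i+1}$. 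Verifying this matching across the three sets, by the case analysis sketched above, is the technical core of the argument.

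Finally, I would close by induction on $i$. The three notions of active site agree in the base case, since the one-letter parent $\pi_1=k^{a_1}$ has exactly $a_1+1$ active sites under each definition; and since the succession rules coincide, the number of active sites is equidistributed over $\Av(S_i;123,1121)$, $\Av(S_i;132,1211)$ and $\Av(S_i;213,1121)$ for every $i$. Taking $i=k$ and summing over all act values yields $\av(S;123,1121)=\av(S;132,1211)=\av(S;213,1121)$, which is the desired $m$-equivalence.
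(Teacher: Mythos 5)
Your proposal follows essentially the same route as the paper's proof: the same three active-site definitions (gaps after the rightmost ascent bottom for $123$, after the rightmost descent top for $132$, and splitting gaps for $213$), the same structural consequence of $1121$/$1211$ (at most two runs of $t$, with the single-$t$ run on the prescribed side), and a succession rule whose aggregate form --- multiplicity $\ell-v+1$ for $2\le v\le \ell$ plus one offspring for each $v\in[a_{i+1}+1,\ell+a_{i+1}]$ --- agrees exactly with the case-by-case counts in the paper. The only difference is presentational: the paper tallies the $213$ offspring in finer sub-cases (single run versus two runs with the second at or not at the end) before arriving at the same totals.
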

\begin{proof}
Let $S=k^{a_k}\cdots1^{a_1}$ and $S_i=k^{a_k}\cdots i^{a_i}$ for $1 \leq i \leq k$.  We show
that permutations of $S$ avoiding any of the three sets of patterns are equinumerous.  We first
consider $\{123,1121\}$.  Consider forming members $\pi_i \in \Av(S_i;123,1121)$ from $\pi_{i+1}
\in \Av(S_{i+1};123,1121)$ for $1 \leq i<k$ by inserting $i$'s appropriately into $\pi_{i+1}$.
Note that all of the $i$'s must be inserted to the right of the rightmost ascent bottom of
$\pi_{i+1}$, lest a $123$ would arise.  Furthermore, if $a_i>1$, the $i$'s must be inserted either as
a single run or as two runs in which the first is a single $i$.  Suppose
$\text{act}(\pi_{i+1})=\ell$, meaning that the length of its final decreasing sequence is $\ell-1$.
If $a_{i}=1$, then $\pi_{i+1}$ has offspring with $2,3,\ldots,\ell+1$ sites.  If $a_i>1$, then for
each $j \in [2,\ell]$, we have that $\pi_{i+1}$ has $\ell+1-j$ offspring with $j$ sites and not
ending in $i$.  These offspring are obtained by inserting $a_i-1$ letters $i$ in the $j$-th rightmost
site and a single $i$ in any of the $\ell+1-j$ sites to the left of and including the first insertion
point.  On the other hand, there are $\ell$ offspring of $\pi_{i+1}$ ending in $i$, with their act
values comprising the interval $[a_i+1,a_i+\ell]$.  To see this, note that the final run of $i$
contributes $a_i-1$ sites in addition to those occurring at the point just when the leftmost $i$ has
been inserted into $\pi_{i+1}$.

In the case of avoiding $\{132,1211\}$, a site corresponds to a position to the right of the
rightmost descent top.  Thus $\text{act}(\pi)=\ell$ in this case means that the final increasing run
is of length $\ell-1$.  A similar analysis as before which we leave to the reader reveals that the
same rules are followed concerning the number of sites in offspring and yields the first equivalence.

We use similar notation as before when discussing members of $\Av(S;213,1121)$.  Note that a site of
$\pi_i \in \Av(S_i;213,1121)$ where $i>1$ corresponds either to the first or last position of
$\pi_i$ or to a position directly following any letter $x$ such that $\pi_i=\alpha x \beta$, where
$\beta$ is nonempty and $\min(\alpha\cup\{x\}) \geq \max(\beta)$.  Then inserting an $i$ into a site
of some $\pi_{i+1}$, not the last, destroys all sites to its right except for the last, whereas
inserting an $i$ at the end clearly adds a site.  Furthermore, if $a_i>1$ and
$\text{act}(\pi_{i+1})=\ell$, then there are $\ell-j$ offspring of $\pi_{i+1}$ having $j$ sites and
two runs of $i$ for each $j \in [2,\ell]$, where the second run is not at the end.  The offspring for
which there are two runs of $i$ where the second run occurs at the end have act values comprising
$[a_i+1,a_i+\ell-1]$, whereas those containing a single run of $i$ have values in
$[2,\ell]\cup\{a_i+\ell\}$. Combining these observations implies the second equivalence and completes
the proof.
\end{proof}

\begin{theorem}\label{123,1112}
We have $\{123,1112\}\csim\{132,1121\}$ and $\{123,1211\}\csim\{213,1211\}$.
\end{theorem}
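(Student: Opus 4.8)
The plan is to prove both equivalences by the active-site analysis developed in the preceding proofs of this subsection, reusing the same notation. In each case I fix a multiset $S=k^{a_k}\cdots 1^{a_1}$ and build the pattern-avoiding multipermutations one value at a time, forming $\pi_i\in\Av(S_i;\cdot)$ from $\pi_{i+1}$ by inserting the $a_i$ copies of the current smallest letter $t=i$, where $S_i=k^{a_k}\cdots i^{a_i}$. For each pattern set I define an appropriate notion of active site, determine the succession rule governing the number of sites of the offspring, and check that for the two sets in a given pair these rules---and hence the distribution of offspring by number of sites---agree. Induction on $i$ (decreasing), with the trivial single-run base case $i=k$, then yields the equivalence, exactly as in the earlier proofs. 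A useful preliminary observation common to both parts is that each of the word patterns $1112$, $1121$, $1211$ has three equal smallest entries, so an occurrence created by the inserted letters must use three copies of $t$; hence none of these word patterns imposes any restriction beyond the base pattern when $a_i\in\{1,2\}$.

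For $\{123,1112\}\csim\{132,1121\}$, the base patterns $123$ and $132$ are treated precisely as in Theorem~\ref{123,1121}: avoiding $123$ forces every copy of $t$ to the right of the rightmost ascent bottom of $\pi_{i+1}$, with $\text{act}$ recording the length of the final decreasing run, while avoiding $132$ forces insertion to the right of the rightmost descent top, with $\text{act}$ recording the final increasing run. By the preliminary observation, for $a_i\in\{1,2\}$ the two trees coincide exactly as the corresponding $\{123,1121\}$ and $\{132,1211\}$ trees do in Theorem~\ref{123,1121}. For $a_i\ge 3$ the word patterns bite: avoiding $1112$ means at most two copies of $t$ may appear before the last letter of $\pi_{i+1}$ (the rest going at the very end), while avoiding $1121$ forbids the configuration of two copies of $t$, a larger letter, and a further copy of $t$. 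I would translate each into a condition on how the run(s) of $t$ may be distributed among the available sites and verify that the resulting multisets of site-counts of the offspring match on the two sides.

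For $\{123,1211\}\csim\{213,1211\}$ the word pattern $1211$ is shared, so the two trees differ only through their base patterns. Avoiding $123$ again confines insertions to the right of the rightmost ascent bottom, whereas (as in the $\{213,1121\}$ analysis of Theorem~\ref{123,1121}) a $213$-site is the first or last position, or a position immediately after a letter $x$ in a factorization $\pi_{i+1}=\alpha x\beta$ with $\beta$ nonempty and $\min(\alpha\cup\{x\})\ge\max(\beta)$. For $a_i\le 2$ the argument reduces to the matching of $123$- and $213$-sites already carried out in Theorem~\ref{123,1121}; for $a_i\ge 3$ the common $1211$-restriction must be imposed within each of the two structurally different site configurations, and I would check that the offspring are again equidistributed by number of sites.

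The main obstacle in both parts is precisely the case $a_i\ge 3$. There the differing word patterns ($1112$ versus $1121$ in the first pair) or the differing region structure (the contiguous final run for $123$ versus the scattered sites for $213$ in the second pair) make the descriptions of the offspring look quite different, and the real work is the bookkeeping showing that, once $\text{act}$ is defined correctly on each side, the number of offspring with exactly $j$ sites agrees for every $j$. As before, I would organize this verification into the cases $a_i=1$, $a_i=2$ and $a_i\ge 3$, and complete the proof by induction on $i$.
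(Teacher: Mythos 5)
Your setup coincides with the paper's: the same letter-by-letter insertion scheme, the same definitions of active site for each of the four pattern sets, and the same case split on $a_i$. Your preliminary observation is also correct and genuinely useful: since $t$ is the minimal letter at the moment of insertion, any newly created occurrence of $1112$, $1121$ or $1211$ must use three copies of $t$, so for $a_i\le 2$ each pair reduces to the pure $123$-versus-$132$ (resp.\ $123$-versus-$213$) matching already carried out in Theorem~\ref{123,1121}. The problem is that everything past this point is deferred (``I would translate each into a condition\dots and verify'', ``I would check that the offspring are again equidistributed''). That verification for $a_i>1$ is not a routine afterthought to be waved at; it is the entire content of the theorem, and as written you have an outline rather than a proof.

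Concretely, what is missing is the derivation of the succession rules on each side and the check that they agree. For $\{123,1112\}$ with $a_i>1$ the paper splits on whether two copies of $i$ lie to the left of the rightmost letter of $\pi_{i+1}$ and obtains
$$\ell\rightsquigarrow a_i~(\ell-1),\quad a_i+j~(\ell-j)\ \text{ for } j\in[\ell-1],\quad a_i+\ell~(1),$$
with multiplicities in parentheses; a structurally different case analysis for $\{132,1121\}$ (splitting on the position of the second leftmost inserted $i$) must then be shown to yield the same distribution. For the second pair the common rule for $a_i>1$ is $\ell\rightsquigarrow 2~(\ell-1),\ a_i+\ell~(1),\ j~(\ell-j+2)$ for $j\in[3,\ell+1]$, using that $1211$ forces the inserted letters into at most two runs with the second a singleton. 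None of this bookkeeping appears in your write-up, and there is no a priori reason the two sides of each pair should match without doing it: for instance $\{123,1112\}$ and $\{123,1121\}$ agree for all steps with $a_i\le 2$ by exactly your observation, yet they are not $m$-equivalent (the paper's composition counts at $n=24$ are $133919$ versus $120344$). The $a_i\ge 3$ case is precisely where the classes separate, so the counts there must actually be carried out on both sides of each claimed equivalence.
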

\begin{proof}
We use the same notation as in the proof of Theorem \ref{1231121th} above and form permutations
$\pi_i \in \Av(S_i;123,1112)$ from $\pi_{i+1}\in \Av(S_{i+1};123,1112)$ by inserting $a_i$
letters $i$ and similarly for $\{132,1121\}$.  Note that sites of $\pi_{i+1}$ in the case of avoiding
$\{123,1112\}$ or $\{132,1121\}$ correspond to possible points of insertion to the right of the
rightmost ascent bottom or descent top, respectively.  Suppose $\text{act}(\pi_{i+1})=\ell$ in both
and we count sites in the offspring.  If $a_i=1$, then in either case the offspring have
$2,3,\ldots,\ell+1$ sites. If $a_i>1$, then for $\{123,1112\}$, we consider cases based on whether or
not there are two letters $i$ occurring in positions to the left of the rightmost letter in
$[i+1,k]$.  If so, then for each $j \in \{0,1,\ldots,\ell-2\}$, there are $\ell-1-j$ offspring that
have $a_i+j$ sites, upon inserting an $i$ in the $(\ell-1-j)$-th site from the left, inserting a
second $i$ anywhere to its left in any site and then adding the remaining $a_i-2$ letters $i$ at the
end of $\pi_{i+1}$.  If not, then at most a single $i$ occurs in a site other than the very last,
which results in offspring with act values in $[a_i+1,a_i+\ell]$.  Combining the previous cases, we
have for $a_i>1$,
$$\ell\rightsquigarrow a_i~(\ell-1),~a_i+j~(\ell-j) \text{ for } j \in [\ell-1],~a_i+\ell~(1),
\qquad \ell \geq 2,     \qquad(*)$$
where the multiplicities of the offspring (having the specified number of sites) are denoted in
parentheses.

On the other hand, if $\text{act}(\pi_{i+1})=\ell$ for $\pi_{i+1}\in \Av(S_{i+1};132,1121)$ and
$a_i>1$, we consider cases on the offspring $\pi_i$ based on whether or not two or more $i$'s occur
in positions prior to the rightmost letter in $[i+1,k]$.  If so, then $\pi_i$ cannot end in $i$ and
consider further cases on the position of the second leftmost inserted $i$.  If it occurs in the
$j$-th leftmost site of $\pi_{i+1}$ for $1 \leq j \leq \ell-1$, then there are $j-1$ offspring $\rho$
for which $\text{act}(\rho)=a_i+\ell-j$ and a single $\rho$ with $\text{act}(\rho)=a_i+\ell-j+1$.  If
there is at most one $i$ occurring prior to the rightmost letter in $[i+1,k]$, then it is seen that
there are $\ell-1$ possible offspring $\rho$ with $\text{act}(\rho)=a_i$ and a single $\rho$ with
$\text{act}(\rho)=a_i+1$.  Combining these cases demonstrates that the offspring of $\pi_{i+1}$ have
the same distribution of act values as those in (*) above when $a_i>1$, which implies the first
equivalence.

Similar reasoning applies to the second equivalence.  Note that sites in the case of avoiding
$\{213,1211\}$ correspond to positions  in the parent permutation $\pi$ wherein one can insert a
single $i$ without introducing $213$ and therefore correspond to the very beginning or end of $\pi$
or directly after $x$, where $x$ is such $\pi=AxB$ and $\min(A\cup\{x\}) \geq \max(B)$.  Further, the
$i$'s must occur as one or two runs for both pattern sets, where a second run is a single letter, for
otherwise $1211$ would arise.  For both sets of patterns, one may verify the following succession
rules:  $\ell \rightsquigarrow 2~(\ell-1),~a_i+\ell~(1),~j~(\ell-j+2) \text{ for } j \in [3,\ell+1]$
if $a_i>1$, with $\ell\rightsquigarrow 2,3,\ldots, \ell+1$ if $a_i=1$, which yields the second
equivalence.
\end{proof}

Combining the first equivalence of Theorem~\ref{123,1112} with a symmetric version of the
equivalence $\{123,1222\}\msim\{132,1222\}$ from Proposition~\ref{pro-ferrers} yields
$\{123,1112\}\msim\{132,1121\}\msim\{213,1112\}$.

\subsection{Equivalences involving doubly labeled offspring.}

To establish the equivalences in this subsection, it will be convenient to label the offspring by a
vector $(a,b)$ tracking two kinds of active sites.

\begin{theorem}\label{121,1243}
The following equivalences hold:  $\{121,1243\}\csim\{121,2143\}$,
$\{112,1243\}\csim\{112,2143\}$ and $\{121,1234\}\csim\{121,2134\}$.
\end{theorem}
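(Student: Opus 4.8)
The plan is to prove all three equivalences by the generating-tree method of this subsection, building the multipermutations of a fixed multiset $S=k^{a_1}\cdots 1^{a_k}$ by repeatedly inserting the current smallest letter, exactly as in the proof of Theorem~\ref{112,2212}. Writing $t=k-i$ and $S_i=k^{a_1}\cdots(k-i+1)^{a_i}$, I would pass from $\pi_i$ to $\pi_{i+1}$ by inserting the $a_{i+1}$ copies of $t$; since $t$ is a new minimum, avoidance of $121$ forces these copies to form a single run, while avoidance of $112$ forces all but at most one of them to the very end, so in each case the insertion is controlled by the position of a single distinguished copy of $t$. The new minimum can participate in an occurrence of the length-$4$ pattern only as its unique smallest letter ``$1$''; thus for $1243$ (resp.\ $1234$) a gap is forbidden exactly when the suffix of $\pi_i$ to its right contains a $132$ (resp.\ a $123$), whereas for $2143$ (resp.\ $2134$) a gap is forbidden exactly when $\pi_i$ has a letter before the gap together with a $32$ (resp.\ a $23$) after it lying above that letter, i.e.\ a $132$ (resp.\ $123$) straddling the gap.

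Because these two forbidden configurations differ in whether the whole length-$3$ residual pattern lies to the right of the inserted letter or straddles it, the bare number of active sites does not obey a clean succession rule, and this is where the doubly labeled offspring enter. For each partial permutation I would attach a pair $(a,b)$, where $a$ records the total number of active (legal) sites and $b$ records the number of those active sites whose suffix is weakly increasing --- equivalently, the sites lying to the right of the last descent, at which a newly inserted minimum cannot yet spawn a blocking $132$ or $123$. The heart of the argument is a single lemma asserting that the multiset of labels $(a',b')$ of the offspring of a parent labeled $(a,b)$ depends only on $(a,b)$ and on the exponent $a_{i+1}$, and that this dependence is \emph{identical} for the two pattern sets in each of the three pairs. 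Granting this, a routine induction on $i$ (the base case $i=1$, where $\pi_1=k^{a_1}$, being immediate) shows that the joint distribution of $(a,b)$ over $\Av(S_i;121,1243)$ agrees with that over $\Av(S_i;121,2143)$, and likewise for the other two pairs; taking $i=k$ and summing over all labels yields the claimed equalities of cardinalities.

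To prove the succession lemma I would treat $a_{i+1}=1$ and $a_{i+1}\ge 2$ separately. When $a_{i+1}=1$ a single copy of $t$ is inserted at a legal gap; each such insertion either splits an existing site or lengthens the increasing suffix, and one checks that the induced change to $(a,b)$ depends only on how far to the right of the last descent the insertion occurs --- a quantity already encoded by $b$ and $a-b$. When $a_{i+1}\ge 2$ the whole run (or, in the $112$ case, the lone interior copy together with the terminal block) is inserted, which destroys all active sites lying on one side of the insertion point while creating a controlled number of new sites at the end; the inserted copies now sit before whatever descent structure follows, so the increasing-suffix count is reset in a way that again depends only on the parent label.

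The main obstacle is precisely the correct choice of the second coordinate $b$: it must be fine enough that the $a_{i+1}\ge 2$ insertions --- which erase sites asymmetrically for $1243$ and $1234$ versus $2143$ and $2134$ --- produce matching label multisets, yet coarse enough to remain computable from $(a,b)$ alone. Once $b$ is pinned down, the verification that $\{121,1243\}$ and $\{121,2143\}$ share the rule, and separately that $\{121,1234\}$ and $\{121,2134\}$ do, reduces to a finite case analysis; the remaining equivalence $\{112,1243\}\csim\{112,2143\}$ follows the same template, with the single-run insertion replaced by the ``one interior copy, remaining copies at the end'' rule dictated by $112$.
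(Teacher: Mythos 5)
Your overall strategy is the paper's own: build the multipermutations by inserting successive new minima and track a two-part label refining the active-site count by the position of the rightmost descent. Your proposed second coordinate (sites to the right of the last descent, where the suffix is weakly increasing) is in fact the paper's $b$, and your $a$ is the paper's $a+b$, so the labeling is already essentially pinned down. The problem is that you never carry out the part of the argument that constitutes the proof. You state as a "lemma" that the multiset of offspring labels depends only on $(a,b)$ and $a_{i+1}$ and is identical for the two pattern sets in each pair, but you neither derive the succession rules nor verify that they coincide; you then explicitly concede that "the main obstacle is precisely the correct choice of the second coordinate $b$" and that the verification remains to be done "once $b$ is pinned down." That verification is the entire content of the theorem. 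The paper's proof consists of explicitly computing, for each pattern pair and each case ($a>0$ versus $a=0$, and for the $112$-pairs also $a_i=1$ versus $a_i>1$), rules such as
$$(a,b)\rightsquigarrow(1,b),(2,b),\ldots,(a,b),(a,a_i+b),(a+1,a_i+b-1),\ldots,(a+b-1,a_i+1),$$
and checking they agree for both members of the pair. Without these computations the induction has nothing to run on.

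There is also a concrete error in the one place where you do describe the dynamics. For $a_{i+1}=1$ you assert that inserting a single copy of $t$ "either splits an existing site or lengthens the increasing suffix," suggesting the site count only grows. For these length-$4$ patterns that is false: the newly inserted minimum can itself serve as the ``1'' of $1243$ (or the ``1'' of $2143$) in combination with the rightmost descent, so inserting a single $t$ into a site to the left of the rightmost descent top destroys all sites on one side of it (all sites to its left for $1243$, all primary sites to its right for $2143$). This asymmetric destruction is exactly why the single label of the earlier theorems is insufficient here and why the primary/secondary split is needed; a correct proof must account for it in both the $a_{i+1}=1$ and $a_{i+1}\ge 2$ cases. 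Finally, note that the paper handles the third equivalence by passing to complements ($\{212,1234\}\csim\{212,1243\}$) and inserting new maxima with the roles of the two site types interchanged; if you insist on treating $\{121,1234\}\csim\{121,2134\}$ directly you would need to redo the case analysis from scratch rather than reuse the rules from the first equivalence.
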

\begin{proof}
Let $S_i=k^{a_k}\cdots i^{a_i}$ for $1 \leq i \leq k$. For the first equivalence, we form $\pi_i
\in \Av(S_i;\alpha,\beta)$ from $\pi_{i+1}\in \Av(S_{i+1};\alpha,\beta)$ by inserting $a_i$
letters $i$ for $1 \leq i <k$, where $\{\alpha,\beta\}=\{121,1243\}$ or $\{121,2143\}$.  We first
consider $\{121,1243\}$.  Suppose $\rho \in \Av(S_i;121,1243)$ can be written as $\rho=\gamma xy
\gamma'$, where $x>y$ and $y\gamma'$ is (weakly) increasing, i.e., $x$ is the rightmost descent top
of $\rho$.  Label $\rho$ by $(a,b)$, where $a$ denotes the number of active sites to the left of $x$
and $b$ is the number to the right of $x$.  We will refer to the sites accounted for by $a$ and $b$
as \emph{primary} and \emph{secondary}, respectively.  Furthermore, we take $a=0$ if $\rho$ contains
no descents, i.e., if $\rho$ is increasing. Given $\pi_{i+1}\in \Av(S_{i+1};121,1243)$ with label
$(a,b)$, we determine the labels of its $a+b$ offspring $\rho$.  If $a>0$, then
$$(a,b)\rightsquigarrow(1,b),(2,b),\ldots,(a,b),(a,a_i+b),(a+1,a_i+b-1),\ldots,(a+b-1,a_i+1).$$

To establish this rule, first note that all $i$'s must be inserted as a single run in $\pi_{i+1}$.
Furthermore, if $xy$ represents the rightmost descent of $\pi_{i+1}$, then inserting one or more
$i$'s in the $j$-th site to the left of $x$ destroys all sites to the left.  The $j$-th primary site
itself is in essence preserved since the position directly following the final added $i$ is still
active.  Letting $j$ range over $[a]$ accounts for $(1,b),\ldots,(a,b)$.  Otherwise, the $i$'s are
added to the right of $x$, i.e., within the final increasing sequence of $\pi_{i+1}$.  In this case,
if the $i$'s are inserted within the $j$-th position to the right of $x$, where $1 \leq j \leq b$,
then the rightmost descent shifts to the right by $j-1$ positions and thus the first $j-1$ secondary
sites become primary.  Further, it is seen that there are now $a_i+b-j+1$ secondary sites.  Letting
$j$ vary over $[b]$ then accounts for $(a,a_i+b),(a+1,a_i+b-1),\ldots,(a+b-1,a_i+1)$.

On the other hand, if $a=0$, then $b=a_{i+1}+\cdots+a_k+1$, with $\pi_{i+1}$ increasing.  In this case,
if the run of $i$ is added directly following $z$ in $\pi_{i+1}$, then all positions to the left
(right) of $z$ are primary (secondary).  This implies the succession rule
$$(0,b)\rightsquigarrow(0,a_i+b),(1,a_i+b-1),\ldots,(b-1,a_i+1).$$
One may verify that the same succession rules for $(a,b)$ when $a>0$ and $a=0$ are followed when
avoiding $\{121,2143\}$.  Note that insertion of $i$ into the $j$-th site to the left of $x$ now
destroys all primary sites to its right (instead of to its left).

For the second equivalence, we make use of the same labels $(a,b)$ as before, but now in conjunction
with the relevant patterns.  If $a>0$, then we have
$$(a,b)\rightsquigarrow(b,a_i),(b+1,a_i),\ldots,(a+b-1,a_i),(a+b,a_i)~(b-1~\text{times}),(a+b-1,
a_i+1), \qquad a_i>1,$$
with
$$(a,b)\rightsquigarrow(1,b),(2,b),\ldots,(a,b),(a,b+1),(a+1,b),\ldots,(a+b-1,2), \qquad a_i=1.$$
If $a=0$, then $b=a_{i+1}+\cdots+a_k+1=k-i+1$ since $a_{i+1}=\cdots=a_k=1$ in order for $\pi_{i+1}$
to be increasing, for otherwise a $112$ would arise.  Thus, we have the rules
$$(0,b)=(0,k-i+1)\rightsquigarrow (b-1,a_i+1)~\text{and}~(b,a_i)~(b-1~\text{times}), \qquad a_i>1,$$
and
$$(0,b)=(0,k-i+1)\rightsquigarrow(0,b+1),(1,b),\ldots,(b-1,2), \qquad a_i=1.$$
One may verify that both pattern sets $\{112,1243\}$ and $\{112,2143\}$ obey the preceding
succession rules.

For the final equivalence, we show alternatively $\{212,1234\}\csim\{212,1243\}$.  We form $\pi_i
\in \Av(S_i;\alpha,\beta)$ from $\pi_{i-1} \in \Av(S_{i-1};\alpha,\beta)$ for $i>1$, where
$S_i=1^{b_1}\cdots i^{b_i}$ and $\{\alpha,\beta\}=\{212,1234\}$ or $\{212,1243\}$.  We use the
labels $(a,b)$, where $a$ and $b$ now denote the number of sites to the right of or to the left of
the leftmost ascent top $x$, respectively.  Sites of the former or of the latter kind will be
described as primary or secondary, respectively, and we take $a=0$ if a permutation is (weakly)
decreasing.  Note that, in the case of avoiding $\{212,1234\}$, a position to the right of $x$ is a
(primary) site if it occurs to the left of all letters $z$  playing the role of a $3$ in some
occurrence of $123$ and that all positions to the left of $x$ are (secondary) sites.  We have the
following succession rule if
$a>0$:$$(a,b)\rightsquigarrow(1,b),(2,b),\ldots,(a,b),(a+b+b_i-2,2),(a+b+b_i-3,3),\ldots,(a+b_i,b),(a
,b+b_i).$$
To see this, note that the first $a$ offspring of $\pi_{i-1}$ listed account for the case when the
$i$'s are placed in a primary site (which destroys all sites to the right).  If the letters $i$ are
placed at the beginning of $\pi_{i-1}$, then one gets $(a,b+b_i)$, whereas if the $i$'s are placed in
the $j$-th site from the left where $2 \leq j \leq b$, then one gets $(a+b+b_i-j,j)$ since in this
case the final $b-j$ secondary sites become primary.  This accounts for the remaining offspring.  If
$a=0$, then $b=b_1+\cdots+b_{i-1}+1$ and $\pi_{i-1}$ is decreasing, which implies
$$(0,b)\rightsquigarrow (0,b+b_i),(2,b+b_i-2),(3,b+b_i-3),\ldots,(b,b_i).$$
Note that $(0,b+b_i)$ accounts for the case when all $i$'s are added at the beginning, whereas
inserting the $i$'s directly after the $j$-th letter of $\pi_{i-1}$ from the left gives
$(j+1,b+b_i-j-1)$ for $1 \leq j \leq b-1$.  By similar arguments, one can show that $\{212,1243\}$
follows the same succession rules, which implies the third equivalence.
 \end{proof}

\subsection{Equivalence of \texorpdfstring{$\{132,2213\}$}{\{132,2213\}} and
\texorpdfstring{$\{213,1322\}$}{\{213,1322\}}.}

We apply an active site analysis in proving the equivalence of $\{132,2213\}$ and $\{213,1322\}$ for
multisets which requires a further enumeration of several classes of offspring arising from a parent
permutation having a fixed number $\ell$ of sites.

\begin{theorem}\label{132,2213}
 We have $\{132,2213\}\csim\{213,1322\}$.
\end{theorem}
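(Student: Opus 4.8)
The plan is to follow the active-site methodology developed throughout this section, now tracking two distinct families of sites as in the doubly-labeled framework of Theorem~\ref{121,1243}. Following the standard setup, I would fix a multiset $S=k^{a_k}\cdots 1^{a_1}$ with each $a_i\geq 1$, set $S_i=k^{a_k}\cdots i^{a_i}$, and form $\pi_i\in\Av(S_i;132,2213)$ from $\pi_{i+1}$ by inserting $a_i$ copies of the smallest new letter $t=i$, comparing against the analogous construction for $\Av(S_i;213,1322)$. The aim is to show that the number of members of each avoidance class having a prescribed label is identical for every $S_i$, by induction on $i$, so that taking $i=1$ yields $|\Av(S;132,2213)|=|\Av(S;213,1322)|$.

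The key step is choosing the right two-parameter label $(a,b)$ and verifying that both pattern sets obey the same succession rules. For $\{132,2213\}$, since the forbidden patterns involve three distinct values with a repeated small letter, I expect the relevant decomposition to hinge on the rightmost descent top (as in Theorems~\ref{121,1243} and~\ref{123,1112}), with $a$ counting ``primary'' sites on one side of this distinguished position and $b$ counting ``secondary'' sites on the other. The pattern $2213$ constrains how a run of $t$'s may be inserted to the left of the distinguished letter (a repeated small letter followed by an ascent to a larger one is forbidden), while $132$ constrains insertions interacting with the final increasing structure. I would first work out, for a parent $\pi_{i+1}$ with label $(a,b)$, exactly which insertion positions preserve avoidance and how each insertion transforms $(a,b)$ — treating separately the cases $a_i=1$, $a_i>1$, and the degenerate case $a=0$ where $\pi_{i+1}$ is (weakly) decreasing or increasing. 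Then I would repeat the analysis for $\{213,1322\}$, where the complementary roles of $132\leftrightarrow213$ and $2213\leftrightarrow1322$ should interchange ``left'' and ``right'' in the site-destruction behavior (as already observed in the $\{121,1243\}$ versus $\{121,2143\}$ comparison), and check that the resulting succession rules coincide.

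The main obstacle, as the theorem's preamble explicitly warns, will be the bookkeeping: this equivalence ``requires a further enumeration of several classes of offspring arising from a parent permutation having a fixed number $\ell$ of sites.'' Unlike the cleaner cases earlier in the section, here a single label component may fail to capture enough information, so offspring with the same crude site-count must be further stratified — likely by whether an offspring ends in $t$, or by how many copies of $t$ sit left of the distinguished position — and I expect the succession rule to record offspring \emph{with multiplicities} (as in the display $(*)$ in Theorem~\ref{123,1112}). The delicate part is ensuring that for $a_i>1$ the \emph{multiset} of labels produced (counting repetitions) matches exactly between the two pattern sets, not merely the set of labels. I would verify this by carefully enumerating, for each admissible insertion position of the leftmost $t$, the induced label and tallying coincident labels, then confirming the two tallies agree term by term; establishing this matching of labeled offspring distributions is precisely where the bulk of the casework lies, after which the induction closes routinely.
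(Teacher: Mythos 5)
There is a genuine gap: your proposal is a plan for a proof rather than a proof. Everything that constitutes the actual mathematical content of this equivalence --- the precise identification of what an active site is for each of the two pattern sets, the determination of which insertions of the $a_i$ copies of the new letter are legal, and the explicit enumeration of offspring by act value --- is deferred with phrases like ``I would work out,'' ``I expect,'' and ``I would verify.'' The paper's proof consists almost entirely of that deferred casework. Concretely, for $\{132,2213\}$ a site of $\pi_{i-1}$ is the initial position or a position directly after a letter $x$ with $\pi_{i-1}=AxB$, $\min(A\cup\{x\})\geq\max(B)$ and $A\cup\{x\}$ avoiding $221$; for $\{213,1322\}$ the sites lie within the initial increasing run subject to a $1322$ restriction involving repeated letters. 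Neither of these matches your guess that the distinguished position is the rightmost descent top. The proof then splits on the site indices $j\leq r$ of the two leftmost inserted letters (cases $j=r=1$; $j=1,r>1$; $j=r>1$; $1<j<r\leq\ell$) and produces multiplicities such as $\binom{b_i-s+\ell-4}{\ell-2}$ and $(\ell-1)\binom{b_i+\ell-3}{b_i-1}-(b_i-1)\binom{b_i+\ell-3}{b_i}-\binom{\ell}{2}$ for the number of offspring with each act value; the equivalence is established only because these tallies, after nontrivial binomial simplification, agree for the two pattern sets. Without carrying out this enumeration you have not shown anything.

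A secondary point: your structural guess is also off. The paper does not use a doubly-labeled $(a,b)$ scheme here; it uses a single act statistic $\ell$ but with offspring counted \emph{with multiplicities} given by the binomial expressions above (this is what the preamble means by ``a further enumeration of several classes of offspring''). Whether a clean two-parameter succession rule in the style of Theorem~\ref{121,1243} even exists for $\{132,2213\}$ versus $\{213,1322\}$ is not something you establish, and the left/right ``complementarity'' you invoke by analogy with $\{121,1243\}$ versus $\{121,2143\}$ does not obviously apply, since here the size-$3$ patterns themselves differ ($132$ versus $213$) rather than only the size-$4$ companions. To repair the proposal you would need to commit to one labeling scheme, derive the legal insertions from the patterns, and exhibit matching offspring distributions explicitly.
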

\begin{proof}
We show $|\Av(S;132,2213)|=|\Av(S;213,1322)|$, where $S=1^{b_1}\cdots k^{b_k}$.  Let $\pi_i$
denote a permutation of $S_i=1^{b_1}\cdots i^{b_i}$ for $1 \leq i \leq k$ avoiding either pattern
set.  We form all possible $\pi_i$ from $\pi_{i-1}$ for $i>1$ by inserting $b_i$ copies of $i$.
Suppose $\text{act}(\pi_{i-1})=\ell$ and we consider the act values of its offspring.  It is seen
that both pattern sets obey the succession rule $\ell \rightsquigarrow 2,\ldots,\ell+1$ if $b_i=1$;
henceforth, assume $b_i>1$.

We now consider the offspring $\pi_i\in \Av(S_i;132,2213)$ of $\pi_{i-1}\in
\Av(S_{i-1};132,2213)$.  First note a letter $i$ may be inserted into $\pi_{i-1}$ at the very
beginning or directly following $x$ such that $\pi_{i-1}=AxB$, where $\min(A\cup\{x\})\geq \max(B)$,
$A\cup\{x\}$ avoids $221$ and $B$ is possibly empty. Suppose that the sites of $\pi_{i-1}$ are
labeled $1$ to $\ell$, starting with the leftmost.  Let $j$ and $r$ denote respectively the numbers
of the sites in which the leftmost and second leftmost letter $i$ are inserted, where $1 \leq j \leq
r \leq \ell$.  To enumerate the offspring $\pi_i$, we consider several cases on $j$ and $r$.  First
suppose $j=r=1$, and let $s$ denote the number of additional letters $i$  inserted into this site,
where $0 \leq s \leq b_i-2$.  Note that only the beginning or positions coming directly after the
added $i$ are active in the offspring in this case, due to avoidance of $2213$.  Thus, there are
$\binom{b_i-s+\ell-4}{\ell-2}$ possible offspring $\rho$ with $\text{act}(\rho)=s+3$ for each $s$ if
$\ell \geq 2$, with a single offspring corresponding to $s=b_i-2$ if $\ell=1$.

If $j=1$ and $r>1$, then all possible offspring $\rho$ containing at least three runs of $i$ have
act value $2$.  To see this, note that any position of $\rho$ beyond the first letter in $[i-1]$ and
to the left of the rightmost added $i$ cannot be a site (due to $132$), while any position to the
right of, but not directly following, the last $i$ in the second run of $i$ isn't a site either (due
to $2213$).  Thus, the only possible sites are directly before or after the initial $i$.  If $\rho$
has only two runs, then the position directly following the last $i$ is also a site so that $\rho$
has three sites in this case.  Thus, there are $\binom{b_i-2+\ell-r}{\ell-r}-1$ possible $\rho$ with
$\text{act}(\rho)=2$, and one with $\text{act}(\rho)=3$.  Now suppose $j=r>1$.  By similar reasoning
as before, we have $\text{act}(\rho)=1$ for which there are at least two runs, as only the initial
position can then be a site.  If there is a single run of $i$ in $\rho$, then $\text{act}(\rho)=2$ in
this case since there is also a site directly following the last $i$.  Thus, there are
$\binom{b_i-2+\ell-r}{\ell-r}-1$ possible $\rho$ with $\text{act}(\rho)=1$, and one with
$\text{act}(\rho)=2$.  A similar analysis reveals that this is also the case for all $j$ and $r$ such
that $1 < j < r \leq \ell$.

We now determine the number of offspring corresponding to each act value.  Combining the prior
cases, and considering all possible $j$ for each $r$ in the last case, we have that the total number
of $\rho$ for which $\text{act}(\rho)=1$ is given by
$$\sum_{r=2}^\ell
(r-1)\left(\binom{b_i-2+\ell-r}{\ell-r}-1\right)=\sum_{r=0}^{\ell-2}(\ell-r-1)\binom{b_i-2+r}{r}
-\binom{\ell}{2}.$$
Simplifying further, and making use of the upper summation formula $\sum_{k=m}^n\binom{k}{m}=\binom{n+1}{m+1}$ (see,
e.g., \cite[p.~174]{GKP}), gives
\begin{align*}
&(\ell-1)\binom{b_i+\ell-3}{b_i-1}-\binom{\ell}{2}-\sum_{r=0}^{\ell-2}r\binom{b_i-2+r}{b_i-2}\\
&=(\ell-1)\binom{b_i+\ell-3}{b_i-1}-\binom{\ell}{2}-(b_i-1)\sum_{r=1}^{\ell-2}\binom{b_i-2+r}{b_i-1}\\
&=(\ell-1)\binom{b_i+\ell-3}{b_i-1}-(b_i-1)\binom{b_i+\ell-3}{b_i}-\binom{\ell}{2}.
\end{align*}
Also, combining the various cases above implies that the number of $\rho$ for which
$\text{act}(\rho)=2$ is given by
$$\ell-1+\binom{\ell-1}{2}+\sum_{r=2}^{\ell}\left(\binom{b_i-2+\ell-r}{\ell-r}-1\right)=\binom{
b_i+\ell-3}{b_i-1}+\binom{\ell-1}{2},$$
and the number for which $\text{act}(\rho)=3$ by $\binom{b_i+\ell-4}{\ell-2}+\ell-1$, assuming $\ell
\geq 2$.  Further, if $b_i \geq 3$, then the first case above yields $\binom{b_i-j+\ell-1}{\ell-2}$
possible $\rho$ such that $\text{act}(\rho)=j$ for $4 \leq j \leq b_i+1$.  Therefore, if $\ell, b_i
\geq 2$, we have the following succession rules:
$$\ell \rightsquigarrow\begin{cases}
{\displaystyle 1~~\left((\ell-1)\binom{b_i+\ell-3}{b_i-1}-(b_i-1)\binom{b_i+\ell-3}{b_i}-\binom{\ell}{2}\right)},\\
{\displaystyle 2~~\left(\binom{b_i+\ell-3}{b_i-1}+\binom{\ell-1}{2}\right)},\\
{\displaystyle 3~~\left(\binom{b_i+\ell-4}{b_i-2}+\ell-1\right)},\\
{\displaystyle j\in[4,b_i+1]~~\left(\binom{b_i+\ell-j-1}{b_i-j+1}\right)},
\end{cases}$$
where the number of offspring corresponding to the given act value is given parenthetically.  Note
that the last case does not apply if $b_i=2$.  Also, from the first case above, we have $1
\rightsquigarrow b_i+1$ for all $b_i \geq 2$.

We now consider $\{213,1322\}$ and form $\pi_i \in \Av(S_i;213,1322)$ from $\pi_{i-1}\in
\Av(S_{i-1};213,1322)$.  First note that a site of $\pi_{i-1}$ is a position, including at
the very beginning, that lies within the initial increasing run with the restriction that inserting
$i$ does not introduce $1322$.  To elucidate, suppose $\pi_{i-1}=a_1\cdots a_m \pi'$, where
$a_1\leq\cdots \leq  a_{m-1}$ and $a_{m-1}>a_m$.  Note that any letter for which there are at least
two occurrences to the right of $a_{m-1}$ cannot belong to $[a_1+1,a_{m-1}-1]$, for otherwise
$\pi_{i-1}$ would contain 1322 of the form $a_1a_mxx$.  If some letter in $[a_{m-1}+1,i-1]$ is
repeated, then $\text{act}(\pi_{i-1})=1$, so assume that this is not the case.  On the other hand,
suppose that a letter $a_i=s>a_1$ where $i<m$ occurs at least twice. For each such $s$, define $p=p_s
\geq 2$ such that $s=a_p$, with $p$ the second-to-largest possible index.  Consider the value of $s$,
say $s^*$, for which $p$ is maximal and denote this particular $p$ by $p^*$.  Then any position to
the left of $a_{p^*}$ except for the beginning is eliminated as a possible site. Thus, the sites of
$\pi_{i-1}$ other than the beginning must comprise a set of consecutive positions, with this set
nonempty if $p^*\leq m-1$.

Let $j$ and $r$ be as before, but now in conjunction with $\{213,1322\}$, and we consider the same
cases.  If $j=r=1$, then similar reasoning yields the same result as before in this case.  If $j=1$
and $r>1$, then each position of offspring $\rho$ beyond the second letter is eliminated as a site
due to 213, with the first two positions active.  This gives $\binom{b_i-2+\ell-r}{b_i-2}$ possible
$\rho$ for every $r$, each with $\text{act}(\rho)=2$.  For the last two cases, assume for now $b_i
\geq 3$.  If $j=r>1$, then $\text{act}(\rho)=3$ if offspring $\rho$ possesses a single run of $i$,
since in this case the positions directly following the final two $i$'s are active in addition to the
initial site.  If the $i$'s occur as two runs in $\rho$, with the second run of length one, of which
there are $\ell-r$ possible $\rho$, then $\text{act}(\rho)=2$ as the only non-initial site directly
follows the penultimate added letter $i$.  For all other $\rho$
($\binom{b_i-2+\ell-r}{b_i-2}-(\ell-r+1)$ possibilities), we have $\text{act}(\rho)=1$ since all
positions beyond the first letter prior to the second-to-last added $i$ are eliminated as sites (due
to $1322$) as are all positions to the right of, but not directly following, the first run of $i$
(due to $213$).  If $1 < j <r \leq \ell$, we similarly get $\binom{b_i-2+\ell-r}{b_i-2}$ possible
$\rho$ each with $\text{act}(\rho)=1$ since $b_i\geq 3$.

Combining the previous cases yields the same succession rules with regard to avoiding the pattern
set $\{213,1322\}$ when $b_i \geq 3$ as those given above for $\{132,2213\}$.  Note that if $b_i=2$,
then one gets a single $\rho$ with $\text{act}(\rho)=3$ for each $r$ in the $j=r>1$ case and
$\text{act}(\rho)=2$ for each  $1<j<r$.  Thus when $b_i=2$ and $\ell \geq 2$, one gets $\ell
\rightsquigarrow 1~(0),~2~(\binom{\ell}{2}),~3~(\ell)$, with the indicated multiplicities, which is
in accord with the prior formula when $b_i=2$.  Finally, for all $b_i \geq 2$, we have
$1\rightsquigarrow b_i+1$.  Comparing the various successions rules above implies the desired result.
\end{proof}

\subsection{Equivalences by variations of the Simion-Schmidt correspondence.}

\begin{theorem}\label{123,1222}
 We have $\{123,1322\}\csim\{132,1223\}$.
\end{theorem}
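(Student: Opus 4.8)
The plan is to establish the equivalence by constructing, for each multiset $S$, an explicit bijection $\Av(S;123,1322)\to\Av(S;132,1223)$ that generalizes the Simion--Schmidt correspondence between $123$- and $132$-avoiders. The guiding observation is that both pairs are direct sums of a reverse-pair: $123=1\oplus 12$ and $1322=1\oplus 211$, while $132=1\oplus 21$ and $1223=1\oplus 112$, and $\{21,112\}$ is precisely the reversal of $\{12,211\}$. Concretely, a word $w$ avoids $\{123,1322\}$ if and only if the word $w'$ obtained by deleting all copies of the minimal letter avoids the same pair \emph{and} after every copy of the minimal letter the strictly larger letters to its right avoid $\{12,211\}$ (equivalently, form a weakly decreasing sequence in which only the maximum value repeats); avoiding $\{132,1223\}$ is the identical statement with $\{12,211\}$ replaced by its reversal $\{21,112\}$ (weakly increasing, only the maximum repeating). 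This is the exact sense in which the target pair is the ``reverse'' of the source pair, and it is what a Simion--Schmidt-style reversal of the upper letters should exploit.

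I would then build the bijection by induction on the number of distinct letters, peeling off all $a_1$ copies of the minimum value. Deleting them from $w$ yields a word $w'$ on $\{2,\dots,k\}$ still avoiding $\{123,1322\}$; apply the bijection for the smaller alphabet to get $g(w')$, and reinsert the $a_1$ minimal letters. By the local characterization above, the admissible insertion gaps are exactly those after which the larger letters avoid $\{12,211\}$ on the source side (resp. $\{21,112\}$ on the target side). Since a suffix of a $\{12,211\}$-avoiding word is again $\{12,211\}$-avoiding, the admissible gaps form a final segment of all gaps; their number is $N+1$, where $N$ is the length of the maximal $\{12,211\}$-avoiding suffix of $w'$ (resp. the maximal $\{21,112\}$-avoiding suffix of $g(w')$). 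As multiplicities are preserved, the number of ways to distribute the $a_1$ indistinguishable minimal letters into these gaps is $\binom{N+a_1}{a_1}$, so matching the two final gap-segments of equal length and placing each inserted letter accordingly gives a bijection, provided the two lengths agree; equality of the lengths is furnished by the inductive hypothesis applied to $g$.

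The main obstacle, and the reason the classical Simion--Schmidt map does not transfer verbatim, is that repeated letters destroy the property that a pattern-avoider is determined by its left-to-right minima: already on two letters the words $223$ and $232$ both lie in every relevant class yet admit different numbers of legal insertions of a smaller letter. To make the induction close one must therefore strengthen the inductive statement so that $g$ not only maps $\Av(\cdot;123,1322)$ onto $\Av(\cdot;132,1223)$ but also carries the statistic ``length of the maximal $\{12,211\}$-avoiding suffix'' to the statistic ``length of the maximal $\{21,112\}$-avoiding suffix.'' This is what guarantees that the source and target gap-segments have equal length at the next step. The delicate point is that the naive order-preserving matching of gaps need \emph{not} preserve this auxiliary statistic once the inserted block of minimal letters is itself part of the new suffix, so the reinsertion rule must be designed (and verified) precisely so as to control how a block of minimal letters changes the maximal reverse-pair-avoiding suffix on each side. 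Once this statistic-preserving reinsertion is in place, bijectivity and the correct avoidance of the image follow directly from the construction, and the base case of a one-letter alphabet is immediate.
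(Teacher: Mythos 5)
Your reduction is sound as far as it goes: the local characterization (an occurrence of $123$ or $1322$ either lives entirely among the non-minimal letters or has a minimal letter as its ``1'', in which case the strictly larger letters to its right contain $12$ or $211$), the observation that the admissible gaps form a final segment of length $N+1$ where $N$ is the length of the maximal $\{12,211\}$-avoiding (resp.\ $\{21,112\}$-avoiding) suffix, and the recognition that the induction only closes if the bijection $g$ carries one suffix statistic to the other. But the proposal stops exactly where the proof has to begin. You write that ``the reinsertion rule must be designed (and verified) precisely so as to control how a block of minimal letters changes the maximal reverse-pair-avoiding suffix on each side'' --- and then you neither design nor verify it. This is not a routine detail: the two statistics evolve under insertion according to genuinely different-looking rules. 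On the $\{123,1322\}$ side, a $\{12,211\}$-avoiding suffix is weakly decreasing with only its maximum repeated, so a block of $t\ge 2$ new minima placed in the very last gap becomes the entire new avoiding suffix (new statistic exactly $t$), whereas a single trailing minimum can be absorbed into a longer decreasing suffix; on the $\{132,1223\}$ side the avoiding suffix is weakly increasing, so inserted minima can only sit at the \emph{front} of the new suffix and at most one of them survives if any larger letter follows. Showing that these two succession rules nonetheless produce equidistributed statistics (refined by the multiset, so that the induction can continue) is the entire content of the theorem, and without it the argument is incomplete. It is plausible that this can be pushed through --- it would amount to a generating-tree argument in the style of the paper's Theorems~\ref{112,2212}--\ref{132,2213} --- but as written there is a genuine gap.

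For comparison, the paper takes a different, non-inductive route: it extends the Simion--Schmidt correspondence globally, decomposing a $132$-avoider by its left-to-right minima, colouring letters red and blue, and rewriting the red subsequence in decreasing order; the work then consists of characterizing $1223$-avoidance (resp.\ $1322$-avoidance) purely in terms of which values may be repeated among the red letters, a condition visibly preserved by the recolouring map. That approach sidesteps the need to track any suffix statistic through an induction, which is precisely the step your proposal leaves open.
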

\begin{proof}
Let $S=1^{b_1}\cdots k^{b_k}$. Consider a natural extension of the Simion-Schmidt correspondence
\cite{SS} which will establish $|\Av(S;132)|=|\Av(S;123)|$ that is defined as follows.  Let
$\ell_1>\cdots>\ell_r=1$ denote the set of left-right minima (lr min) of $\lambda \in \Av(S;132)$.
Then we may write $\lambda=\ell_1\lambda^{(1)}\cdots \ell_r\lambda^{(r)}$, where there is no $\ell_i$
in the section $\lambda^{(j)}$ for $1 \leq j <i$.  A letter belonging to some $\lambda^{(i)}$ that is
not equal to $\ell_i$ will be described as \emph{red}, with all other letters of $\lambda$ being
\emph{blue}.  Note that  one or more copies of an lr min $\ell_j$ is red if $\ell_j \in
\lambda^{(i)}$ for some $i>j$, with all other copies of $\ell_j$ blue (including the leftmost).
Also, $\lambda$ avoiding $132$ implies that the red letters occurring in each $\lambda^{(i)}$ are
increasing.  Let $\lambda'$ be obtained from $\lambda$ by considering the subsequence $S$ of
$\lambda$ comprising all of its red letters and rewriting the entries of $S$ in decreasing order,
leaving the blue letters unchanged in their positions.  One may verify that the mapping $\lambda
\mapsto \lambda'$ is a bijection.  Note that if an lr min $x$ of $\lambda$ occurs as a blue letter
with multiplicity $p \geq 1$ and as a red letter with multiplicity $s \geq 0$, then $x$ must occur as
blue and red letters with the same multiplicities in $\lambda'$.

To show $\{123,1322\}\csim\{132,1223\}$, we first make the following further definition.  Given $1 <
i \leq r$, let $\ell_i^*$ denote the largest member of $[\ell_i+1,\ell_{i-1}]$ occurring to the right
of the leftmost occurrence of $\ell_i$, with $\ell_1^*=k$ if $\ell_1<k$.  Note that $\ell_i^*$ need
not exist if $\ell_{i-1}=\ell_{i}+1$.  If $\pi \in \Av(S;132)$, consider for each $\ell_i$ the set
of elements in $[\ell_i+1,k]$ for which there is at least one letter occurring to the right of the
leftmost $\ell_i$ in $\pi$.  One may verify that $\pi$ avoiding the pattern $1223$ is equivalent to
the condition that the only red letters of $\pi$ that may occur more than once are those equal to
$\ell_j^*$ for some $j \in [r]$.  Note that the subset of $[k]$ comprising those members that
correspond to $\ell_j^*$ for some $j$ is the same for $\pi$ as it is for $\pi'$.  To see this,
consider cases on whether $\ell_i^*=\ell_{i-1}$ or $\ell_i^*<\ell_{i-1}$ for each $i>1$, with the
letter $k$ always possibly repeated as a red letter assuming $k>1$.  Since the multiplicity of each
red and blue letter is preserved by the mapping $\pi \mapsto \pi'$, the latter condition above is
equivalent to $\pi'$ avoiding $1322$, which implies the result.
\end{proof}

\noindent\emph{Remark:}
The proof of Theorem \ref{123,1222} may be generalized to show
$\{123,132^r\}\csim\{132,12^r3\}$ for all $r \geq 1$.

\begin{theorem}\label{121,1233}
 We have
$\{121,1233\}\csim\{121,2133\}$ and $\{112,1233\}\csim\{112,2133\}$.
\end{theorem}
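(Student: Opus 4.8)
The plan is to establish each of the two displayed $m$-equivalences by constructing, for every fixed multiset $S$, a content-preserving bijection between $\Av(S;121,1233)$ and $\Av(S;121,2133)$, and likewise between $\Av(S;112,1233)$ and $\Av(S;112,2133)$; since $\csim$ requires only equinumerosity for each $S$, this suffices. The organizing principle is that the two length-four patterns differ only in the order of their two smallest letters: since $1233=12\oplus 11$ and $2133=21\oplus 11$, an occurrence of $1233$ in a word $w$ is exactly an ascent $w_i<w_j$ (with $i<j$) lying below some value $c$ that occurs at least twice to the right of position $j$, whereas an occurrence of $2133$ is exactly a descent $w_i>w_j$ lying below such a twice-repeated value $c$. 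Hence, on the common domain of $121$-avoiding (resp.\ $112$-avoiding) words, the $1233$-avoiders are precisely those in which the letters sitting below each repeated value form weakly decreasing runs, and the $2133$-avoiders are those in which the corresponding runs are weakly increasing. This increasing/decreasing dichotomy is exactly what a Simion--Schmidt-type reversal is built to interchange, which is why the argument belongs to this subsection.

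Concretely, I would mark, in the spirit of the red/blue colouring used in the proof of Theorem~\ref{123,1222}, the canonical subsequence of ``small'' letters that lie below a repeated larger value and precede its repetitions, and then reverse the order of this subsequence while leaving every other letter fixed in place. By the characterization above, reversing each such monotone block turns a weakly decreasing arrangement below a repeat into a weakly increasing one, so the map carries $\Av(S;121,1233)$ into $\Av(S;121,2133)$; it is visibly content-preserving, and reversing the same subsequence again recovers the original word, so it is a bijection. The $112$-versions are handled by the identical scheme applied to the structural normal form of a $\{112,1233\}$-avoider: here $112$-avoidance pushes repetitions toward the end (once a value occurs twice, the whole suffix is bounded by it), so once more the small letters preceding each repeated larger value form monotone runs that may be reversed, exchanging $1233$-avoidance for $2133$-avoidance while preserving avoidance of $112$.

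The main obstacle is the global bookkeeping needed to make these reversals consistent. Because the constraint attached to a repeated value $c$ refers to all letters below $c$ that precede its copies---letters which may live far to the left of $c$ and which are themselves governed by the constraints of other repeated values---the subsequences to be reversed for different values of $c$ overlap, and for a value of multiplicity three or more the relevant copies are the last two rather than the first two. The crux is therefore to choose the marked subsequence so that a single reversal simultaneously repairs every potential $1233$ into a forbidden-free configuration without creating any $2133$ outside it, and to verify that reversing it neither destroys avoidance of the length-three pattern $121$ (resp.\ $112$) nor introduces a new occurrence of it. The recursive block structure forced by $121$-avoidance (the copies of the maximal letter split the word into gaps, with each smaller value confined to one gap) and the analogous suffix structure forced by $112$-avoidance are what render the colouring well defined and the reversals independent enough to compose cleanly; checking these invariants run by run is the technical heart of the proof, after which letting $S$ range over all multisets yields both equivalences.
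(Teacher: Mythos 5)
Your structural characterization of the patterns (an occurrence of $1233$ is an ascent below a value repeated twice further right, an occurrence of $2133$ a descent below such a value) is correct and is indeed the engine of the paper's argument, and your instinct to use a Simion--Schmidt-type reversal of the ``small'' letters is the right general strategy. However, the proposal stops short of an actual proof at exactly the point where the work lies, and the one concrete construction you do suggest does not survive scrutiny. You propose to mark a single canonical subsequence and ``reverse the order of this subsequence while leaving every other letter fixed in place.'' A position-wise reversal of this kind can destroy $121$-avoidance: two equal small letters that originally sat inside the same gap between large letters (and hence were adjacent, as $121$-avoidance forces the small-letter runs in distinct gaps to have disjoint value sets) can land in different gaps after the reversal, since the gaps generally have unequal sizes; a letter exceeding their common value then lies between them and a $121$ is created. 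The paper's map is deliberately \emph{not} an in-place subsequence reversal: it decomposes the relevant prefix as $\alpha_1\beta_1\cdots\alpha_r\beta_r$ and redistributes the intact reversed runs $\text{rev}(\alpha_r),\dotsc,\text{rev}(\alpha_1)$ among the gaps, changing the gap sizes, precisely so that equal letters stay together.

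The second, larger gap is the one you yourself flag as ``the crux'' and then defer: the constraints attached to different repeated values $c$ are nested, and it is not established (and not obviously true) that a single global reversal satisfies all of them simultaneously. The paper resolves this with an iterative procedure, treating the repeated values $a_1>a_2>\cdots$ in decreasing order; crucially, the rearrangement at step $i\ge 2$ is \emph{not} the same operation as at step $1$, because the prefix already processed is now increasing while the newly exposed portion is still decreasing, and the two must be interleaved in a specific order (this is the case analysis in the definition of $S_2'$). Finally, invertibility cannot be dismissed with ``reversing the same subsequence again recovers the original word'': the marked subsequence is defined in terms of the word itself, so one must show that the image determines the same sequence of distinguished values; the paper does this by proving $a_i=a_i'$ for all $i$ and then undoing the steps from last to first. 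As written, your proposal is a plausible plan plus an acknowledgment that the hard part remains, rather than a proof.
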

\begin{proof}
We first define a bijection between $\Av(S;121,1233)$ and $\Av(S;121,2133)$, where
$S=1^{b_1}\cdots k^{b_k}$. Let $\pi \in \Av(S;121,1233)$.  We represent $\pi=\pi_1\pi_2\cdots
\pi_m$ as a $k$-ary word, where $m=b_1+\cdots+b_k$.  Let $a_1$ denote the largest $i \in [k]$ for
which $b_i>1$, assuming it exists, and let $i_1$ be the index $j \in [m]$ such that $\pi_j$
corresponds to the second rightmost occurrence of the letter $a_1$ in $\pi$.  Consider the
decomposition of the initial section
$$S_1=\pi_1\pi_2\cdots \pi_{i_1-1}=\alpha_1\beta_1\cdots \alpha_r\beta_r,$$
where $r \geq 1$ and $\alpha_i$ and $\beta_j$ denote runs of letters in $[a_1-1]$ and $[a_1,k]$,
respectively, with all $\alpha_i$ and $\beta_j$ nonempty except for possibly $\alpha_1$ and
$\beta_r$.  Note that all letters in $S_1$ belonging to $[a_1-1]$ are (weakly) decreasing since $\pi$
avoids $1233$.  We replace $S_1$ with $S_1'$ within $\pi_1=\pi$ to obtain $\pi_2$, where
$$S_1'=\begin{cases}
{\displaystyle
\text{rev}(\alpha_r)\beta_1\text{rev}(\alpha_{r-1})\beta_2\cdots\text{rev}(\alpha_1)\beta_r},
&\emph{if}~\alpha_1\neq\varnothing;\\
{\displaystyle
\beta_1\text{rev}(\alpha_r)\beta_2\text{rev}(\alpha_{r-1})\cdots\beta_{r-1}\text{rev}
(\alpha_2)\beta_r}, &\emph{if}~\alpha_1=\varnothing.\\
\end{cases}$$
Observe that $\pi_2$ contains no $2133$ in which the role of $3$ is played by $a_1$ since the
entries in $\cup_{i=1}^r\alpha_i$ are decreasing.  Note further that $\pi_1$ avoiding $121$ implies
that the $\alpha_i$ are pairwise disjoint and thus $\pi_2$ avoids $121$ as well.

We now consider the largest letter in $\pi_2$ that can play the role of a $3$  in a possible
occurrence of 2133.  Let $\pi_2=\pi_1^{(2)}\pi_2^{(2)}\cdots\pi_m^{(2)}$, as a word.  Let $a_2$, if
it exists, be the largest element of $[a_1-1]$ that is repeated whose second rightmost occurrence
corresponds to $\pi_\ell^{(2)}$ for some $\ell>i_1$.  We will denote this index $\ell$ by $i_2$. Note
that $\pi_2$ avoiding $121$ implies all letters $a_2$ occur to the right of the second rightmost
$a_1$.  Consider the decomposition of the initial section
$$S_2=\pi_1^{(2)}\pi_2^{(2)}\cdots \pi_{i_2-1}^{(2)}=\alpha_1^{2}\beta_1^{(2)}\cdots
\alpha_s^{(2)}\beta_s^{(2)},$$
where $s \geq 1$ and $\alpha_i^{(2)}$ and $\beta_j^{(2)}$ denote runs of letters in $[a_2-1]$ and
$[a_2,k]$, respectively, with only $\alpha_1^{(2)}$ and $\beta_s^{(2)}$ possibly empty.  Suppose that
the second rightmost $a_1$ occurs in $\beta_t^{(2)}$ for some $t \in [s]$.  Then the entries in
$\cup_{i=1}^t\alpha_i^{(2)}$ are (weakly) increasing via the first step of the algorithm above,
whereas those in $\cup_{i=t+1}^s\alpha_i^{(2)}$ are decreasing, with either set of entries possibly
empty (which always is the case if $s=t$ or if $s=1$, with $\alpha_1^{(2)}$ empty).  Note further
that $\pi_1$ avoiding $1233$ implies that the maximum of the set $\cup_{i=t+1}^s\alpha_i^{(2)}$ is
less than or equal the minimum of  $\cup_{i=1}^t\alpha_i^{(2)}$ assuming both sets are nonempty and
indeed this inequality is strict since $\pi_2$ avoids $121$.

Define $S_2'$ by
$$S_2'=\begin{cases}
{\displaystyle
\text{rev}(\alpha_s^{(2)})\beta_1^{(2)}\cdots\text{rev}(\alpha_{t+1}^{(2)})\beta_{s-t}^{(2)}
\alpha_1^{(2)}\beta_{s-t+1}^{(2)}\cdots \alpha_t^{(2)}\beta_s^{(2)}},
&\emph{if}~t<s~\emph{and}~\alpha_1^{(2)}\neq\varnothing;\\
{\displaystyle
\beta_1^{(2)}\text{rev}(\alpha_s^{(2)})\cdots\beta_{s-t}^{(2)}\text{rev}(\alpha_{t+1}^{(2)})
\beta_{s-t+1}^{(2)}\alpha_2^{(2)}\cdots\beta_{s-1}^{(2)}\alpha_t^{(2)}\beta_s^{(2)}},
&\emph{if}~t<s~\emph{and}~\alpha_1^{(2)}=\varnothing;\\
{\displaystyle S_2}, &\emph{if}~t=s.
\end{cases}$$
We replace $S_2$ by $S_2'$ in $\pi_2$, denoting the resulting multipermutation by $\pi_3$.  Note
that $\pi_3$ contains no $2133$ in which the role of 3 is played by a member of $[a_2,k]$, by the
maximality of $a_1$ and $a_2$.  Furthermore, since complete runs of letters belonging to $[a_2-1]$
are shifted in the transition from $\pi_2$ to $\pi_3$, no occurrence of $121$ is introduced.

Repeat the procedure above for $\pi_3=\pi_1^{(3)}\pi_2^{(3)}\cdots\pi_m^{(3)}$, considering for the
largest member $a_3$ of $[a_2-1]$ for which there is a letter occurring at least twice to the right
of $\pi_{i_2}^{(3)}$.  Continue the procedure until one produces $\pi_r$ for which there is no letter
in $[a_{r-1}-1]$ occurring to the right of the $(i_{r-1})$-st entry that is repeated.  Note that this
process must terminate since $a_{i+1}<a_i$ for all $i$.  One may verify $\pi_r \in \Av(S;121,2133)$.

Define $f\colon\Av(S;121,1233)\rightarrow \Av(S;121,2133)$ by setting $f(\pi)=\pi_r$ if $r>1$,
with $f(\pi)=\pi$ if $\pi$ contains no repeated letters.

To show that $f$ is reversible, first let $a_1'$ denote the largest repeated letter of $f(\pi)$.
Then let $a_i'$ be obtained from $a_{i-1}'$ for $i>1$ by considering the largest member of
$[a_{i-1}'-1]$ for which there is a repeated letter occurring to the right of the second rightmost
$a_{i-1}'$ in $f(\pi)$.  One can show that $a_i=a_i'$ for $1 \leq i <r$, as the relative order of the
penultimate occurrences of the $a_i$ is unchanged in each step of the process in the transformation
from $\pi_1$ to $\pi_r$ (though the exact positions of these occurrences might change).  Thus, $f$
may be reversed as follows.  First undo the last step of the above procedure starting with $\pi_r$
upon considering the position of the penultimate $a_{r-1}$.  Note that $a_{r-1}$ is actually the
rightmost letter occurring at least twice in $\pi_r$.  Then undo the second-to-last step considering
$a_{r-2}$ in $\pi_{r-1}$ and so on, successively, until one obtains $\pi_2$.  Inverting the mapping
described in the first paragraph above then recovers $\pi_1=\pi$ from $\pi_2$.

The bijection $f$ also applies to $\{112,1233\}\csim\{112,2133\}$.  Note that there is no $112$
introduced in going from $\pi_{i-1}$ to $\pi_i$ for all $i$ since the letters to the left of
$a_{i-1}$ belonging to $[a_{i-1}-1]$ are seen to be distinct, upon proceeding inductively (the $i=1$
case, by assumption).  Also, one may obtain $\pi_i$ from $\pi_{i-1}$ in the proof of this equivalence
simply by reversing the order of the terms in the subsequence of $\pi_{i-1}$ comprising all members
of $[a_{i-1}-1]$ to the left of the penultimate $a_{i-1}$ letter since we need not avoid $121$ in
this case.
\end{proof}

\section{Concluding remarks}

In Table \ref{tabnontrivialw} below is a list of the members of the non-singleton $(3,4)$ Wilf-equivalence classes for compositions, up to symmetry (i.e., reversal), along with their respective theorem references.  Note that $m$-equivalence of two sets of patterns clearly implies the equivalence of the sets with respect to avoidance in compositions. \pagebreak

\begin{table}[htp]
\begin{tabular}{|p{4.15cm}|p{2.6cm}||p{4.15cm}|p{3.6cm}|} \hline
  $\{\sigma,\tau\}$&Reference&$\{\sigma,\tau\}$&Reference\\\hline\hline
$\{111,1221\}$, $\{111,2112\}$&Thm. \ref{teo1}&
$\{111,1123\}$, $\{111,1132\}$&(c) Prop. \ref{pro-ferrers}\\\hline
$\{111,1223\}$, $\{111,1232\}$, $\{111,1322\}$, $\{111,2123\}$, $\{111,2132\}$, $\{111,2213\}$&Prop. \ref{pro-ferrers}&
$\{111,1233\}$, $\{111,2133\}$&Prop. \ref{pro-ferrers}\\\hline
$\{111,1234\}$, $\{111,1243\}$, $\{111,1432\}$, $\{111,2134\}$, $\{111,2143\}$, $\{111,3214\}$&Prop. \ref{pro-ferrers}&
$\{112,1111\}$, $\{121,1111\}$&(c) Prop. \ref{pro-ferrers}\\\hline
$\{112,1211\}$, $\{121,1112\}$&Thm. \ref{teo1}&
$\{112,2121\}$, $\{121,1122\}$&Thm. \ref{112,2121}\\\hline
$\{112,1231\}$, $\{121,1132\}$&Thm. \ref{teormulti1}&
$\{112,2122\}$, $\{121,1222\}$&Thm. \ref{112,2122}\\\hline
$\{112,2212\}$, $\{121,2122\}$&Thm. \ref{112,2212}&
$\{112,2312\}$, $\{121,1223\}$, $\{121,2213\}$&Thm. \ref{112,2312}\\\hline
$\{112,1232\}$, $\{112,2132\}$, $\{121,1322\}$&Thm. \ref{121,1322}&
$\{121,1233\}$, $\{121,2133\}$&Thm. \ref{121,1233} \\\hline
$\{112,1233\}$, $\{112,2133\}$&Thm. \ref{121,1233} &
$\{121,1234\}$, $\{121,2134\}$&Thm. \ref{121,1243}\\\hline
$\{121,1243\}$, $\{121,2143\}$&Thm. \ref{121,1243}&
$\{112,1243\}$, $\{112,2143\}$&Thm. \ref{121,1243}\\\hline
$\{121,1342\}$, $\{121,3142\}$&Conj. \ref{121,1342}&
$\{112,2314\}$, $\{112,3124\}$&Prop. \ref{pro-ferrers}\\\hline
$\{112,1234\}$, $\{112,2134\}$, $\{112,3214\}$&Prop. \ref{pro-ferrers}&
$\{122,1111\}$, $\{212,1111\}$&Prop. \ref{pro-ferrers}\\\hline
$\{123,1111\}$, $\{132,1111\}$, $\{213,1111\}$&Prop. \ref{pro-ferrers}&
$\{122,1121\}$, $\{212,1112\}$&(c) Thm. \ref{112,2122}\\\hline
$\{122,1211\}$, $\{212,1121\}$&(c) Thm. \ref{112,2212}&
$\{123,1211\}$, $\{213,1211\}$&Thm. \ref{123,1112}\\\hline
$\{123,1121\}$, $\{132,1211\}$, $\{213,1121\}$&Thm. \ref{1231121th}&
$\{123,1112\}$, $\{132,1121\}$, $\{213,1112\}$&Prop. \ref{pro-ferrers}, Thm. \ref{123,1112}\\\hline
$\{122,2121\}$, $\{212,1122\}$&(c) Thm. \ref{112,2121}&
$\{122,2212\}$, $\{212,1222\}$&(c) Thm. \ref{teo1}\\\hline
$\{212,1123\}$, $\{212,1132\}$&(c) Thm. \ref{121,1233}&
$\{122,1123\}$, $\{122,1132\}$&(c) Thm. \ref{121,1233}\\\hline
$\{122,2312\}$, $\{212,1223\}$, $\{212,1322\}$&(c) Thm. \ref{112,2312}&
$\{122,2123\}$, $\{122,2132\}$, $\{212,2213\}$&(c) Thm. \ref{121,1322}\\\hline
$\{122,3123\}$, $\{212,2133\}$&(c) Thm. \ref{teormulti1}&
$\{122,2134\}$, $\{122,2143\}$&(c) Thm. \ref{121,1243}\\\hline
$\{212,1234\}$, $\{212,1243\}$&(c) Thm. \ref{121,1243}&
$\{212,3124\}$, $\{212,3142\}$&Conj. \ref{121,1342}\\\hline
$\{212,2134\}$, $\{212,2143\}$&(c) Thm. \ref{121,1243}&
$\{122,1342\}$, $\{122,1423\}$&(c) Prop. \ref{pro-ferrers}\\\hline
$\{122,1234\}$, $\{122,1243\}$, $\{122,1432\}$&(c) Prop. \ref{pro-ferrers}&
$\{123,2212\}$, $\{132,2212\}$&(c) Thm. \ref{123,1112}\\\hline
$\{123,2122\}$, $\{132,2122\}$, $\{213,2212\}$&(c) Thm. \ref{1231121th}&
$\{123,1222\}$, $\{132,1222\}$, $\{213,2122\}$&(c) Prop. \ref{pro-ferrers}, Thm. \ref{123,1112}\\\hline
$\{132,2213\}$, $\{213,1322\}$&Thm. \ref{132,2213}&
$\{123,1322\}$, $\{132,1223\}$&Thm. \ref{123,1222}\\\hline
$\{123,2213\}$, $\{213,1223\}$&(c) Thm. \ref{123,1222}&&\\\hline
\end{tabular}
\caption{Non-trivial Wilf classes for compositions, where (c) stands for complement.}\label{tabnontrivialw}
\end{table}

Extending the arguments above (at times, treating separately the $r=1$ case) yields the following
set of generalized equivalences, some of which have already been mentioned in prior remarks.

\begin{theorem}\label{general}
 The following equivalences hold, where it is assumed $r \geq 1$ unless stated otherwise:
\begin{align*}
(i)&~\{1^{r+1},\tau\}\csim\{1^{r+1},\tau^c\},\\
(ii)&~\{112,121^{r-1}\}\csim\{121,1^r2\},\\
(iii)&~\{112,1231^{r-1}\}\csim\{121,1^r32\},\\
(iv)&~\{112,1232^{r-1}\}\csim\{112,2132^{r-1}\}\csim\{121,132^r\},\\
(v)&~\{112,123^r\}\csim\{112,213^r\},\\
(vi)&~\{112,212^r\}\csim\{121,12^{r+1}\},\\
(vii)&~\{112,2^r12\}\csim\{121,212^r\}, \qquad r \geq 2,\\
(viii)&~\{112,2312^{r-1}\}\csim\{121,12^r3\}\csim\{121,2^r13\}, \qquad r \geq 2,\\
(ix)&~\{121,123^r\}\csim\{121,213^r\},\\
(x)&~\{123,1^r2\}\csim\{132,1^{r-1}21\},\\
(xi)&~\{123,1^r21\}\csim\{213,1^r21\},\\
(xii)&~\{123,121^r\}\csim\{213,121^r\},\\
(xiii)&~\{123,132^r\}\csim\{132,12^r3\},\\
(xiv)&~\{132,2^r13\}\csim\{213,132^r\},
\end{align*}
where $\tau$ in (i) denotes a permutation of the multiset $1^r\cdots k^r$ for some $k$.
\end{theorem}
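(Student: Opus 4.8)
The plan is to observe that each of the fourteen equivalences (i)--(xiv) is the $r$-parametrized version of a single result already established in the preceding subsections for patterns of size at most four, and then to re-run the corresponding argument with the tail length $r$ carried as a free parameter. Concretely, I would match: (i) and (ii) to Theorem~\ref{teo1}; (iii) to Theorem~\ref{teormulti1}; (iv) to Theorem~\ref{121,1322}; (v) to the equivalence $\{112,1234\}\msim\{112,2134\}$ of Proposition~\ref{pro-ferrers}; (vi) to Theorem~\ref{112,2122}; (vii) to Theorem~\ref{112,2212}; (viii) to Theorem~\ref{112,2312}; (ix) to Theorem~\ref{121,1233}; (x) and (xii) to Theorem~\ref{123,1112}; (xi) to Theorem~\ref{1231121th}; (xiii) to Theorem~\ref{123,1222}; and (xiv) to Theorem~\ref{132,2213}. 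The base cases $r=2$ (or $r=3$ where a pattern such as $1^r2$ forces it, as in (ii)) recover exactly the stated theorems, while $r=1$ must occasionally be checked directly because the patterns degenerate, often to a trivially satisfied or length-three instance; this is already anticipated by the hypotheses $r\ge 2$ attached to (vii) and (viii).

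For the equivalences proved earlier by explicit bijections --- namely (i), (ii), (iii), (ix), (xiii), and the second equivalence in (viii) --- I would verify that the bijection's definition never refers to the specific multiplicities or pattern lengths. The reversal map of Theorem~\ref{teo1}, the run-rearrangement maps of Theorems~\ref{121,1233} and \ref{112,2312}, the $\rho$-vector encoding of Theorem~\ref{teormulti1}, and the colored Simion--Schmidt variant of Theorem~\ref{123,1222} are all phrased in terms of runs, left-to-right/right-to-left extrema, or rightmost ascents, none of which is sensitive to replacing a single top (or bottom) letter by a run of length $r$. In each case the task reduces to re-checking that avoidance of the longer pattern is precisely the condition preserved by the map; for the run-based maps this amounts to noting that reversing or repositioning a lower run cannot create the longer pattern exactly when it could not create the length-four one.

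For the equivalences proved by active-site analysis --- (iv), (vi), (vii), (x), (xi), (xii), the first equivalence in (viii), and (xiv) --- I would re-derive the succession rules governing how the number of active sites of a parent permutation passes to its offspring, now keeping $r$ general. The qualitative effect of enlarging the tail is uniform: avoiding a pattern whose top (resp.\ bottom) block has length $r$ forces a number of trailing copies of the inserted letter that depends only on $r$, so that an offspring built by placing the leftmost inserted letter in a non-terminal site loses all sites on the destroyed side and regains a fixed, $r$-dependent number of sites. The heart of the matter is to confirm that the two pattern sets in each equivalence still obey \emph{identical} rules; since the base-case proofs already show agreement under the length-four rules, one need only check that the $r$-dependence enters both sides in the same way, which follows from the symmetric roles of the ``left-destroying'' and ``right-destroying'' insertions isolated in the original arguments.

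The main obstacle will be (xiv), the generalization of Theorem~\ref{132,2213}. That was the only proof requiring explicit binomial-coefficient bookkeeping --- counting offspring with a prescribed number of sites via sums such as $\sum_{q=0}^{\ell-2}(\ell-q-1)\binom{b_i-2+q}{q}$ and invoking the upper-summation identity. Carrying the extra tail length through these counts alters the binomial arguments and multiplicities, and one must re-verify that the closed forms for the number of offspring with each active-site value again coincide for $\{132,2^r13\}$ and $\{213,132^r\}$; this is the step most likely to demand genuine computation rather than a transparent reuse of the earlier argument. A secondary subtlety is (v): although $123^r=12\oplus 1^r$ and $213^r=21\oplus 1^r$ give $123^r\sfsim 213^r$ immediately from Facts~\ref{fac-diag} and \ref{fac-plus}, the short pattern $112$ cannot be absorbed into a common direct-sum tail once $r\ge 2$, so I would instead establish (v) for $r\ge2$ by an active-site argument in the style of the other $\{112,\cdot\}$ theorems rather than by the Ferrers route that sufficed at length four.
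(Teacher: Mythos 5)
Your overall strategy is exactly the paper's: the paper offers no detailed proof of Theorem~\ref{general}, stating only that extending the earlier arguments (treating $r=1$ separately where needed) yields the result, and your item-by-item matching to the preceding theorems agrees with the paper's table and remarks in thirteen of the fourteen cases, as does your identification of which items are bijective extensions and which require re-deriving succession rules, and your (correct) flagging of (xiv) as the one place where the binomial-coefficient bookkeeping must genuinely be redone.

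The one genuine misstep is item (v). The pattern $123^r$ consists of $1,2$ followed by $r$ copies of $3$, so the length-four instance ($r=2$) of $\{112,123^r\}\csim\{112,213^r\}$ is $\{112,1233\}\csim\{112,2133\}$, not $\{112,1234\}\csim\{112,2134\}$; the latter belongs to a different family (increasing patterns of growing height, handled by Proposition~\ref{pro-ferrers}(g)). Consequently your claim that ``the Ferrers route sufficed at length four'' for (v) is false: the paper proves $\{112,1233\}\csim\{112,2133\}$ at the end of the proof of Theorem~\ref{121,1233}, by the same run-reversal bijection $f$ used there for $\{121,1233\}\csim\{121,2133\}$, and it cannot come from Facts~\ref{fac-diag} and~\ref{fac-plus} since $112$ is not of the form $\sigma\oplus 11$. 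The intended generalization of (v) is therefore parallel to your treatment of (ix): extend that bijection, with the largest letter occurring at least twice replaced by the largest letter that can play the role of the $r$-fold repeated symbol $3$ in $123^r$. Your fallback of inventing a fresh active-site argument for (v) is not obviously doomed, but it is unverified and unnecessary; (v) belongs on your list of ``explicit bijection'' items alongside (ix).
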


Taking the appropriate value of $r$ in each part of Theorem \ref{general} above yields the $(3,4)$ equivalence which it generalizes.

Combining the $r=1$ cases above in Theorem \ref{general}, together with the strong equivalence of $123$, $132$ and $213$ and also of $112$ and $121$,
yields the following complete list of multiset equivalences for $(3,3)$ up to symmetry:
\begin{itemize}
\item $\{111,123\}\csim\{111,132\}\csim\{111,213\}$,
\item $\{111,112\}\csim\{111,121\}$,
\item $\{112,123\}\csim\{112,213\}\csim\{121,132\}$,
\item $\{112,212\}\csim\{121,122\}$,
\item $\{121,123\}\csim\{121,213\}$. \medskip
\end{itemize}

Let $C_n(\tau,\tau')$ denote the set of all compositions of $n$ that avoid the patterns $\tau$ and $\tau'$ and let $c_n(\tau,\tau')=|C_n(\tau,\tau')|$.  Our enumeration data suggest two possible compositional equivalences that we are unable to prove by our present
methods, which we state here as a conjecture.

\begin{conjecture}\label{121,1342}
If $n \geq 1$, then
$$c_n(121,1342)=c_n(121,3142) \quad\text{and}\quad c_n(212,3124)=c_n(212,3142).$$
\end{conjecture}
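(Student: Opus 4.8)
The first thing I would exploit is a symmetry linking the two conjectured identities. Applying reversal followed by complementation sends $\{121,1342\}$ to $\{212,3124\}$ and $\{121,3142\}$ to $\{212,3142\}$: reversing gives $\{121,2431\}$ and $\{121,2413\}$, and complementing then gives $\{212,3124\}$ and $\{212,3142\}$. Since $m$-equivalence is preserved under both reversal and complementation, the single multiset equivalence $\{121,1342\}\msim\{121,3142\}$ would immediately force $\{212,3124\}\msim\{212,3142\}$, and hence both conjectured compositional identities at once. (Complementation is not a symmetry of composition-avoidance on its own, since it alters the value-sum; this is presumably why the two statements must be listed as separate conjectures, whereas the stronger multiset statement collapses them into one.) The plan is therefore to target $\{121,1342\}\msim\{121,3142\}$ directly.

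I would then set up the generating-tree framework used throughout Section~2, fixing $S=1^{b_1}\cdots k^{b_k}$ and building the $\{121,\alpha\}$-avoiders by inserting the copies of a new extreme letter into a parent avoider, for $\alpha\in\{1342,3142\}$. The two patterns place their largest entry (the ``$4$'') in the third position and their second-smallest (the ``$2$'') last, so they share the ``$4$ before $2$'' tail and differ only in the relative order of their two smallest entries ``$1$'' and ``$3$''. I would try to define active sites so that the resulting two trees obey a common succession rule, exactly as in Theorems~\ref{121,1322} and \ref{123,1112}.

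If the single-statistic succession rules fail to coincide — which is the most likely reason the authors were blocked — I would fall back on a direct bijection in the spirit of Theorems~\ref{112,2312} and \ref{121,1233}. Because $1342$ and $3142$ differ only by transposing the relative order of ``$1$'' and ``$3$'', the natural candidate is a map that locally reorders or reverses the runs lying between a left-to-right minimum and the letter that would play the role of ``$3$'', leaving the ``$4\cdots 2$'' skeleton intact. The small cases $S=1^2234$ and $S=12^234$ already exhibit $|\Av(S;121,1342)|=|\Av(S;121,3142)|$, each avoidance class containing exactly one offending word ($13422$ versus $31422$) that is interchanged by swapping an initial $1$ and $3$; this is strong evidence that an involution-like correspondence exists and, in particular, that the full multiset equivalence holds.

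The hardest part will be controlling the long-range interaction between the length-four permutation pattern and the repeated-letter pattern $121$ simultaneously. A reordering that destroys an incipient $3142$ must be shown not to create a $1342$, nor any $121$ (recall that under $121$-avoidance the equal copies of a letter must occupy an interval free of larger values), and must be reversible; unlike the length-three case, a length-four pattern pins down four well-separated positions, so the map cannot be localized to a single pair of adjacent runs, and the reversibility bookkeeping becomes delicate. Should a bijective or generating-tree proof of the multiset equivalence resist these efforts (for instance if it fails at some larger multiset, contrary to the small-case evidence), I would retreat to the weaker compositional statements and attack each through the generating function $\sum_n c_n(121,\alpha)x^n$ via the kernel method, the remaining obstacle then being to derive the two functional equations for these length-four patterns and verify that they agree.
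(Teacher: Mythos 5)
There is a genuine and, in fact, provable obstruction to your main strategy. The statement you are asked to prove is stated in the paper as a \emph{conjecture}: the authors explicitly say they are unable to prove it and have only verified it numerically for $n\le 35$. More importantly, the paper records that the number of permutations of the multiset $1^22^23^24^25^2$ avoiding $\{121,1342\}$ is $46566$, while the number avoiding $\{121,3142\}$ is $45969$. Hence the multiset equivalence $\{121,1342\}\msim\{121,3142\}$, which is the centerpiece of your plan (and which you claim the small cases give ``strong evidence'' for), is \emph{false}. Your reverse-complement symmetry computation is correct as far as it goes, and it is the same observation the authors make; but it only transports an $m$-equivalence (or a non-equivalence) between the two pairs, and since the $m$-equivalence fails, this route cannot yield the compositional identities. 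Indeed the whole point of the conjecture, as the authors note, is that these would be examples of pattern sets that are Wilf-equivalent for compositions but \emph{not} multiset equivalent, so any generating-tree or bijective argument that works multiset-by-multiset (as in Theorems~\ref{121,1322}, \ref{112,2312}, \ref{121,1233}) is doomed from the start.

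Your final fallback --- working directly with the composition generating functions $\sum_n c_n(121,\alpha)x^n$ via the kernel method --- is the only part of the proposal not ruled out, but it is left entirely unexecuted: no functional equations are derived and no comparison is made, so it does not constitute a proof. Since the paper itself contains no proof of this statement, the honest conclusion is that your proposal neither matches the paper (which proves nothing here) nor supplies a valid independent argument; its primary line of attack is demonstrably impossible given the data in the paper.
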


Note that numerically we have confirmed both equivalences in Conjecture \ref{121,1342} for all $n$ up to $n=35$.  However, the number of permutations of the multiset $1^2\cdots5^2$ that avoid $\{121,1342\}$ is seen to differ from the number of permutations that avoid $\{121,3142\}$  (46566 vs. 45969).  Taking reverse complements gives the same story for $\{212,3124\}$ and $\{212,3142\}$.  Thus, if demonstrated, the equivalences in Conjecture \ref{121,1342} would provide two examples of pattern sets that are Wilf-equivalent with respect to compositions, but are not multiset equivalent.

\section{Appendix 1}

Below we present, up to symmetry, the cardinalities of all $C_n(\tau,\tau')$ when $n=24$, where $\tau$ denotes a $3$-letter and $\tau'$ a $4$-letter pattern. Note that in a few cases, some values for $n>24$ are given which are required to differentiate a specific class from another.

{
\begin{multicols}{2}
\begin{itemize}
\item $\{111,1212\}$: 104335,
\item[$\blacksquare$] $\{111,1221\}$, $\{111,2112\}$: 104873,
\item $\{111,1122\}$: 105957,
\item $\{111,1312\}$: 84928,
\item $\{111,1231\}$: 86035,
\item $\{111,2113\}$: 86215,
\item $\{111,1213\}$: 86218,
\item[$\blacksquare$] $\{111,1123\}$, $\{111,1132\}$: 86641,
\item[$\blacksquare$] {$\{111,1223\}$, $\{111,1232\}$, $\{111,1322\}$, $\{111,2123\}$, $\{111,2132\}$, $\{111,2213\}$}: 95697,
\item $\{111,2313\}$: 109460,
\item $\{111,1332\}$: 109787,
\item $\{111,1323\}$: 109789,
\item $\{111,3123\}$: 109781,
\item[$\blacksquare$] {$\{111,1233\}$, $\{111,2133\}$}: 109932,
\item $\{111,2413\}$: 94579,146512,
\item $\{111,1423\}$: 94579,146775,
\item $\{111,1342\}$: 94690,
\item $\{111,2314\}$: 94800,
\item $\{111,3124\}$: 94816,
\item[$\blacksquare$] {$\{111,1234\}$, $\{111,1243\}$, $\{111,1432\}$, $\{111,2134\}$, $\{111,2143\}$, $\{111,3214\}$}: 94939,
\item $\{111,1324\}$: 94956,
\item[$\blacksquare$] {$\{112,1111\}$, $\{121,1111\}$}: 52825,
\item $\{112,2111\}$: 37484,
\item[$\blacksquare$] {$\{112,1211\}$, $\{121,1112\}$}: 43592,
\item $\{112,2211\}$: 46008,
\item $\{112,1221\}$: 56883,
\item[$\blacksquare$] {$\{112,2121\}$, $\{121,1122\}$}: 58994,
\item $\{121,2112\}$: 66160,
\item $\{112,3211\}$: 28809,
\item $\{112,2311\}$: 33701,
\item $\{112,2221\}$: 62378,
\item $\{112,3121\}$: 45347,
\item $\{112,1321\}$: 49080,
\item $\{121,2113\}$: 51680,
\item $\{121,1123\}$: 51887,
\item $\{112,2131\}$: 57398,
\item[$\blacksquare$] {$\{112,1231\}$, $\{121,1132\}$}: 58001,
\item $\{112,1222\}$: 71059,
\item[$\blacksquare$] {$\{112,2122\}$, $\{121,1222\}$}: 72586,
\item[$\blacksquare$] {$\{112,2212\}$, $\{121,2122\}$}: 77327,
\item $\{112,3221\}$: 45504,
\item $\{112,2321\}$: 51758,
\item $\{112,3122\}$: 60722,
\item $\{112,3212\}$: 63624,
\item $\{112,1322\}$: 63020,
\item[$\blacksquare$] {$\{112,2312\}$, $\{121,1223\}$ ,$\{121,2213\}$}: 66262,
\item $\{121,2123\}$: 71678,
\item[$\blacksquare$] {$\{112,1232\}$, $\{112,2132\}$, $\{121,1322\}$}: 70572,
\item $\{112,3321\}$: 61826,
\item $\{112,2331\}$: 67910,
\item $\{112,3231\}$: 68588,
\item $\{112,3312\}$: 75161,
\item $\{112,1332\}$: 74353,
\item $\{112,3132\}$: 74672,
\item[$\blacksquare$] {$\{121,1233\}$, $\{121,2133\}$}: 77444,
\item $\{121,3123\}$: 80111,
\item[$\blacksquare$] {$\{112,1233\}$, $\{112,2133\}$}: 79160,
\item $\{112,2313\}$: 79443,
\item $\{121,1332\}$: 79444,
\item $\{112,1323\}$: 79510,
\item $\{112,3123\}$: 79638,
\item $\{112,3213\}$: 79713,
\item $\{121,1323\}$: 80707,118366,172460,\break250527,362373,521428,
\item $\{121,2313\}$: 80707,118366,172460,\break250527,362373,521429,
\item $\{112,4321\}$: 45291,
\item $\{112,3421\}$: 50435,
\item $\{112,4231\}$: 57822,
\item $\{112,2431\}$: 59077,
\item $\{112,3241\}$: 65123,
\item $\{112,2341\}$: 65182,
\item $\{112,4312\}$: 65130,
\item $\{112,4132\}$: 67293,
\item $\{112,1432\}$: 68218,
\item $\{112,3412\}$: 67262,
\item $\{112,3142\}$: 72021,
\item $\{112,1342\}$: 72102,
\item $\{121,3124\}$: 74030,
\item $\{121,3214\}$: 74040,
\item[$\blacksquare$] {$\{121,1234\}$, $\{121,2134\}$}: 74041,
\item[$\blacksquare$] {$\{121,1243\}$, $\{121,2143\}$}: 76542,
\item $\{112,4213\}$: 75266,
\item $\{112,4123\}$: 75303,
\item $\{112,2413\}$: 75621,
\item $\{112,1423\}$: 75647,
\item[$\blacksquare$] {$\{112,1243\}$, $\{112,2143\}$}: 76193,
\item $\{121,2314\}$: 76770,
\item $\{121,1324\}$: 76780,
\item[$\blacksquare$] $\{121,1342\}$, $\{121,3142\}$: 78034,113633,\break164728,
\item $\{121,1423\}$: 78034,113633,164727,
\item $\{121,1432\}$: 78044,
\item[$\blacksquare$] $\{112,2314\}$, $\{112,3124\}$: 79157,
\item[$\blacksquare$] $\{112,1234\}$, $\{112,2134\}$, $\{112,3214\}$: 79198,115565,167895,
\item $\{112,1324\}$: 79198,115565,167896,
\item[$\blacksquare$] {$\{122,1111\}$, $\{212,1111\}$}: 138512,
\item[$\blacksquare$] {$\{123,1111\}$, $\{132,1111\}$, $\{213,1111\}$}: 83829,
\item $\{122,2111\}$: 97469,
\item $\{122,1112\}$: 145591,
\item[$\blacksquare$] {$\{122,1121\}$, $\{212,1112\}$}: 149406,
\item[$\blacksquare$] {$\{122,1211\}$, $\{212,1121\}$}: 162336,
\item $\{123,2111\}$: 45027,
\item $\{132,2111\}$: 55190,
\item $\{213,2111\}$: 90128,
\item $\{132,1112\}$: 101522,
\item[$\blacksquare$] {$\{123,1211\}$, $\{213,1211\}$}: 108281,
\item[$\blacksquare$] {$\{123,1121\}$, $\{132,1211\}$, $\{213,1121\}$}: 120344,
\item[$\blacksquare$] {$\{123,1112\}$, $\{132,1121\}$, $\{213,1112\}$}: 133919,
\item $\{122,2211\}$: 396868,
\item $\{122,2112\}$: 444097,
\item[$\blacksquare$] {$\{122,2121\}$, $\{212,1122\}$}: 445294,
\item $\{212,1221\}$: 448694,
\item $\{122,2221\}$: 593122,
\item[$\blacksquare$] $\{122,2212\}$, $\{212,1222\}$: 607805,
\item $\{122,2311\}$: 207967,
\item $\{122,3211\}$: 217656,
\item $\{122,2131\}$: 217373,
\item $\{122,2113\}$: 218288,
\item $\{122,3112\}$: 249010,
\item $\{122,3121\}$: 251890,
\item[$\blacksquare$] {$\{212,1123\}$, $\{212,1132\}$}: 286877,
\item $\{212,1231\}$: 306089,
\item $\{212,2113\}$: 318167,
\item $\{122,1312\}$: 318616,
\item $\{122,1231\}$: 322179,
\item[$\blacksquare$] {$\{122,1123\}$, $\{122,1132\}$}: 321098,
\item $\{122,1213\}$: 326103,
\item $\{122,1321\}$: 331513,
\item $\{212,1213\}$: 326937,505465,776447,
\item $\{212,1312\}$: 326937,505465,776446,
\item $\{122,2231\}$: 472765,
\item $\{122,2213\}$: 481694,
\item $\{122,2321\}$: 506314,
\item[$\blacksquare$] {$\{122,2312\}$, $\{212,1223\}$, $\{212,1322\}$}: 518302,
\item[$\blacksquare$] {$\{122,2123\}$, $\{122,2132\}$, $\{212,2213\}$}: 542447,
\item $\{122,3221\}$: 569643,
\item $\{122,3212\}$: 586797,
\item $\{212,1232\}$: 628352,
\item $\{122,3321\}$: 639459,
\item $\{122,3231\}$: 636726,
\item $\{122,3213\}$: 642427,
\item $\{122,3312\}$: 645365,
\item $\{212,1332\}$: 658101,
\item $\{212,1233\}$: 658147,
\item $\{122,3132\}$: 668842,
\item[$\blacksquare$] {$\{122,3123\}$, $\{212,2133\}$}: 669649,
\item $\{122,3241\}$: 502005,
\item $\{122,2431\}$: 511743,
\item $\{122,2341\}$: 512408,
\item $\{122,3214\}$: 510131,
\item $\{122,2413\}$: 516402,
\item $\{122,2314\}$: 516912,
\item[$\blacksquare$] $\{122,2134\}$, $\{122,2143\}$: 529470,
\item $\{122,3421\}$: 524533,
\item $\{122,4231\}$: 525914,
\item $\{122,4321\}$: 531152,
\item $\{122,4213\}$: 531833,
\item $\{122,3412\}$: 531977,
\item $\{122,4312\}$: 539208,
\item $\{212,1423\}$: 562177,
\item $\{212,1324\}$: 562233,
\item $\{212,1342\}$: 565100,
\item[$\blacksquare$] $\{212,1234\}$, $\{212,1243\}$: 565155,904528,
\item $\{212,1432\}$: 565155,904529,
\item $\{122,3142\}$: 550948,
\item $\{122,3124\}$: 552176,
\item $\{122,4132\}$: 565675,
\item $\{122,4123\}$: 565838,
\item $\{212,2314\}$: 583686,
\item[$\blacksquare$] $\{212,2413\}$, $\{212,3124\}$: 583691,
\item $\{212,3214\}$: 583743,
\item[$\blacksquare$] $\{212,2134\}$, $\{212,2143\}$: 595438,
\item[$\blacksquare$] $\{122,1342\}$, $\{122,1423\}$: 601322,
\item[$\blacksquare$] $\{122,1234\}$, $\{122,1243\}$, $\{122,1432\}$: 602740,
\item $\{122,1324\}$: 602746,
\item $\{132,2211\}$: 157727,
\item $\{213,1122\}$: 203230,
\item $\{123,2211\}$: 186726,
\item $\{213,1221\}$: 214850,
\item $\{132,2121\}$: 228554,
\item $\{123,2112\}$: 262501,
\item $\{213,2211\}$: 224583,
\item $\{123,2121\}$: 261305,
\item $\{132,2112\}$: 275288,
\item $\{123,1122\}$: 273983,
\item $\{213,1212\}$: 279782,
\item $\{132,1221\}$: 288149,
\item $\{132,1122\}$: 312538,
\item $\{123,1221\}$: 302929,
\item $\{213,2121\}$: 305159,
\item $\{132,1212\}$: 338776,
\item $\{213,2112\}$: 342635,
\item $\{123,1212\}$: 351544,
\item $\{123,3211\}$: 195977,
\item $\{213,3112\}$: 242902,
\item $\{123,3112\}$: 212143,
\item $\{213,3121\}$: 246356,
\item $\{123,3121\}$: 238941,
\item $\{213,3211\}$: 234061,
\item $\{132,2311\}$: 449163,
\item $\{132,2113\}$: 354774,
\item $\{213,1312\}$: 391242,
\item $\{213,1321\}$: 380778,
\item $\{132,1213\}$: 399970,
\item $\{123,1312\}$: 410195,
\item $\{132,1123\}$: 414782,
\item $\{123,1321\}$: 415510,
\item $\{132,2131\}$: 494818,
\item $\{213,1132\}$: 503790,
\item $\{123,1132\}$: 529612,
\item $\{132,1231\}$: 533009,
\item $\{132,2221\}$: 339146,
\item $\{213,1222\}$: 358327,
\item $\{123,2221\}$: 378373,
\item $\{213,2221\}$: 390069,
\item[$\blacksquare$] {$\{123,2212\}$, $\{132,2212\}$}: 418354,
\item[$\blacksquare$] {$\{123,2122\}$, $\{132,2122\}$, $\{213,2212\}$}: 440827,
\item[$\blacksquare$] {$\{123,1222\}$, $\{132,1222\}$, $\{213,2122\}$}: 485327,
\item $\{132,3211\}$: 521377,
\item $\{123,2311\}$: 524744,
\item $\{132,3121\}$: 570656,
\item $\{213,2311\}$: 599187,
\item $\{132,3112\}$: 601383,
\item $\{123,2131\}$: 641291,
\item $\{213,1231\}$: 671515,
\item $\{123,2113\}$: 744984,
\item $\{213,1123\}$: 746159,
\item $\{213,3122\}$: 342362,
\item $\{123,3221\}$: 343971,
\item $\{213,3221\}$: 340675,
\item $\{123,3212\}$: 382554,
\item $\{123,3122\}$: 381825,
\item $\{213,3212\}$: 394852,
\item $\{132,2231\}$: 526791,
\item[$\blacksquare$] {$\{132,2213\}$, $\{213,1322\}$}: 458220,
\item $\{132,2123\}$: 506438,
\item[$\blacksquare$] {$\{123,1322\}$, $\{132,1223\}$}: 533568,
\item $\{132,3221\}$: 603342,
\item $\{123,2321\}$: 686218,
\item $\{132,3212\}$: 653346,
\item $\{123,2312\}$: 694424,
\item $\{213,2321\}$: 697009,
\item $\{132,3122\}$: 681004,
\item $\{213,2312\}$: 732527,
\item $\{213,1232\}$: 753461,
\item $\{123,2132\}$: 771116,
\item $\{123,2231\}$: 854373,
\item $\{132,2321\}$: 859462,
\item $\{213,2231\}$: 875649,
\item $\{132,2312\}$: 881784,
\item[$\blacksquare$] {$\{123,2213\}$, $\{213,1223\}$}: 902785,
\item $\{213,3312\}$: 708649,
\item $\{123,3321\}$: 718470,
\item $\{123,3312\}$: 714199,
\item $\{213,3321\}$: 713750,
\item $\{132,2331\}$: 815573,
\item $\{213,1332\}$: 759017,
\item $\{123,3132\}$: 801324,
\item $\{132,2313\}$: 850475,
\item $\{213,3132\}$: 835356,
\item $\{132,2133\}$: 836368,
\item $\{123,1332\}$: 844730,
\item $\{132,1233\}$: 852658,
\item $\{123,2331\}$: 881288,
\item $\{213,3231\}$: 901887,
\item $\{132,3231\}$: 896638,
\item $\{132,3213\}$: 903969,
\item $\{132,3123\}$: 915698,
\item $\{213,1323\}$: 920937,
\item $\{123,2313\}$: 930903,
\item $\{123,2133\}$: 935140,
\item $\{213,3123\}$: 954148,
\item $\{132,3321\}$: 940340,
\item $\{123,3231\}$: 954767,
\item $\{132,3312\}$: 955697,
\item $\{213,2331\}$: 959257,
\item $\{213,1233\}$: 972460,
\item $\{123,3213\}$: 979479,
\item $\{213,4312\}$: 694835,
\item $\{123,4321\}$: 705999,
\item $\{123,4312\}$: 702556,
\item $\{213,4321\}$: 700152,
\item $\{132,2341\}$: 850077,
\item $\{213,1432\}$: 787114,
\item $\{123,4132\}$: 779792,
\item $\{132,2314\}$: 833222,
\item $\{213,4132\}$: 795511,
\item $\{132,2134\}$: 834511,
\item $\{132,1234\}$: 851461,
\item $\{123,1432\}$: 857921,
\item $\{213,4231\}$: 873579,
\item $\{123,2431\}$: 882598,
\item $\{132,3241\}$: 922782,
\item $\{213,3412\}$: 902041,
\item $\{123,3421\}$: 901200,
\item $\{132,3214\}$: 897047,
\item $\{123,3412\}$: 900841,
\item $\{213,3421\}$: 906726,
\item $\{213,1423\}$: 918238,
\item $\{132,3124\}$: 915830,
\item $\{123,2413\}$: 922421,
\item $\{132,4231\}$: 930808,
\item $\{213,4123\}$: 922049,
\item $\{132,4213\}$: 936394,
\item $\{213,1342\}$: 944445,
\item $\{132,4123\}$: 946103,
\item $\{123,3142\}$: 958210,
\item $\{123,2143\}$: 958787,
\item $\{132,3421\}$: 960869,
\item $\{123,4231\}$: 943423,
\item $\{132,3412\}$: 970430,
\item $\{213,2431\}$: 982832,
\item $\{132,4321\}$: 965760,
\item $\{123,4213\}$: 967564,
\item $\{123,3241\}$: 980259,
\item $\{132,4312\}$: 978603,
\item $\{213,1243\}$: 1002198,
\item $\{213,2341\}$: 1004697,
\item $\{123,3214\}$: 1012078,
\item $\{213,1234\}$: 1019326.
\end{itemize}
\end{multicols}}

\section{Appendix 2}

Below we present, up to symmetry, the cardinalities of all $C_n(\tau,\tau')$ when $n=25$, where $\tau$ and $\tau'$ denote patterns of length three.

\begin{multicols}{2}
\begin{itemize}
\item $\{112,321\}$:	9421,
\item $\{112,231\}$:	17355,
\item $\{112,211\}$:	17481,
\item $\{112,121\}$:	26731,
\item $\{112,312\}$:	32058,
\item[$\blacksquare$] $\{111,123\}$, $\{111,132\}$, $\{111,213\}$:	34552,
\item $\{112,132\}$:	38507,
\item $\{112,221\}$:	40883,
\item[$\blacksquare$] $\{111,112\}$, $\{111,121\}$:	44452,
\item[$\blacksquare$] $\{121,123\}$, $\{121,213\}$:	45077,
\item[$\blacksquare$] $\{112,123\}$, $\{112,213\}$, $\{121,132\}$:	57375,
\item $\{112,122\}$:	57556,
\item[$\blacksquare$] $\{112,212\}$, $\{121,122\}$:	62204,
\item $\{121,212\}$:	78113,
\item[$\blacksquare$] $\{111,122\}$, $\{111,212\}$:	79333,
\item $\{122,231\}$:	128143,
\item $\{122,213\}$:	145646,
\item $\{122,321\}$:	147169,
\item $\{122,312\}$:	170326,
\item $\{123,321\}$:	190404,
\item $\{213,312\}$:	205612.
\item $\{123,312\}$:	206502,
\item[$\blacksquare$] $\{123,212\}$, $\{132,212\}$:	219024,
\item[$\blacksquare$] $\{122,123\}$, $\{122,132\}$, $\{212,213\}$:	272449,
\item $\{132,213\}$:	459083,
\item $\{132,231\}$:	498583,
\item $\{123,132\}$:	500733,
\item $\{122,221\}$:	529581,
\item $\{122,212\}$:	576244,
\item $\{123,231\}$:	801464,
\item $\{132,312\}$:	866953,
\item $\{123,213\}$:	1005036,
\end{itemize} 	
\end{multicols}

\end{document}